\newtheorem{lemma}{Lemma}[section]
\newtheorem{theorem}{Theorem}[section]
\newtheorem{problem}{Problem}[section]
\newtheorem{example}{Example}[section]
\newtheorem{construction}{Construction}[section]
\newtheorem{claim}{Claim}[section]
\newtheorem{remark}{Remark}[section]
\newtheorem{conjecture}{Conjecture}[section]
\begin{document}

\title{On supersaturation for oddtown and eventown}

\author{Xin~Wei, Yuhao~Zhao, Xiande~Zhang~and~Gennian~Ge
\thanks{\emph{2020 Mathematics Subject Classifications}: 05D05.}
\thanks{This project was supported by the National Key Research and Development Program of China under Grant 2020YFA0712100,  Grant 2018YFA0704703 and Grant 2020YFA0713100, the National Natural Science Foundation of China under Grant 11971325, Grant 12171452 and Grant 12231014,  and Beijing Scholars Program.}
 \thanks{X. Wei ({\tt weixinma@mail.ustc.edu.cn}) and Y. Zhao ({\tt zhaoyh21@mail.ustc.edu.cn}) are with the School of Mathematical Sciences, University of Science and Technology of China, Hefei, 230026, Anhui, China.}
 \thanks{X. Zhang ({\tt drzhangx@ustc.edu.cn}) is with the School of Mathematical Sciences, University of Science and Technology of China, Hefei, 230026, Anhui, China, and  Hefei National Laboratory, Hefei, 230088, China.}
 \thanks{G. Ge ({\tt gnge@zju.edu.cn}) is with the School of Mathematical Sciences, Capital Normal University, Beijing, 100048, China.
}
%
%
%\thanks{X. Zhang ({\tt drzhangx@ustc.edu.cn}) is with School of Mathematical Sciences, University of Science and Technology of China, Hefei, 230026, Anhui, China.  The research of X. Zhang is supported by the NSFC
%under Grant Nos. 12171452 and the National Key Research and Development Program of China (2020YFA0713100).}
%
%\thanks{G. Ge({\tt gnge@zju.edu.cn}) is with the School of Mathematical Sciences, Capital Normal University, Beijing, 100048, China. The research of G. Ge was supported by the National Key Research and Development Program of China under Grant 2020YFA0712100 and Grant 2018YFA0704703, the National Natural Science Foundation of China under Grant 11971325  and Grant 12231014,  and Beijing Scholars Program.
%}

}
\maketitle

\begin{abstract}
  We study the supersaturation problems of oddtown and eventown. Given a family $\mathcal A$ of subsets of an $n$ element set, let $op(\mathcal A)$ denote the number of distinct pairs $A,B\in \mathcal A$ for which $|A \cap B|$ is odd. We show that if $\mathcal A$ consists of $n+s$ odd-sized subsets, then $op(\mathcal A)\geq s+2$, which is tight when $s\le n-4$. This disproves a conjecture by O'Neill on the supersaturation problem of oddtown.  For the supersaturation problem of eventown, we show that for large enough $n$, if $\mathcal A$ consists of $2^{\lfloor n\slash 2\rfloor}+s$ even-sized subsets, then $op(\mathcal A)\ge s\cdot2^{\lfloor n\slash 2\rfloor-1} $ for any positive integer $s\le 2^{\lfloor\frac n 8\rfloor}\slash n$. This partially proves a conjecture by O'Neill on the supersaturation problem of eventown. Previously, the correctness of this conjecture was only verified for $s=1$ and $2$. We further provide a twice weaker lower bound in this conjecture for eventown, that is  $op(\mathcal{A})\ge s\cdot 2^{\lfloor n/2\rfloor-2}$ for general $n$ and $s$ by using discrete Fourier analysis. Finally, some asymptotic results for the lower bounds of $op(\mathcal A)$ are given when $s$ is large for both problems.

  %Using discrete Fourier analysis, we show that if $\left|\mathcal{A}\right|\ge 2^{\lfloor n/2\rfloor}+s$ for general $s$, then $op(\mathcal{A})\ge s\cdot 2^{\lfloor n/2\rfloor-2}$, which gives a twice weaker lower bound conjectured by O'Neill. Previously, this result is only known for even $n$ by spectral methods.
%
%   on the supersaturation problems of both oddtown and eventown.
%
%  Note that the above results are focused on the cases when the exceeding size $s$ is smaller than the corresponding extremal sizes of $|\mathcal A|$, which is $n$ for oddtown and $2^{\lfloor n\slash 2\rfloor}$ for eventown. When $s$ grows larger than the corresponding extremal size, things becomes different. Some asymptotic results for the lower bounds of $op(\mathcal A)$ are also given on the supersaturation problems of both oddtown and eventown when $s$ is large.

\end{abstract}

\begin{IEEEkeywords}
\boldmath oddtown, eventown, supersaturation, intersecting set families.
\end{IEEEkeywords}

\section{Introduction}

In extremal set theory, given a finite family (i.e., a collection of subsets) $\mathcal F$ and a restriction on the intersection of two subsets, the {\it restricted intersection problem} asks for the maximum size of a subfamily $\mathcal A\subset\mathcal F$ such that any two different members of $\mathcal A$ satisfy the restricted  intersection. Many fundamental and classical results in extremal combinatorics can be stated as a restricted intersection problem. Let $[n]:=\{1, 2, \ldots, n\}$, $2^{[n]}$ denote the collection of all subsets of $[n]$, and $\binom{[n]}{k}$ denote the collection of all $k$-subsets of $[n]$. Then the celebrated Erd\H{o}s-Ko-Rado theorem on intersecting families~\cite{erdos1961intersection} can be viewed as a solution to the restricted intersection problem with $\mathcal F=\binom{[n]}{k}$ for $n\ge 2k$ when restricting empty pairwise intersections. As another example, Sperner's theorem on antichains~\cite{sperner1928satz} states that the maximum size of a subfamily $\mathcal A$ of $\mathcal F=2^{[n]}$ with restricted intersection $A\backslash B {=A\cap B^{c}} \neq \emptyset$ for any $A, B\in\mathcal A$ is $\binom n {\lfloor\frac n 2\rfloor}$. There are several other well-studied restricted intersection problems, such as $L$-intersecting families and bounded symmetric differences. For more information, one can refer to~\cite{fisher1940174, frankl2017stability, gerbner2018extremal, snevily1995generalization, kleitman1966combinatorial, gao2022stability}.

 %states that when $n\ge 2k$, every $k$-uniform intersecting family as subfamily of $2^{[n]}$ has size at most $\binom {n-1}{k-1}$. We can view Erd\H{o}s-Ko-Rado theorem as a restricted intersecting problem by setting $\mathcal F$ as the subset of all $k$-sized elements of $2^{[n]}$  and the restriction as forbidding the empty intersection. Another example is the Sperner's Theorem~\cite{sperner1928satz} from 1928, which states that a subfamily $\mathcal F'$ of $2^{[n]}$ with intersecting restriction that $A\backslash B\neq \emptyset$ for any different $A, B\in\mathcal F'$ must satisfy $|\mathcal F'|\le\binom n {\lfloor\frac n 2\rfloor}$. There are some other well-studied restricted intersecting problems, such as $L$-intersecting families and bounded symmetric differences. For more information, one can refer to~\cite{fisher1940174, frankl2017stability, gerbner2018extremal, snevily1995generalization, kleitman1966combinatorial, gao2022stability}.

In this paper, we focus on the \emph{oddtown} and \emph{eventown} problems, which are also restricted intersection problems. Both of them share the same restriction that  intersections of every two different members have even size. The difference is that oddtown requires the family $\mathcal F$ to consist of all odd-sized subsets of ${[n]}$, while eventown  requires  all even-sized subsets.
%All of the examples given above apply $\mathcal F=2^{[n]}$. Nevertheless, the choose of $\mathcal F$ can be more special. In a restricted intersecting family, we can choose $\mathcal F$ as the subfamily of all even-sized (resp., odd-sized) elements in $2^{[n]}$. If further we set the intersecting restriction as that every intersection of two different sets has even size, then we are asking for the famous eventown (resp., oddtown) problem.
Formally, let $\mathcal A=\{A_1, A_2, \ldots, A_m\}$ be a family of subsets of $[n]$. We say $\mathcal A$ is an \emph{oddtown} (resp. \emph{eventown}) \emph{family} if all its sets have odd (resp. even) size and $$|A_i\cap A_j|\text{ is even for }1\le i<j\le m.$$ The maximum size of an oddtown family is $n$, and the maximum size of an eventown family is $2^{\lfloor\frac n 2\rfloor}$, which were determined by  Berlekamp~\cite{berlekamp1969subsets} and Graver~\cite{graver1975boolean} independently. Their methods highlighted the linear algebra method~\cite{babai1988linear} in extremal combinatorics. Numerous extensions and variants of the oddtown and eventown problems
can be found in the literature \cite{deza1983functions, o2022note, vu1999extremal, szabo2005exact,FRANKL1983215, vu1997extremal, sudakov2018two}, such as extending modulo $2$ to modulo general $\ell$, which is known as  $\ell$-even\slash oddtown problem~\cite{babai1988linear, FRANKL1983215}, and extending  pairwise restricted intersections to $k$-wise restricted intersections \cite{vu1997extremal, sudakov2018two}.

%\begin{problem}
%What are the sizes of the largest oddtown and eventown as the subset of $2^{[n]}$?
%\end{problem}

%The oddtown and eventown problems are solved by Berlekamp~\cite{berlekamp1969subsets} and Graver~\cite{graver1975boolean}, which are fundamental results highlighting the linear algebra method~\cite{babai1988linear} in extremal combinatorics. Berlekamp~\cite{berlekamp1969subsets} and Graver~\cite{graver1975boolean} showed the following.
%\begin{theorem}\label{thm_oddeventown}
%Let $n$ be a positive integer and $\mathcal A$ be a family of subsets of $[n]$. If $\mathcal A$ is an oddtown, then $\mathcal A\le n$. If $\mathcal A$ is an eventown, then $\mathcal A\le 2^{\lfloor\frac n 2\rfloor}$.
%\end{theorem}
Recently in \cite{o2021towards}, O'Neill initiated the study of supersaturation problem for oddtown and eventown:
if  $\mathcal A\subset 2^{[n]}$ is a family of more than $n$ odd-sized subsets, or a family of  more than  $2^{\lfloor\frac n 2\rfloor}$ even-sized subsets, how many pairs of members in $\mathcal A$  must
violate the intersecting restriction, that is, have an odd number of elements in common?
 Supersaturation versions  of other foundational
problems in extremal set theory have also  attracted a lot of attention recently.  For example,  works like~\cite{balogh2018kleitman, balogh2018structure, leader2003set, das2015sperner, das2016minimum}   gave the supersaturation versions for  Erd\H{o}s-Ko-Rado theorem and  Sperner's theorem.

%In extremal combinatorics, given an extremal result like Theorem~\ref{thm_oddeventown}, it is common to ask for the supersaturation version of result, i.e., if the size of $\mathcal A$ exceeds the extremal size slightly, what will happen for the number of pairs violating the intersecting restriction. In may extremal problems, one can do the same thing.
%As for in the regime of extremal set theory, some works like~\cite{balogh2018kleitman, balogh2018structure, leader2003set, das2015sperner, das2016minimum}   also give the supersaturation version for the Erd\H{o}s-Ko-Rado theorem and the Sperner's theorem.
%
%The study on supersaturation problem for oddtown and eventown, as far as we know, starts quite recently, see~\cite{o2021towards, antipov2022lovasz}.

For a given set family $\mathcal A$, the {\it odd pair number}, denoted by $op(\mathcal A)$, is the number of  pairs of distinct members $A, B\in \mathcal A$ such that $|A\cap B|$ is odd. In \cite{o2021towards}, O'Neill constructed a family $\mathcal A=\{\{i\}\}_{i\in [n]}\cup \mathcal C_s$, where $\mathcal C_s$ consists of exactly $s$ members from the extremal oddtown family:  a collection of vertex disjoint $K_4^{(3)}$
- i.e., all triples on four vertices. It is easy to check that $\mathcal A$ is a family of $n+s$  odd-sized subsets,  and $op(\mathcal A)=3s$. O'Neill proved that this is the best possible result for $s=1$:
\begin{theorem}[\hspace{-0.01em}\cite{o2021towards}]\label{thm_old_oddtown}
Let $n\ge 1$ and $\mathcal A\subset 2^{[n]}$ consists of odd-sized subsets with $|\mathcal A|\ge n+1$. Then $op(\mathcal A)\ge 3$.
\end{theorem}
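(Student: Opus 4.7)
The plan is to argue by the linear-algebra method over $\mathbb{F}_2$. By discarding extra sets one may assume $|\mathcal{A}|=n+1$; identify each $A_i$ with its indicator vector $v_i\in\mathbb{F}_2^{n}$, and let $G$ be the graph on $[n+1]$ whose edges are the odd pairs, so $op(\mathcal{A})=e(G)$ and the goal is to show $e(G)\ge 3$. The engine of the proof is a parity lemma: because $v_1,\ldots,v_{n+1}$ lie in an $n$-dimensional space, the $\mathbb{F}_2$-subspace $K=\{x\in\mathbb{F}_2^{n+1}:\sum_i x_iv_i=0\}$ is non-zero; and for any non-zero $x\in K$ with support $S$, taking the dot product of $\sum_{i\in S}v_i=0$ with $v_j$ for $j\in S$ yields $|A_j|+\sum_{i\in S\setminus\{j\}}|A_i\cap A_j|\equiv 0\pmod{2}$. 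Since $|A_j|$ is odd, every vertex of $S$ has odd degree in the induced subgraph $G[S]$.

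Assuming $e(G)\le 2$ for a contradiction, I would then enumerate the possible edge patterns. If $e(G)\in\{0,1\}$ the only admissible support is the single edge itself, which forces $v_i=v_j$ and hence $A_i=A_j$, contradicting the assumption that $\mathcal{A}$ consists of distinct sets. If $e(G)=2$ with the two edges sharing a vertex, say $\{1,2\}$ and $\{1,3\}$, the only admissible supports are $\{1,2\}$ and $\{1,3\}$; if both lay in $K$, closure under addition would give $e_2+e_3\in K$, but $\{2,3\}$ induces no edges, which is impossible. Hence $\dim K=1$, and again two of the $A_i$ coincide.

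The main obstacle is the case $e(G)=2$ with disjoint edges, say $\{1,2\}$ and $\{3,4\}$: now all three of $\{1,2\}$, $\{3,4\}$, and $\{1,2,3,4\}$ are admissible supports. If $K$ contains $e_1+e_2$ or $e_3+e_4$ we again recover $A_i=A_j$, so the remaining scenario is $K=\langle e_1+e_2+e_3+e_4\rangle$, which means $v_4=v_1+v_2+v_3$ and $\operatorname{rank}_{\mathbb{F}_2}V=n$; in particular $\{v_1,v_2,v_3,v_5,\ldots,v_{n+1}\}$ is a basis of $\mathbb{F}_2^{n}$. To close this case I set $u:=v_1+v_2\ne 0$ and compute, using $v_i\cdot v_i=1$, $v_1\cdot v_2=1$, and $v_k\cdot v_\ell=0$ for every non-edge $\{k,\ell\}$, that $u\cdot v_i=0$ for all $i\in[n+1]$. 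Hence $u$ is orthogonal to the whole basis and therefore to all of $\mathbb{F}_2^{n}$; non-degeneracy of the standard dot product forces $u=0$, i.e.\ $A_1=A_2$, which is the desired contradiction.
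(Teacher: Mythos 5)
Your proof is correct. Note that the paper does not prove this statement itself: Theorem~\ref{thm_old_oddtown} is quoted from O'Neill~\cite{o2021towards} and used only as the base case of the induction in Lemma~\ref{lemma_oddtown_lower_bound}, so there is no internal proof to compare against; your argument is in the same linear-algebra spirit as the oddtown literature. The two load-bearing steps both check out: (i) the parity lemma that for any nonzero dependency $\sum_{i\in S}v_i=0$ every vertex of $S$ has odd degree in $G[S]$ (dotting with $v_j$ and using $v_j\cdot v_j=1$), which over $\mathbb F_2$ pins down the possible supports since a support determines its vector uniquely; and (ii) the resolution of the genuinely nontrivial case of two disjoint edges, where $K=\langle e_1+e_2+e_3+e_4\rangle$ gives rank $n$, and $u=v_1+v_2$ is orthogonal to all of $v_1,\dots,v_{n+1}$, hence to a spanning set, forcing $u=0$ and $A_1=A_2$ by non-degeneracy of the bilinear form. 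One harmless imprecision: for $e(G)=0$ there is no admissible support at all, so the contradiction is simply $K=\{0\}$ against $\dim K\ge 1$, rather than ``the only admissible support is the single edge''; the $e(G)=1$ and shared-vertex cases are handled correctly as you wrote them.
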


O'Neill further conjectured that:
\begin{conjecture}[\hspace{-0.01em}\cite{o2021towards}]\label{conj_odd_town}
Let $n\ge 1$ and fix $1\le s\le n$. If $\mathcal A\subset 2^{[n]}$ is a family of odd-sized subsets with $|\mathcal A|\ge n+s$, then $op(\mathcal A)\ge 3s$.
\end{conjecture}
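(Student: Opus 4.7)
My natural approach would combine the standard $\mathbb{F}_2$ linear-algebra framework with induction on $s$. Let $v_1,\ldots,v_{n+s}\in\mathbb{F}_2^n$ denote the characteristic vectors of the members of $\mathcal A$. The oddtown hypothesis becomes $\langle v_i,v_i\rangle = 1$ for each $i$, while the target quantity $op(\mathcal A)$ equals the number of unordered pairs $\{i,j\}$ with $\langle v_i,v_j\rangle = 1$. Since $n+s$ vectors lie in an $n$-dimensional space, the space of $\mathbb{F}_2$-linear dependencies $\mathcal D = \{y\in\mathbb{F}_2^{n+s}:\sum_i y_i v_i = 0\}$ has dimension at least $s$.

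For the base case $s=1$, Theorem~\ref{thm_old_oddtown} already supplies $op(\mathcal A)\ge 3$. For the inductive step, I would try to locate a single member $A_k$ participating in at least three odd-intersecting pairs, remove it, and apply the inductive hypothesis to the remaining family of size $n+s-1\ge n+(s-1)$, giving $op(\mathcal A)\ge 3 + 3(s-1) = 3s$. To find such an $A_k$, I would analyze the \emph{odd-intersection graph} $H$ on vertex set $\mathcal A$ whose edges are exactly the odd-intersecting pairs. Any dependency $y\in\mathcal D$ with support $T$ gives, via the computation $\langle v_j,\sum_{i\in T} v_i\rangle = 0$ for $j\in T$, that the induced subgraph $H[T]$ has all odd degrees. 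Combined with $|T|\ge 4$ (since the $v_i$ are pairwise distinct and $|T|$ must be even by summing $\langle v_i,v_i\rangle$), this immediately yields at least two edges from each dependency, and one then hopes to upgrade this to three via a closer structural inspection or by simultaneously exploiting an $s$-dimensional family of such dependencies.

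The crux, and the step I expect to be the main obstacle, is precisely the gap between the easy linear-algebraic lower bound of roughly two edges per dependency and the desired bound of three. An all-odd-degree graph on $|T|=4$ vertices can be a perfect matching with only two edges, and one can explicitly exhibit four odd-sized sets (for instance $v_1=e_1{+}e_2{+}e_3$, $v_2=e_1{+}e_4{+}e_5$, $v_3=e_2{+}e_3{+}e_6$, $v_4=e_4{+}e_5{+}e_6$) whose pairwise intersection parities realize this matching exactly. Hence the inductive step does not go through from dimension-counting alone: guaranteeing that each dependency contributes three fresh edges, or equivalently that $H$ has minimum degree at least three, seems to demand further combinatorial input beyond parity and distinctness. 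If that input is unavailable, this would suggest that the true extremal bound is strictly smaller than $3s$, and that the right move is to hunt for a sharper construction than the $K_4^{(3)}$ example, perhaps by replacing the singletons $\{\{i\}\}_{i\in[n]}$ with a more cleverly chosen oddtown base whose members already share many odd intersections with a small number of added sets.
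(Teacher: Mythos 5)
Your proposal cannot be completed, because the statement you are asked to prove is false: this is exactly the conjecture that the paper refutes. Theorem~\ref{thm_for_oddtown} shows that for $1\le s\le n-4$ the tight lower bound is $s+2$, not $3s$, and Construction~\ref{cons_A_s} (the six triples $\{1,x,y\}$ with $x\ne y\in[2,5]$, together with the $s-1$ pairs $\{i+5\}$, $\{2,3,4,5,i+5\}$, padded with singletons when $n>s+4$) is an odd-sized family of size $n+s$ with $op(\mathcal A)=s+2<3s$ for every $s\ge 2$. The obstruction you isolated is precisely the mechanism of this counterexample: the odd pair graph of $\mathcal A_s$ is a perfect matching on $2s+4$ vertices (the padding singletons are isolated), so no member lies in more than one odd pair, and your inductive step --- find a member in at least three odd-intersecting pairs, delete it, recurse --- is impossible in general. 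Each $\mathbb F_2$-dependency does force an induced subgraph with all degrees odd on an even support of size at least $4$, but, as your explicit four-set example shows, this only guarantees a matching, and that is the structure the extremal family realizes globally; the true extremal count is $s+2$, not $3s$.

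Your closing paragraph is the correct diagnosis, and the paper follows essentially the route you sketch there: instead of the singleton base plus $K_4^{(3)}$ blocks, it builds a small center whose odd pair graph is already a perfect matching and attaches further matched pairs (an alternative counterexample, Construction~\ref{cons_A_m_I}, takes a product of an eventown family with singletons). For the matching lower bound the paper needs no minimum-degree-three statement: it inducts from the $s=1$ case (Theorem~\ref{thm_old_oddtown}, the same base case you use) by deleting one non-isolated vertex of the odd pair graph at each step, which removes at least one odd pair per step and yields $op(\mathcal A)\ge s+2$ (Lemma~\ref{lemma_oddtown_lower_bound}). In other words, only the degree-$\ge 1$ version of your removal argument survives, and it matches the construction exactly; so the correct way to ``review'' this statement is to replace the claimed bound $3s$ by $s+2$ and pair the weakened induction with the counterexample above.
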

%Moreover, they proved the case when $s=1$, that is,
%\begin{theorem}[\hspace{-0.01em}\cite{o2021towards}]\label{thm_old_oddtown}
%Let $n\ge 1$ and $\mathcal A\subset 2^{[n]}$ consist of odd-sized subsets with $|\mathcal A|\ge n+1$. Then $op(\mathcal A)\ge 3$.
%\end{theorem}

Our first main result is to show that Conjecture~\ref{conj_odd_town} is not true. In fact, when $n\ge s+4$, we can construct an odd-sized family of size $n+s$ but with odd pair number $s+2$, which is much smaller than the lower bound in Conjecture~\ref{conj_odd_town}. We further show that $s+2$ is best possible for any $s$ and $n\geq s+4$. The statement is summarized below. For brevity, we use the term ``odd-sized (resp. even-sized) family'' to present a family consisting of odd-sized (resp. even-sized) subsets.
\begin{theorem}\label{thm_for_oddtown}
Let $n\ge 1$ and fix $1\le s\le n-4$.  Any odd-sized family $\mathcal A\subset 2^{[n]}$ with $|\mathcal A|\ge n+s$ must satisfy $op(\mathcal A)\ge s+2$, and the lower bound is tight.
\end{theorem}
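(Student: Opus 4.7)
The plan is to prove the lower bound by a short induction on $s$, with Theorem~\ref{thm_old_oddtown} as the base case, and then to exhibit an explicit family achieving equality.

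For the lower bound, we may assume $|\mathcal A|=n+s$ exactly, since passing to any subfamily only decreases $op$. The case $s=1$ is precisely Theorem~\ref{thm_old_oddtown}. For the inductive step with $s\ge 2$, observe that $|\mathcal A|=n+s>n$ exceeds the Berlekamp--Graver oddtown bound, so $\mathcal A$ itself cannot be an oddtown family. Consequently there exists some $A_0\in\mathcal A$ whose odd-intersection degree
\[
d(A_0):=\bigl|\{B\in\mathcal A\setminus\{A_0\}:|A_0\cap B|\text{ is odd}\}\bigr|
\]
is at least $1$. Setting $\mathcal A'=\mathcal A\setminus\{A_0\}$, which has size $n+(s-1)$ with $1\le s-1\le n-5\le n-4$, the induction hypothesis gives $op(\mathcal A')\ge(s-1)+2=s+1$, and therefore
\[
op(\mathcal A)=op(\mathcal A')+d(A_0)\ge(s+1)+1=s+2.
\]

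For the matching upper bound, consider the family
\[
\mathcal A^{*}=\bigl\{\{i\}:i\in[n]\setminus\{2,3\}\bigr\}\cup\bigl\{\{2,3,i\}:i\in\{1,4,5,\ldots,s+4\}\bigr\},
\]
which makes sense exactly when $s+4\le n$. It consists of $(n-2)+(s+2)=n+s$ subsets of $[n]$, each of odd size $1$ or $3$. Pairwise, any two distinct singletons are disjoint; any two triples $\{2,3,i\},\{2,3,j\}$ with $i\ne j$ meet in $\{2,3\}$; and for a singleton $\{i\}$ with $i\notin\{2,3\}$ and a triple $\{2,3,j\}$ with $i\ne j$ the intersection is empty—all of even size. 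The only odd intersections are the $s+2$ pairs $(\{i\},\{2,3,i\})$ for $i\in\{1,4,5,\ldots,s+4\}$, each of size $1$. Hence $op(\mathcal A^{*})=s+2$, showing tightness.

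The main obstacle is not the lower-bound induction, which becomes almost immediate once Theorem~\ref{thm_old_oddtown} is available and Berlekamp--Graver supplies a removable set $A_0$. The genuine insight is the tight construction itself: forcing nearly all pairwise intersections to equal the fixed even-sized set $\{2,3\}$ yields far fewer odd pairs than O'Neill's disjoint-$K_4^{(3)}$ family, which is exactly what disproves the natural $3s$-conjecture.
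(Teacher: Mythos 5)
Your proof is correct and follows essentially the same route as the paper: the lower bound is the identical induction on $s$, using Theorem~\ref{thm_old_oddtown} as the base case and deleting a set of positive odd-intersection degree (guaranteed by the Berlekamp--Graver bound) at each step. Your tight family $\mathcal A^{*}$ is, up to relabelling, exactly the paper's alternative extremal example $\mathcal A(\mathcal E,2,n)\setminus\{\{1,2,i\}:i>s+4\}$ built from Construction~\ref{cons_A_m_I} with $\mathcal E=\{\emptyset,\{1,2\}\}$ (the paper's primary construction $\mathcal A_s$ with the six-set center is a different but equivalent witness), so nothing is missing.
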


%Moreover, we can give different tight examples not equivalent under permutation.
Theorem~\ref{thm_for_oddtown}  focuses on the supersaturation problem of oddtown when $|\mathcal A|$ exceeds the corresponding extremal number $n$ by some $s$ smaller than $n$. One can also ask the same question when $s$ is larger than $n$. Although we do not compute the exact value for the smallest odd pair number, we have the following asymptotic result when $n$ goes to infinity. The standard asymptotic notations like $o$, $O$ and $\Theta$ are used in this paper to compare two functions when $n$  goes to infinity, and all logarithms are under base $2$ by default.

\begin{theorem}\label{theorem_oddtown_approximation}
Given some positive integer valued function $s=s(n)$, denote $M_o(s, n)$ as the minimum number of $op(\mathcal A)$ among all odd-sized subfamily $\mathcal A\subset 2^{[n]}$ with size $|\mathcal A|=n+s$.
%Given an odd-sized family $\mathcal A\subset 2^{[n]}$ with size $|\mathcal A|=n+s$ for some function $s=s(n)$.
If $s=cn+ o(n)$ for some constant $c>0$,
\begin{equation}\label{eq-mo2}
   M_o(s, n)=\big(\binom{\lfloor c\rfloor+1}2+(\lfloor c\rfloor+1)(c- \lfloor c\rfloor)\big)n+ o(n).
 \end{equation}
%The coefficients of the main terms in both lower bounds are tight.
\end{theorem}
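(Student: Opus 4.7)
The plan is to prove matching upper and lower bounds of $(b(c)+o(1))n$ for $M_o(s,n)$, where $b(c)=\binom{k+1}{2}+(k+1)(c-k)$ with $k=\lfloor c\rfloor$. A useful reformulation is $b(c)n = (k+1)s-\binom{k+1}{2}n$, which is exactly $\max_{k\ge 0}\bigl((k+1)s-\binom{k+1}{2}n\bigr)$, with the maximum attained at $k=\lfloor c\rfloor$. I plan to attack the two directions separately.

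For the upper bound I will exhibit an explicit ``multi-sunflower'' construction generalizing the extremal family for Theorem~\ref{thm_for_oddtown}. Fix $k=\lfloor c\rfloor$ and $t=c-k$. Split $[n]$ into a \emph{core zone} $\{1,\ldots,2k+2\}$ and a \emph{free zone} $F=\{2k+3,\ldots,n\}$, and pick any $T\subseteq F$ with $|T|=\lfloor tn\rfloor+O(1)$. Let $\mathcal{A}$ consist of (i) the singletons $\{j\}$ for $j\in F$, (ii) the triples $\{2i-1,2i,j\}$ for $i\in[k]$ and $j\in F$, and (iii) the triples $\{2k+1,2k+2,j\}$ for $j\in T$. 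A routine count gives $|\mathcal{A}|=n+s+O(1)$. Pairwise intersections within a single sunflower core are even, singletons are pairwise disjoint, and the only odd pairs come from four sources: $\binom{k}{2}|F|$ cross-edges between triples of distinct full cores sharing the same petal $j$; $k|F|$ edges between each full-core triple and its petal singleton; $k|T|$ edges between partial-core triples and full-core triples sharing a petal; and $|T|$ edges between partial-core triples and petal singletons. Summing yields $\binom{k+1}{2}|F|+(k+1)|T|=b(c)n+O(1)$.

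For the lower bound I will prove, for every fixed $k\ge 0$, the inequality
\[
 op(\mathcal{A}) \;\ge\; (k+1)(|\mathcal{A}|-n)-\binom{k+1}{2}n-o(n),
\]
and then take the maximum over $k$. The base case $k=0$ is an asymptotic restatement of Theorem~\ref{thm_for_oddtown}, combined with an averaging over size-$(2n-4)$ sub-families to cover the range $c$ close to $1$. For the inductive step from $k$ to $k+1$ I plan to extract a maximum oddtown subfamily $\mathcal{B}\subseteq\mathcal{A}$, apply the inductive hypothesis to $\mathcal{A}\setminus\mathcal{B}$ on the ambient ground set, and account for cross-edges via Theorem~\ref{thm_old_oddtown} applied to each $A\in\mathcal{A}\setminus\mathcal{B}$ together with $\mathcal{B}$. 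The accumulated cross-edges combined with the inductive bound should yield the target inequality up to an $o(n)$ error.

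The main obstacle is controlling the size of the maximum oddtown inside $\mathcal{A}$: when $\mathcal{A}$ is itself close to the extremal construction above, one checks that its maximum oddtown subfamily has size only $n-O(1)$ rather than $n$, so the extraction step has to be carried out carefully and the cross-edge bound needs to be amortised over all choices of $\mathcal{B}$. A related subtlety is the ``transition'' at integer values of $c$, where straight averaging between consecutive levels of $k$ falls short by a constant factor of roughly $\tfrac{2(c-k/2)}{1+c}$; I expect to bridge this gap by a direct linear-algebraic argument on the Gram matrix $V^\top V$ of characteristic vectors in $\mathbb{F}_2^n$, exploiting the fact that the kernel of the map $\mathbb{F}_2^{|\mathcal{A}|}\to \mathbb{F}_2^n$ has dimension at least $s$ and that every non-zero dependency $R$ in this kernel has even support size and contributes at least $|R|/2$ odd pairs inside $H[R]$.
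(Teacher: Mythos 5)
Your upper bound is correct, and it is essentially the paper's own construction in disguise: your singletons and triples form exactly the product family $\mathcal A(\mathcal E,m,n)$ of Construction~\ref{cons_A_m_I} with $m=2\lfloor c\rfloor+2$ and $\mathcal E=\{\emptyset,\{1,2\},\{3,4\},\ldots,\{2\lfloor c\rfloor+1,2\lfloor c\rfloor+2\}\}$, with the paper's ``almost equal multiplicities'' replaced by filling $k+1$ levels completely and one level partially; the count $\binom{k+1}{2}|F|+(k+1)|T|$ is right, up to the minor adjustment needed to make $|\mathcal A|$ exactly $n+s$ (e.g.\ when $c$ is an integer and the $o(n)$ term is negative you must delete rather than add sets), which costs only $o(n)$.

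The genuine gap is in the inductive step of your lower bound. You propose to count cross-edges between a maximum oddtown subfamily $\mathcal B$ and $\mathcal A\setminus\mathcal B$ by applying Theorem~\ref{thm_old_oddtown} to $\mathcal B\cup\{A\}$ for each $A\in\mathcal A\setminus\mathcal B$, but that theorem needs $|\mathcal B\cup\{A\}|\ge n+1$, i.e.\ $|\mathcal B|=n$, and, as you yourself observe, near the extremal configuration the maximum oddtown subfamily has size only $n-O(1)$ (it is $n-2$ for the extremal family of Theorem~\ref{thm_for_oddtown}). Your proposed repairs --- amortising over choices of $\mathcal B$, and the Gram-matrix argument via kernel dependencies --- are left as expectations rather than proofs: the observation that every dependency $R$ has even support and forces at least $|R|/2$ edges inside $H[R]$ is true, but you give no mechanism for aggregating a kernel of dimension $s$ into $(k+1)s-\binom{k+1}{2}n$ \emph{distinct} edges, since the supports of independent dependencies can overlap and the same edges get recounted. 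So as written the induction does not close, and the ``transition at integer $c$'' you worry about is an artifact of this detour. The fix is much simpler and requires none of this machinery: by maximality of $\mathcal B$, every $A\in\mathcal A\setminus\mathcal B$ already has at least \emph{one} odd pair with $\mathcal B$, so the cross-edges number at least $|\mathcal A|-|\mathcal B|$, and the oddtown theorem gives $|\mathcal B|\le n$; plugging this into your own inductive inequality yields $op(\mathcal A)\ge (k+2)(|\mathcal A|-|\mathcal B|)-(k+1)n-\binom{k+1}{2}n\ge (k+2)(|\mathcal A|-n)-\binom{k+2}{2}n$, exactly and with no $o(n)$ loss or anomaly at integer $c$. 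This repaired argument is precisely the paper's proof (Lemma~\ref{lemma_odd_approx_lower}), which phrases the same idea as iteratively peeling off maximum oddtown subfamilies instead of induction on $k$; also note that your base case $k=0$ needs no averaging over $(2n-4)$-subfamilies, since one cross-edge per member outside a maximum oddtown subfamily already gives $op(\mathcal A)\ge|\mathcal A|-n$.
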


For the eventown case, O'Neill~\cite{o2021towards} constructed a family of $2^{ n\slash2}+s$ even-sized subsets whose odd pair number is $s\cdot 2^{ n\slash2-1}$ when $n$ is doubly even. His construction is as follows.
 Suppose $n=2k=4\ell$ and let $X_1, X_2, \ldots, X_\ell\subset[n]$ be pairwise disjoint subsets with $X_i=\{4i-3, 4i-2, 4i-1, 4i\}$. For each $X_i$, define four subsets $A_{2i-1}=\{4i-3, 4i-2\}$, $A_{2i}=\{4i-1, 4i\}$, $B_{2i-1}=\{4i-3, 4i\}$, and $B_{2i}=\{4i-2, 4i-1\}$. Then define two collections,
\begin{equation}
\mathcal A=\{\cup_{j\in J}A_j: J\subset [k]\}\text{ and }\mathcal B=\{\cup_{j\in J}B_j: J\subset [k]\}.
\end{equation}
Observe that both $\mathcal A$ and $\mathcal B$ are extremal eventown families. Moreover for each $B\in\mathcal B\backslash\mathcal A$, $op(\mathcal A\cup \{B\})=2^{k-1}$. Note that $|\mathcal B\backslash\mathcal A|=2^k-2^\ell$ by linear algebra. For any $s\in [2^k-2^\ell]$, consider $\mathcal A'$ formed by $\mathcal A$ and $s$ distinct members from $\mathcal B\backslash\mathcal A$. Then $|\mathcal A'|=2^k+s$ and $op(\mathcal A')=s\cdot 2^{k-1}$. O'Neill~\cite{o2021towards} proved that this is best possible for $s=1,2$ and further conjectured this is true for a large range of $s$.

\begin{conjecture}\label{conj_even_town}
Let $n\ge 1$ and fix $1\le s\le 2^{\lfloor n\slash2\rfloor}- 2^{\lfloor n\slash4\rfloor}$. If $\mathcal A\subset 2^{[n]}$ consists of even-sized subsets with $|\mathcal A|\ge 2^{\lfloor n\slash2\rfloor}+s$, then $op(\mathcal A)\ge s\cdot 2^{\lfloor n\slash2\rfloor-1}$.
\end{conjecture}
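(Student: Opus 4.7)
The plan is to translate the problem to Fourier analysis on $\mathbb{F}_2^n$ by identifying each subset with its characteristic vector, so that the intersection parity $|A\cap B|\bmod 2$ becomes the $\mathbb{F}_2$ inner product $\langle A,B\rangle$. Let $f=\mathbbm{1}_{\mathcal{A}}$ and $\hat{f}(x)=\sum_{y\in\mathbb{F}_2^n}f(y)(-1)^{\langle x,y\rangle}$ be the unnormalized Fourier transform. Since every $A\in\mathcal{A}$ has even size, $\langle A,A\rangle=0$, and a direct expansion yields
\[
\sum_{A\in\mathcal{A}}\hat{f}(A)=\sum_{A,B\in\mathcal{A}}(-1)^{|A\cap B|}=|\mathcal{A}|^2-4\,op(\mathcal{A}),
\]
so the conjecture is equivalent to $\sum_{A\in\mathcal{A}}\hat{f}(A)\le|\mathcal{A}|^2-s\cdot 2^{\lfloor n/2\rfloor+1}$. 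My first step is to recover the weaker bound $op(\mathcal{A})\ge s\cdot 2^{\lfloor n/2\rfloor-2}$ promised in the abstract by combining Cauchy--Schwarz with Parseval's identity $\sum_x\hat{f}(x)^2=2^n|\mathcal{A}|$:
\[
\sum_{A\in\mathcal{A}}\hat{f}(A)\le\sqrt{|\mathcal{A}|}\cdot\sqrt{\sum_x\hat{f}(x)^2}=2^{n/2}\,|\mathcal{A}|,
\]
which, combined with $|\mathcal{A}|\ge 2^{\lfloor n/2\rfloor}+s$, closes this part.

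To sharpen the bound by the remaining factor of two, I would split into cases based on the size of a maximum eventown subfamily $\mathcal{B}\subseteq\mathcal{A}$. If $|\mathcal{B}|=2^{\lfloor n/2\rfloor}$, then $\mathcal{B}$ is a maximum eventown family of $[n]$, and by the Berlekamp--Graver characterization it equals a maximal self-orthogonal subspace $V\subseteq\mathbb{F}_2^n$. For every $C\in\mathcal{A}\setminus\mathcal{B}$, the extremality of $\mathcal{B}$ and the evenness of $|C|$ forbid $C\in V^{\perp}$; hence $v\mapsto\langle C,v\rangle$ is a nonzero linear functional on $V$, and exactly $|V|/2=2^{\lfloor n/2\rfloor-1}$ elements of $V=\mathcal{B}$ have odd inner product with $C$. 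Summing over the $s$ elements of $\mathcal{A}\setminus\mathcal{B}$ gives $op(\mathcal{A})\ge s\cdot 2^{\lfloor n/2\rfloor-1}$, matching the conjecture in this case.

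The real difficulty lies in the remaining regime where the largest eventown subfamily $\mathcal{B}\subseteq\mathcal{A}$ has size $2^{\lfloor n/2\rfloor}-t$ for some deficit $t\ge 1$. Embedding $\mathcal{B}$ in an ambient maximum eventown family $V^{\ast}\subseteq\mathbb{F}_2^n$ (not required to lie in $\mathcal{A}$) and rerunning the orthogonality argument, each $C\in\mathcal{A}\setminus\mathcal{B}$ has odd intersection with exactly $2^{\lfloor n/2\rfloor-1}$ elements of $V^{\ast}$, and hence with at least $2^{\lfloor n/2\rfloor-1}-t$ elements of $\mathcal{B}$. This local count only dominates $s\cdot 2^{\lfloor n/2\rfloor-1}$ when $t\le 2^{\lfloor n/2\rfloor-1}-s$, so the hard range is $t>2^{\lfloor n/2\rfloor-1}-s$, i.e., when $\mathcal{A}$ admits no even moderately large eventown subfamily; in this regime a plain Tur\'an-type bound on the odd-pair graph is too weak. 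My best hope would be either a Fourier stability statement showing that $\hat{f}$ must concentrate on a self-dual subspace (so that $\mathcal{A}$ is close to one), or an inductive scheme that iteratively removes a vertex of large odd-degree. This is the point where I expect the auxiliary hypothesis $s\le 2^{\lfloor n/8\rfloor}/n$ from the abstract to become necessary, and it is the main obstacle in proving the conjecture in full generality.
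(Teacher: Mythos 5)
You do not actually prove the conjecture, and you say so yourself; the honest assessment is that your completed portions coincide with the easy parts of what the paper does, while the genuinely hard regime is left open (as it is, in full generality, in the paper too -- the statement is a conjecture there, established only for $s\le 2^{\lfloor n/8\rfloor}/n$ and $n$ large, Theorem~\ref{thm_for_eventown}). Concretely: your case $|\mathcal B|=2^{\lfloor n/2\rfloor}$, and more generally your deficit computation showing $(s+t)(2^{\lfloor n/2\rfloor-1}-t)\ge s\cdot 2^{\lfloor n/2\rfloor-1}$ whenever $t\le 2^{\lfloor n/2\rfloor-1}-s$, is exactly the first case of the paper's Lemma~\ref{lemma_size_o_max_subeventown} (there phrased as $|\mathcal A'|\ge 2^{\lfloor n/2\rfloor-1}+s$, proved via $\dim(W\cap \bm v_A^\perp)=\dim(W)-1$; your embedding of $\mathcal B$ into an ambient maximal isotropic subspace $V^\ast$ is a clean equivalent). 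Your Parseval/Cauchy--Schwarz computation recovers the weaker bound $op(\mathcal A)>s\cdot 2^{\lfloor n/2\rfloor-2}$ only for even $n$: for odd $n$ the quantity $|\mathcal A|-2^{n/2}$ can be negative since $2^{n/2}>2^{\lfloor n/2\rfloor}$, so your inequality is vacuous there; the paper repairs this (Lemma~\ref{lemma_concentration_result_odd}) by running the Fourier argument on the index-two subgroup $\bm 1^\perp$ rather than on all of $\mathbb F_2^n$, which replaces $2^{n/2}$ by $2^{(n-1)/2}$.

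The genuine gap is the regime $t>2^{\lfloor n/2\rfloor-1}-s$, i.e. $|\mathcal B|<2^{\lfloor n/2\rfloor-1}+s$, which you flag but do not attack. Neither of your two suggested routes (Fourier stability toward a self-dual subspace, or deleting high-odd-degree vertices) is what makes the paper's partial result work. The paper splits this regime in two. When $|\mathcal B|\le\lceil N/(s+2)\rceil$ it greedily partitions $\mathcal A$ into maximal eventown subfamilies $\mathcal A_1,\mathcal A_2,\ldots$ and counts, for each layer, at least one odd pair from every later vertex into that layer, yielding $op(\mathcal A)\ge\frac{s+1}{2}(N-\alpha)$. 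In the middle range it proves (Lemma~\ref{lemma_middlesize}) that every $A\in\mathcal A\setminus\mathcal B$ has at least $s+1$ odd pairs with $\mathcal B$: assuming some $x$ has degree at most $s$ into $\mathcal B$, it removes $N(x)$ from $\mathcal B$ and the set $Y$ of vertices whose neighborhoods lie inside $N(x)$, bounds $|Y|$ by a Tur\'an-type argument (an independent set of size $is+1$ in $Y$ would contradict maximality of $\mathcal B$), and iterates; each iteration drops $\dim\,\mathrm{span}(\mathcal B_i)$ by exactly one, so after $\lfloor n/2\rfloor$ steps a still-large family would span dimension at most $1$, a contradiction. It is precisely this iteration, with the Tur\'an bound controlling the losses, that forces the hypotheses $s\le 2^{\lfloor n/8\rfloor}/n$ and $n$ large; without an idea of this kind (or a genuinely new one covering all $s\le 2^{\lfloor n/2\rfloor}-2^{\lfloor n/4\rfloor}$), your proposal cannot be completed into a proof of the stated conjecture.
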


Recently progress on Conjecture~\ref{conj_even_town} was made in \cite{antipov2022lovasz}, where half of the lower bound for even $n$ (but much weaker bound for odd $n$) and general $s$ was proved  by spectral analysis.

\begin{theorem}[\hspace{-0.01em}\cite{antipov2022lovasz}]\label{thm_old_eventown}
Let $n, s$ be positive integers. If an even-sized family $\mathcal A\subset 2^{[n]}$ satisfies $|\mathcal A|\ge 2^{ n\slash2}+s$, then $op(\mathcal A)\ge s\cdot 2^{\lfloor n\slash2\rfloor-2}$.
\end{theorem}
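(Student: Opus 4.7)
The plan is to deploy a short spectral/Walsh--Hadamard argument, which is essentially the approach of \cite{antipov2022lovasz}. Form the $2^n\times 2^n$ matrix $M$ whose rows and columns are indexed by subsets of $[n]$ and whose entries are $M_{A,B}=(-1)^{|A\cap B|}$. A standard calculation (identifying each subset with its indicator vector in $\mathbb{F}_2^n$ and viewing $M$ as the $n$-fold Kronecker power of the $2\times 2$ Hadamard matrix) shows $M^T M=2^n I$. Hence every eigenvalue of $M$ equals $\pm 2^{n/2}$, and in particular
\begin{equation*}
x^T M x \;\le\; 2^{n/2}\,\|x\|_2^2 \qquad\text{for every real vector } x.
\end{equation*}

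Next I would evaluate this quadratic form at $x=\chi_{\mathcal A}$, the $\{0,1\}$-indicator vector of $\mathcal A$, whose squared norm equals $|\mathcal A|$. Because every member of $\mathcal A$ has even size, the diagonal of $M$ contributes $\sum_{A\in\mathcal A}(-1)^{|A|}=|\mathcal A|$. Among the $|\mathcal A|(|\mathcal A|-1)$ off-diagonal ordered pairs, exactly $2\,op(\mathcal A)$ contribute $-1$ and the remainder contribute $+1$. Summing these gives
\begin{equation*}
\chi_{\mathcal A}^T M \chi_{\mathcal A} \;=\; |\mathcal A|+\bigl(|\mathcal A|(|\mathcal A|-1)-4\,op(\mathcal A)\bigr) \;=\; |\mathcal A|^2-4\,op(\mathcal A).
\end{equation*}
Combining this identity with the spectral bound and rearranging yields $op(\mathcal A)\ge |\mathcal A|(|\mathcal A|-2^{n/2})/4$. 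Plugging in $|\mathcal A|\ge 2^{n/2}+s$ produces
\begin{equation*}
op(\mathcal A) \;\ge\; \frac{(2^{n/2}+s)\,s}{4} \;\ge\; s\cdot 2^{n/2-2} \;\ge\; s\cdot 2^{\lfloor n/2\rfloor-2},
\end{equation*}
which is the claimed bound.

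The argument is very short, so the main obstacle is conceptual rather than technical: one must recognise that the pairwise intersection parities are precisely the entries of a Hadamard matrix, after which the trivial eigenvalue estimate immediately yields a lower bound on $op(\mathcal A)$. Observe further that the factor $\tfrac14$ is intrinsic to this method. The spectral inequality $x^TMx\le 2^{n/2}\|x\|_2^2$ is saturated only by vectors in the top eigenspace of $M$, which are very far from $\{0,1\}$-indicators of eventown-like extremal families; this is precisely why the resulting estimate misses the constant of Conjecture~\ref{conj_even_town} by a factor of two, and why sharper Fourier and combinatorial techniques (developed later in this paper) are required to partially close the remaining gap.
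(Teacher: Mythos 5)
Your proof is correct: the Hadamard-matrix eigenvalue bound $\chi_{\mathcal A}^T M\chi_{\mathcal A}\le 2^{n/2}\|\chi_{\mathcal A}\|_2^2$ combined with the identity $\chi_{\mathcal A}^T M\chi_{\mathcal A}=|\mathcal A|^2-4\,op(\mathcal A)$ gives exactly the inequality needed, and the final numerical step is sound. This is essentially the same spectral/Fourier argument the paper uses (its Lemma on the concentration of $e(H)$, proved via Plancherel and Cauchy--Schwarz for the indicator function on $\mathbb F_2^n$, is precisely your matrix inequality in different notation), so there is nothing to add beyond noting the equivalence.
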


Our next contribution is to show that Conjecture~\ref{conj_even_town} is true for a wide range of $s$ and for sufficiently large $n$ by using extremal graph theory. We state it below.
\begin{theorem}\label{thm_for_eventown}
Let $n$ be a large enough integer and fix $s\in[2^{\lfloor\frac n 8\rfloor}\slash n]$. Any even-sized family $\mathcal A\subset 2^{[n]}$ with $|\mathcal A|\ge 2^{\lfloor n\slash2\rfloor}+s$ satisfies $op(\mathcal A)\ge s\cdot 2^{\lfloor n\slash2\rfloor-1}$.
\end{theorem}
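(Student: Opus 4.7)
My plan is to reduce the statement, via a dichotomy, to the case in which $\mathcal{A}$ actually contains an extremal eventown subfamily of size $M:=2^{\lfloor n/2\rfloor}$; a short linear-algebra computation then finishes the proof. Throughout, identify each $A\in\mathcal{A}$ with its characteristic vector $v_A\in\mathbb{F}_2^n$, so $|A\cap B|\equiv v_A\cdot v_B\pmod{2}$, and write $N=|\mathcal{A}|=M+s$.

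I first record the classical structural fact that every extremal eventown $\mathcal{E}\subset 2^{[n]}$ of size $M$ is an $\mathbb{F}_2$-linear subspace of $\mathbb{F}_2^n$: $\mathcal{E}$ must contain $\emptyset$ and must be closed under the symmetric difference, since otherwise $\mathcal{E}\cup\{\emptyset\}$ or $\mathcal{E}\cup\{A\triangle B\}$ would itself be a strictly larger eventown. As an immediate corollary, every even-sized $v\notin\mathcal{E}$ satisfies $v\notin\mathcal{E}^{\perp}$. This handles the \emph{easy direction}: if $\mathcal{A}\supseteq\mathcal{E}$ for some extremal eventown $\mathcal{E}$, write $\mathcal{A}=\mathcal{E}\sqcup\mathcal{B}$ with $|\mathcal{B}|=s$; for every $A\in\mathcal{B}$, the linear functional $v\mapsto v_A\cdot v$ on the subspace $\mathcal{E}$ is nonzero, its kernel is a hyperplane of $\mathcal{E}$, and so exactly $M/2$ members of $\mathcal{E}$ intersect $A$ oddly, giving $op(\mathcal{A})\ge sM/2$.

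It therefore suffices to show that, under the hypothesis $1\le s\le 2^{\lfloor n/8\rfloor}/n$ and $n$ large, $\mathcal{A}$ must contain some extremal eventown. Let $\mathcal{E}^{*}\subseteq\mathcal{A}$ be a maximum eventown subfamily of size $e\le M$, set $K:=\mathrm{span}_{\mathbb{F}_2}(\mathcal{E}^{*})$ (a self-orthogonal code with $|K|\ge e$), and extend $K$ to an extremal eventown $L\subset 2^{[n]}$. Maximality of $\mathcal{E}^{*}$ inside $\mathcal{A}$ forces $L\cap\mathcal{A}=\mathcal{E}^{*}$ (so $|L\setminus\mathcal{A}|=M-e$) and $v_A\notin K^{\perp}$ for every $A\in\mathcal{A}\setminus\mathcal{E}^{*}$. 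Applying the hyperplane count to $L$, each such $A$ has exactly $M/2$ odd intersections with $L$, at most $M-e$ of which lie outside $\mathcal{A}$; double-counting the odd edges between $\mathcal{A}\setminus\mathcal{E}^{*}$ and $\mathcal{E}^{*}$ therefore yields
\begin{equation*}
op(\mathcal{A})\;\ge\;(N-e)\bigl(\tfrac{M}{2}-(M-e)\bigr)\;=\;(M+s-e)\bigl(e-\tfrac{M}{2}\bigr),
\end{equation*}
which is already at least $sM/2$ whenever $e\ge M/2+s$. The remaining task is to exclude the range $1\le e\le M/2+s-1$.

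The main obstacle is this last step, and it is here that extremal graph theory and the specific hypothesis $s\le 2^{\lfloor n/8\rfloor}/n$ enter. The plan is to combine two complementary estimates. Applying the same hyperplane count to the smaller subspace $K$ in place of $L$ gives each $A\in\mathcal{A}\setminus\mathcal{E}^{*}$ at least $e-|K|/2$ odd neighbors inside $\mathcal{E}^{*}$; when $\mathcal{E}^{*}$ is itself a subspace (so $|K|=e$) this produces $op(\mathcal{A})\ge (N-e)\cdot e/2\ge sM/2$ whenever $e\ge s$. Conversely, when $e$ is small the odd-intersection graph $G$ on $\mathcal{A}$ has small independence number, and Tur\'an's bound $op(\mathcal{A})=|E(G)|\ge N(N-e)/(2e)$ forces $op(\mathcal{A})\ge sM/2$ as soon as $e\le M/(1+s)$; since $s\le 2^{\lfloor n/8\rfloor}/n\ll\sqrt M$, the two regimes overlap and together cover every $e\le M/2+s-1$ whenever $\mathcal{E}^{*}$ happens to be a subspace. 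The genuinely delicate case, and the hardest step to carry out, is the intermediate range in which $\mathcal{E}^{*}$ is not a subspace and $|K|$ is substantially larger than $e$; here one passes to the cosets of $K$ in $\mathbb{F}_2^n$ and applies a K\H{o}v\'ari--S\'os--Tur\'an bound to the auxiliary bipartite graph whose parts are $\mathcal{A}\setminus\mathcal{E}^{*}$ and a suitable family of ``test'' hyperplanes of $\mathbb{F}_2^n$. The quantitative hypothesis $s\le 2^{\lfloor n/8\rfloor}/n$ is exactly what makes this bipartite count sufficient, and executing it forms the technical core of the argument.
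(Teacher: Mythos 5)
Your easy regimes are fine, and they essentially reproduce the boundary cases that the paper also handles first (its Lemma~\ref{lemma_size_o_max_subeventown}): when the maximum eventown subfamily $\mathcal{E}^{*}$ has size $e\ge M/2+s$, a hyperplane count gives $op(\mathcal{A})\ge (N-e)(e-M/2)\ge sM/2$, and when $e\le M/(1+s)$ the small independence number plus a Tur\'an/Caro--Wei bound gives the claim. (One side remark: your framing ``it suffices to show $\mathcal{A}$ contains an extremal eventown'' is not a correct reduction --- many families $\mathcal{A}$ of size $M+s$ contain no extremal eventown, and for those one must still prove the bound; you implicitly abandon this framing when you start discussing general $e<M$, so it does no harm, but it should not be stated as the plan.)

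The genuine gap is the case you yourself flag as ``the technical core'': $\mathcal{E}^{*}$ not a subspace (so $e\ll |K|$) and $e$ in the intermediate range $M/(1+s)<e<M/2+s$. There you offer only a gesture --- ``pass to cosets of $K$ and apply a K\H{o}v\'ari--S\'os--Tur\'an bound to an auxiliary bipartite graph with test hyperplanes'' --- without specifying the bipartite graph, the forbidden $K_{a,b}$, or any estimate showing the hypothesis $s\le 2^{\lfloor n/8\rfloor}/n$ suffices; as written this is a placeholder, not a proof, and it is not at all clear that a KST count can be made to work. This intermediate range is precisely where the paper's real work lies: its Lemma~\ref{lemma_middlesize} shows that, in this range, either $op(\mathcal{A})\ge s\cdot 2^{\lfloor n/2\rfloor-1}$ already holds or every $A\in\mathcal{A}\setminus\mathcal{A}'$ has at least $s+1$ odd neighbours in the maximum eventown subfamily $\mathcal{A}'$, which immediately forces $op(\mathcal{A})>s\cdot 2^{\lfloor n/2\rfloor-1}$. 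That lemma is proved by an iterative deletion argument quite different from your sketch: assuming some $x$ has degree at most $s$ into $\mathcal{A}'$, one bounds the ``shadow'' $Y=\{y: N(y)\subseteq N(x)\}$ via Tur\'an's theorem (an independent set of size $s+1$ in $Y$ would contradict the maximality of $\mathcal{A}'$), deletes $N(x)$ and $Y$, shows a low-degree vertex persists, and repeats; each step drops $\dim(\mathrm{span}(\mathcal{A}_i))$ by exactly one while the sets stay large, and after $\lfloor n/2\rfloor$ steps the dimension bound contradicts the size bound. Until you supply an argument of comparable substance for your intermediate case, the proposal does not prove the theorem.
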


Similar to the supersaturation problem of oddtown, one can consider the case when $s$ is larger than $2^{\lfloor n\slash2\rfloor}$. When $| \mathcal{A}| \ge 2^{(1-\epsilon)n}$ for some $\epsilon\in(0,1/2)$, O'Neill \cite{o2021towards} proposed the following problem.
\begin{problem}
Let $\epsilon\in\left( 0,\frac{1}{2}\right) $ and $n$ be sufficiently large. Determine the maximum value $f_n(\epsilon)$ so that if $\mathcal{A}\subseteq 2^{[n]}$ is an even-sized family with $\left| \mathcal{A}\right| \ge 2^{(1-\epsilon)n}$, then $op(\mathcal{A})\ge f_n(\epsilon)\binom{|\mathcal{A}|}{2}$.
\end{problem}

%Unlike the case when $s$ is small, in which the smallest $|op(\mathcal A)|$ is linear to $s$,
We show that when $| \mathcal{A}| \ge 2^{(1-\epsilon)n}$ for any given $\epsilon\in(0,1/2)$, the density of $op(\mathcal A)$ always approaches $\frac12$. The formal statement is as follows.

\begin{theorem}\label{thm_eventownlar}
Let $\epsilon\in\left( 0,\frac{1}{2}\right) $ and $n\ge {1\slash\epsilon}$. We have
$$
		\frac{1}{2}\left( 1-2^{\left( \epsilon-\frac{1}{2}\right) n} \right) \le f_n(\epsilon) \le
\begin{cases}
\frac{1}{2}, & n\text{ odd;}\\
\frac12(1-\frac1{2^{n-1}-1}), & n\text{ even.}
\end{cases}
$$
Hence for fixed $\epsilon\in \left( 0,\frac{1}{2} \right)$, we have $\lim\limits_{n\rightarrow \infty} f_n(\epsilon) = \frac{1}{2}$.
	\end{theorem}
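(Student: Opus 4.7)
The plan is to prove the lower and upper bounds on $f_n(\epsilon)$ separately; the limit $\lim_{n\to\infty} f_n(\epsilon) = 1/2$ then follows by squeezing. I will establish the lower bound by a single Fourier/spectral inequality over $\mathbb F_2^n$, and match it with the explicit construction consisting of all even-sized subsets of $[n]$.

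For the lower bound, identify every $A \in \mathcal A$ with its characteristic vector $v_A \in \mathbb F_2^n$, so $|A\cap B| \equiv \langle v_A, v_B\rangle \pmod 2$. Since each $v_A$ has even weight one has $\langle v_A, v_A\rangle = 0$, and grouping ordered pairs by the parity of the inner product gives the key identity
$$\sum_{A,B \in \mathcal A}(-1)^{\langle v_A, v_B\rangle} \;=\; |\mathcal A|^2 - 4\,op(\mathcal A).$$
Let $f\colon \mathbb F_2^n \to \{0,1\}$ be the indicator of $\{v_A : A \in \mathcal A\}$, and let $H$ be the $\mathbb F_2^n \times \mathbb F_2^n$ Walsh--Hadamard matrix $H_{x,y} = (-1)^{\langle x,y\rangle}$. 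The left-hand side above equals $\langle f, Hf\rangle$, and since $H^2 = 2^n I$ gives $\|H\|_{\mathrm{op}} = 2^{n/2}$, Cauchy--Schwarz yields $\langle f, Hf\rangle \le 2^{n/2}\|f\|_2^2 = 2^{n/2}|\mathcal A|$. Rearranging produces $op(\mathcal A) \ge |\mathcal A|(|\mathcal A| - 2^{n/2})/4$. Dividing by $\binom{|\mathcal A|}{2}$ and substituting $|\mathcal A| \ge 2^{(1-\epsilon)n}$ reduces, after one round of clearing denominators, to the inequality $2^{(1/2-\epsilon)n} \ge 1$, which is exactly the hypothesis $\epsilon \le 1/2$; the claimed $\tfrac12\bigl(1 - 2^{(\epsilon-1/2)n}\bigr)$ drops out directly.

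For the upper bound take $\mathcal A$ to be the family of all even-sized subsets of $[n]$, whose size $2^{n-1}$ meets $|\mathcal A| \ge 2^{(1-\epsilon)n}$ since $n\ge 1/\epsilon$. For any $v_A \in \mathbf 1^\perp$, the set of $v_B \in \mathbf 1^\perp$ with $\langle v_A, v_B\rangle = 1$ is either empty (when $v_A \in \{0, \mathbf 1\} \cap \mathbf 1^\perp$) or a coset of a codimension-two subspace of size $2^{n-2}$. Using that $\mathbf 1 \in \mathbf 1^\perp$ iff $n$ is even, summing over $v_A$ and halving yields $op(\mathcal A) = (2^{n-1} - c)\cdot 2^{n-3}$ with $c = 1$ for odd $n$ and $c = 2$ for even $n$; dividing by $\binom{2^{n-1}}{2}$ reproduces the two upper bounds stated in the theorem. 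Sandwiching with the lower bound and sending $n \to \infty$ for fixed $\epsilon$ gives $\lim f_n(\epsilon) = 1/2$. The main obstacle is only bookkeeping---the spectral step is a single line and the direct count is routine---the content of the argument is that the Walsh--Hadamard operator norm $2^{n/2}$ is already sharp enough to recover precisely the exponent $2^{(\epsilon-1/2)n}$ that the explicit construction forces.
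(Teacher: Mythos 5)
Your proposal is correct and follows essentially the same route as the paper: your Walsh--Hadamard operator-norm bound $\langle f,Hf\rangle\le 2^{n/2}\|f\|_2^2$ is exactly the paper's Lemma on $|v(H)^2-4e(H)|\le 2^{n/2}v(H)$ (proved there via Plancherel plus Cauchy--Schwarz on the indicator of $V(H)$), and your upper-bound construction and degree count for the family of all even-sized subsets coincide with the paper's. The only difference is cosmetic packaging of the same spectral inequality, so nothing further is needed.
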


By improving an intermediate result in the proof of Theorem~\ref{thm_eventownlar}, we show that half of the lower bound in Conjecture~\ref{conj_even_town} is true for general $n$ and $s$. Thus it completes the result in Theorem~\ref{thm_old_eventown} from \cite{antipov2022lovasz}.

\begin{theorem}\label{thm_complete_old_eventown}
For any positive integers $n$ and $s$, let $\mathcal A\subset 2^{[n]}$ be an even-sized family with $|\mathcal A|\ge 2^{\lfloor n\slash2\rfloor+s},$ then $op(\mathcal A)> s\cdot2^{\lfloor n\slash2\rfloor-2}$.
\end{theorem}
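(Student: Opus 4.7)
The plan is to use discrete Fourier analysis over $\mathbb{F}_2^n$ to control the exponential sum
\[
T(\mathcal A) := \sum_{A,B\in\mathcal A}(-1)^{|A\cap B|} = \sum_{x,y\in\mathbb{F}_2^n} f(x)f(y)(-1)^{\langle x,y\rangle},
\]
where $f=\mathbbm{1}_{\mathcal A}$ is viewed as a function on $\mathbb{F}_2^n$ via characteristic vectors and $\langle\cdot,\cdot\rangle$ is the standard $\mathbb{F}_2$-bilinear form. Since every $A\in\mathcal A$ has even size, the diagonal terms contribute exactly $+|\mathcal A|$, and a direct count yields
\[
op(\mathcal A) = \tfrac14\bigl(|\mathcal A|^2 - T(\mathcal A)\bigr).
\]
Thus it suffices to establish the single intermediate estimate $T(\mathcal A)\le |\mathcal A|\cdot 2^{\lfloor n/2\rfloor}$; combined with $|\mathcal A|\ge 2^{\lfloor n/2\rfloor}+s$ this immediately gives
\[
op(\mathcal A)\ge \frac{|\mathcal A|\bigl(|\mathcal A|-2^{\lfloor n/2\rfloor}\bigr)}{4} \ge \frac{\bigl(2^{\lfloor n/2\rfloor}+s\bigr)s}{4} > s\cdot 2^{\lfloor n/2\rfloor-2},
\]
which is exactly Theorem~\ref{thm_complete_old_eventown}.

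For $n$ even the target bound is a direct consequence of Parseval and Cauchy–Schwarz on the full group $\mathbb{F}_2^n$. Writing $\hat f(y)=2^{-n}\sum_x f(x)(-1)^{\langle x,y\rangle}$, one has $T(\mathcal A)=2^n\sum_x f(x)\hat f(x)$ and $\sum_y \hat f(y)^2=|\mathcal A|/2^n$, so
\[
T(\mathcal A) \le 2^n\sqrt{|\mathcal A|}\cdot\sqrt{|\mathcal A|/2^n} = |\mathcal A|\cdot 2^{n/2} = |\mathcal A|\cdot 2^{\lfloor n/2\rfloor}.
\]
This is essentially the intermediate estimate already exploited in the proof of Theorem~\ref{thm_eventownlar} when $n$ is even, and it needs no sharpening there.

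The real work lies in the case of $n$ odd, where the naive ambient-space Cauchy–Schwarz is off by a factor of $\sqrt{2}$. The key structural observation will be that $f$ is supported on the even-weight subspace $W=\mathbf{1}^\perp\subset\mathbb{F}_2^n$, of dimension $n-1$, and that for odd $n$ one has $\mathbf{1}\notin W$, so the radical $W\cap W^\perp=W\cap\{0,\mathbf{1}\}$ is trivial and the restricted form $\langle\cdot,\cdot\rangle|_{W\times W}$ is non-degenerate. Consequently $y\mapsto \chi_y:=(-1)^{\langle\cdot,y\rangle}$ is a bijection from $W$ onto the character group $\widehat W$. Carrying out the Fourier analysis intrinsically on $W$ (rather than on the ambient $\mathbb{F}_2^n$) rewrites
\[
T(\mathcal A) = |W|\sum_{y\in W} f(y)\,\widehat f_W(\chi_y),
\]
and Parseval on $W$ gives $\sum_{\chi\in\widehat W}|\widehat f_W(\chi)|^2 = |\mathcal A|/|W|$, where the bijection is used to transport the sum over $y\in W$ into a sum over $\widehat W$. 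Applying Cauchy–Schwarz then yields
\[
T(\mathcal A) \le |W|\cdot\sqrt{|\mathcal A|}\cdot\sqrt{|\mathcal A|/|W|} = |\mathcal A|\cdot\sqrt{|W|} = |\mathcal A|\cdot 2^{(n-1)/2} = |\mathcal A|\cdot 2^{\lfloor n/2\rfloor}.
\]

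The main obstacle is exactly to recognise and exploit the non-degeneracy of the standard form on $W$ when $n$ is odd, thereby recovering the $\sqrt 2$ factor that is lost by blindly applying Cauchy–Schwarz on the ambient $\mathbb{F}_2^n$. Everything else reduces to elementary Fourier identities, and the strict inequality in the conclusion follows from $|\mathcal A|>2^{\lfloor n/2\rfloor}$ in the chain of inequalities above.
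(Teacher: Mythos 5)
Your proposal is correct and follows essentially the same route as the paper: the quantity $T(\mathcal A)=\sum_{A,B}(-1)^{|A\cap B|}$ you bound is exactly the paper's $v(H)^2-4e(H)$, and your Parseval--Cauchy--Schwarz estimates on $\mathbb F_2^n$ (for $n$ even) and on the even-weight subgroup $\bm 1^\perp$ with the non-degeneracy/bijectivity of $\bm m\mapsto\chi_{\bm m}$ (for $n$ odd) are precisely the paper's Lemmas~\ref{lemma_concentration_result} and~\ref{lemma_concentration_result_odd}. The concluding arithmetic matches the paper's proof of Theorem~\ref{thm_complete_old_eventown}.
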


 The proofs of Theorem~\ref{thm_eventownlar} and Theorem~\ref{thm_complete_old_eventown} apply
Fourier analysis on finite abelian groups~\cite{stein2011fourier}.%, which to the best of our knowledge, is the first time for Fourier analysis to show its power in this area.

The rest of this paper is organized as follows. In Section~\ref{sec_preliminary}, we introduce some necessary notations and basic results in Fourier analysis, and then give a glance at the supersaturation problem of Tur\'an theorem. We prove Theorems~\ref{thm_for_oddtown}-\ref{theorem_oddtown_approximation} in Section~\ref{sec_supersatuation_oddtown}, and Theorems~\ref{thm_for_eventown}-\ref{thm_complete_old_eventown}  in Section~\ref{sec_supersaturation_eventown}.  Finally, a conclusion and some remarks are listed in Section~\ref{sec-conc}.
%\subsection{Related work}
%Over the last few decades, many extensions of eventown and oddtown problems have been studied. For example, one can extend the problem from modulo 2 to modulo $\ell$ for some $\ell\ge 2$, which is known as  $\ell$-even\slash oddtown problem~\cite{babai1988linear, FRANKL1983215}. In this case, the restricted intersection becomes  $|A_i\cap A_j|\equiv 0\mod\ell$ for $1\le i<j\le m$, while $\ell$-eventown needs $|A_i|\equiv 0\mod\ell$, and $\ell$-oddtown needs $|A_i|\not\equiv 0\mod\ell$, for any $i\in [m]$. One can further consider extending the intersecting restriction from pairwise intersections to $k$-wise intersections. We say $\mathcal A$ is a $k$-wise $\ell$-eventown if for any subfamily $\mathcal B\subset\mathcal A$ with $|\mathcal B|=k$, the size of the common intersection of members in $\mathcal B$ is always a multiple of $\ell$. %We remark that the definition of $k$-wise eventown here do not need $|A_i|\equiv 0\mod\ell$ for each $i\in [m]$ anymore.
%For more information one may refer to~\cite{vu1997extremal, sudakov2018two} and the references therein.
%

\section{Preliminary}\label{sec_preliminary}

%\subsection{Preliminary}\label{subsec_preliminary}

We begin with some useful notations used throughout this paper.
For two integers $a\le b$, we always use $[a, b]$ to denote the set of consecutive integers from $a$ to $b$, i.e., $\{a, a+1, \ldots, b\}$. For any set $S$ and positive integer $k$, $\binom S k$ stands for the collection of all $k$-sized subsets of $S$, and $2^S$ stands for the collection of all subsets of $S$. For a subset $A\subseteq[n]$, let $\bm v_A\in \mathbb F_2^n$ be the characteristic vector of $A$, that is, for any $j\in [n]$, $\bm v_A(j)=1$ if and only if $j\in A$. For a vector subset $W$ of $\mathbb F_2^n$, let $W^\perp=\{\bm v\in \mathbb F_2^n: (\bm v, \bm u)=0, ~ \forall\bm u \in W\}$,  where $(\cdot , \cdot)$ means the natural inner product in $\mathbb F_2^n$. If $W=\{\bm u\}$ for some $\bm u$, we simply write $\bm u^\perp$ instead of $\{\bm u\}^\perp$.
%If $a<b$, set $[a, b]$ as the empty set. We write $[1, n]$ as $[n]$ for short.In this paper, we usually regard $[n]$ as the ground set, and family $\mathcal A$ is always a subset of $2^{[n]}$.

%In our work, it is common to see a set family $\mathcal A\subset 2^{[n]}$ as a graph, so some basic definitions of graph theory needed are listed here. For further reading, one can refer to, for example,~\cite{Diestel2017}.
%A (simple) graph $G=(V(G), E(G))$ is a pair of two finite sets, its vertex set $V(G)$ (for simplicity, $V$) and its edge set $E(G)$ (for simplicity, $E$). Each element in $E$ is a subset of $V$ with size two. An element in $V$ (resp. $E$) is called a vertex (resp. an edge) of $G$. The order of graph $G$ is the size of $V(G)$, and the size of $G$, denoted by $|G|$, is the size of $E(G)$. If each 2-sized subset of $V$ is contained in $E$, i.e., $E=\binom{V} 2$, the graph $G$ is called the complete graph. A complete graph with order $m$ is denoted as $K_m$. If there exists a bipartition of $V=V_1\cup V_2$ such that each edge in $E$ contains exactly one vertex in $V_1$ and one vertex in $V_2$, we call $G$ is a bipartite graph with vertex sets $V_1$ and $V_2$. Further if $E=V_1\times V_2$, we say the bipartite graph is complete. Similarly we can define $r$-partite graph and $r$-partite complete graph for any integer $r\ge 2$.

Given a  graph $H=(V, E)$, let $v(H)$ and $e(H)$ denote the numbers of vertices and edges in $H$, respectively. For a vertex set $U\subset V$, the induced subgraph of $H$ on set $U$ is denoted by $H[U]$, which has vertex set $U$ and edge set $E\cap\binom U 2$. Similarly, for two  disjoint $U_1, U_2\subset V$, let $H[U_1, U_2]$ denote the induced bipartite subgraph of $H$ with two parts $U_1$ and $U_2$ and let $e(U_1, U_2)$ denote the number of edges in $H[U_1, U_2]$.
The complement of $H$ is denoted as $\overline H$. For any vertex subset $U\subset V$, the neighborhood of $U$ on $H$, denoted by $N_H(U)$, is the set of vertices in $V\setminus U$ which is adjacent to at least one vertex in $U$. If $U=\{v\}$, we simply write $N_H(v)$. The degree of $v$ is denoted by $d_H(v)$. The subscript $H$ will be omitted if $H$ is clear.

%For a vertex $v$ in graph $G$, a neighbor of $v$ is some $u\in V\backslash\{v\}$ such that $\{u, v\}\in E$. The degree of $v$, denoted by $d_G(v)$ (or simply $d(v)$ if $G$ is clear) is the number of neighbors of $v$. A vertex $v$ in $G$ is called isolated if $d_G(v)=0$. A set $I\subset V$ is called an independent set of $G$ if $G[I]$ only has isolated vertices.
%A matching is a graph with each vertex having degree $1$. By double counting, only even-ordered matchings exist.

For any set family $\mathcal A\subseteq 2^{[n]}$, the \emph{odd pair graph} of $\mathcal A$, denoted by $H(\mathcal A)$, is constructed in the following way: the vertices are members in $\mathcal A$, and  two different members $A, B\in \mathcal A$ are adjacent if and only if $|A\cap B|$ is odd. Consequently,  the number of edges in $H(\mathcal A)$ is exactly $op(\mathcal A)$.
There is another way to construct the odd pair graph: let $V\subseteq \mathbb F_2^n$, and define graph $H(V)$ as vertex set $V$ and edge set $\{\{\bm u, \bm v\}: \bm u\ne \bm v\in V; (\bm u, \bm v)=1\}$. It is easy to check that under bijection: $A\mapsto \bm v_A$ from $2^{[n]}$ to $\mathbb F_2^n$, these two constructions are equivalent when $V$ is the image of $\mathcal A$. Thus when considering the odd pair graph, a vertex, its corresponding subset $A\subseteq [n]$, and its characteristic vector $\bm v_\mathcal A \in \mathbb F_2^n$ are seen as the same object, and sometimes they share the same notation. Denote the odd pair graph of the largest collection $2^{[n]}$ as $H_0=H(2^{[n]})=H(\mathbb F_2^n)$. Under these notations, for any subfamilies $\mathcal A_1\subseteq \mathcal A_2\subseteq 2^{[n]}$, $H(\mathcal A_1)$ is the induced subgraph of $ H(\mathcal A_2)$ on $\mathcal A_1$, and specially, $H(\mathcal A_1)=H_0[\mathcal A_1]$.

\subsection{Fourier analysis}
Given a finite abelian group $G$ with the additive notation, a {\it character} on $G$ is defined as a complex valued function $\chi: G\to S^1$, where $S^1$ means the unit circle in the complex plane, such that for all $a, b\in G$, $\chi(a+b)=\chi(a)\chi(b)$. Denote the set of all characters of $G$ as $\widehat G$. Then $\widehat G=Hom(G, S^1)$ is a multiplicative abelian group with multiplication defined by $(\chi_1\cdot\chi_2)(a)=\chi_1(a)\chi_2(a)$ for all $a\in G$. Since group $G$ is finite abelian, one can use the fundamental theorem for finite abelian groups~\cite{kurzweil2004theory} to show that $\widehat{G}\cong G$, where $\cong$ means the group isomorphism. In particular, $|\widehat G|=|G|$.

Denote the vector space of complex-valued functions on $G$ as $L(G)$. Define a Hermitian inner product on $L(G)$ by
$$(f, g)=\frac 1{|G|}\sum_{a\in G}f(a)\overline{g(a)}, \text{ for any }f, g\in L(G).$$
It can be proved that the characters of $G$ form an orthonormal basis of $L(G)$ with the inner product defined above. Given a function $f\in L(G)$, define the {\it Fourier transform} of $f$ as a function $\hat f: \widehat G\to \mathbb C$, such that for any $\chi\in\widehat G$, $$\hat f(\chi)=(f, \chi)=\frac 1{|G|}\sum_{a\in G}f(a)\overline{\chi(a)},$$
 which leads to the following {\it Fourier inversion formula},
$$f=\sum_{\chi\in\widehat G}(f, \chi)\chi= \sum_{\chi\in\widehat G}\hat f(\chi)\chi.$$
Define the norm of $f\in L(G)$ from the inner product as $\Vert f\Vert=(f, f)^{1\slash 2}$. Then the following is the corresponding {\it Plancherel formula,}
$$\Vert f\Vert^2=\sum_{\chi\in\widehat G}|\hat f(\chi)|^2.$$

\subsection{Supersaturation of Tur\'an theorem}
Next we introduce some results on the supersaturation problem of Tur\'an theorem~\cite{erdHos1983supersaturated}.  Denote $ex(n, K_t)$ as the maximum possible number of edges in a graph on $n$ vertices which does not contain $K_t$ as a subgraph. Tur\'an \cite{turan1941extremalaufgabe} proved that for any positive integers $n$ and $r$, $ex(n, K_{r+1})=e(T_{n, r})$, where $T_{n, r}$  is the unique $n$ vertex $r$-partite complete graph with each part of size $\lfloor\frac n r\rfloor$ or $\lceil\frac n r\rceil$. An approximate form of Tur\'an theorem shows that:
\begin{theorem}\label{coro_turan}
For positive integers $n$ and $r$,
$$ex(n, K_{r+1})\le\big(\frac{r-1}r \big)\frac{n^2}2.$$
\end{theorem}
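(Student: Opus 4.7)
The plan is to reduce to the exact Turán problem and then apply a simple convexity bound. Since Turán's theorem (already cited in the excerpt) guarantees $ex(n, K_{r+1}) = e(T_{n,r})$, it suffices to estimate $e(T_{n,r})$ from above by $\frac{r-1}{r} \cdot \frac{n^2}{2}$.

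First I would write the part sizes of $T_{n,r}$ as $n_1, n_2, \ldots, n_r$ with $\sum_{i=1}^r n_i = n$ and each $n_i\in\{\lfloor n/r\rfloor, \lceil n/r\rceil\}$. Since $T_{n,r}$ is the complete $r$-partite graph on these parts, its edge count can be expanded as
$$e(T_{n,r}) = \sum_{1\le i<j\le r} n_i n_j = \frac{1}{2}\left(\Bigl(\sum_{i=1}^r n_i\Bigr)^{\!2} - \sum_{i=1}^r n_i^2\right) = \frac{1}{2}\left(n^2 - \sum_{i=1}^r n_i^2\right).$$

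Next I would apply the Cauchy--Schwarz inequality (equivalently, the power-mean inequality) to the sum of squares:
$$\sum_{i=1}^r n_i^2 \ \ge\ \frac{1}{r}\Bigl(\sum_{i=1}^r n_i\Bigr)^{\!2} \ =\ \frac{n^2}{r}.$$
Substituting back yields $e(T_{n,r}) \le \frac{1}{2}\bigl(n^2 - n^2/r\bigr) = \frac{r-1}{r}\cdot\frac{n^2}{2}$, which is precisely the claimed bound.

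There is essentially no obstacle here; the only subtle point is equality. Equality in Cauchy--Schwarz forces all $n_i$ equal, so the inequality is tight exactly when $r \mid n$ and otherwise strict, but since the theorem claims only a non-strict bound this is fine. If one preferred to avoid invoking Turán's exact extremal characterization, an alternative route is induction on $n$ that either removes a vertex of degree at most $(r-1)n/r$ (and applies the inductive hypothesis) or argues that all degrees are close to $(r-1)n/r$ and bounds the edge sum directly; this is slightly longer, so the Cauchy--Schwarz route above is preferable.
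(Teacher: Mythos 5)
Your proof is correct: the paper states this bound without proof as the standard ``approximate form'' of Tur\'an's theorem, deduced from the exact result $ex(n,K_{r+1})=e(T_{n,r})$ cited just above it, which is exactly the reduction you make. Your Cauchy--Schwarz estimate $\sum_i n_i^2\ge n^2/r$ on the part sizes of $T_{n,r}$ is the usual way to finish, so there is nothing to add.
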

If the edge density of some graph $G$ on $n$ vertices exceeds the density of Tur\'an  graph $T_{n, r}$, which is $\frac{r-1}r$, %for even if only a little,
then a large amount of forbidden structures, i.e., $K_{r+1}$ as subgraphs of $G$, will emerge.
\begin{theorem}[\hspace{-0.01em}\cite{erdHos1983supersaturated}]\label{thm_graph_supersaturation}
For every $\epsilon>0$, there exist some $\delta=\delta(\epsilon)>0$ and integer $n_0=n_0(\epsilon)$ such that every graph on $n\ge n_0$ vertices with at least $(\frac{r-1}r+\epsilon)\binom n 2$ edges contains at least $\delta n^{r+1}$ copies of $K_{r+1}$ as a subgraph.
\end{theorem}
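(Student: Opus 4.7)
The plan is to run the standard sampling/averaging reduction from the exact Tur\'an bound in Theorem~\ref{coro_turan} to its quantitative strengthening. The key idea is: if a copy of $K_{r+1}$ is guaranteed whenever the edge density on a fixed moderate-size window exceeds the Tur\'an threshold, then a linear surplus in density on all of $V(G)$ forces a positive fraction of such windows to contain a copy, and counting incidences over all windows yields $\Theta(n^{r+1})$ copies in total.

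First, fix $\epsilon>0$ and choose a constant $k=k(\epsilon,r)$ large enough that
$$\Bigl(\tfrac{r-1}{r}+\tfrac{\epsilon}{2}\Bigr)\binom{k}{2}>ex(k,K_{r+1}).$$
This is possible by Theorem~\ref{coro_turan} for any $k$ exceeding a constant depending on $\epsilon$ and $r$, because the additive slack $\tfrac{\epsilon}{2}\binom{k}{2}$ dominates the difference between $(1-1/r)\binom{k}{2}$ and $e(T_{k,r})$. Consequently, every $k$-vertex graph with at least $(\tfrac{r-1}{r}+\tfrac{\epsilon}{2})\binom{k}{2}$ edges contains a copy of $K_{r+1}$.

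Next, let $G$ be a graph on $n\geq n_0(\epsilon)$ vertices with $e(G)\geq (\tfrac{r-1}{r}+\epsilon)\binom{n}{2}$. For a uniformly random $k$-subset $U\subseteq V(G)$, each edge of $G$ survives in $G[U]$ with probability $\binom{k}{2}/\binom{n}{2}$, so the expected value of $e(G[U])$ is at least $(\tfrac{r-1}{r}+\epsilon)\binom{k}{2}$. Call $U$ \emph{good} if $e(G[U])\geq (\tfrac{r-1}{r}+\tfrac{\epsilon}{2})\binom{k}{2}$. A two-case averaging (a good subset contributes at most $\binom{k}{2}$, a bad one strictly less than the threshold) forces the fraction of good subsets to be at least some $\gamma=\gamma(\epsilon)=\Theta(\epsilon)$. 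Each good $U$ then contributes at least one copy of $K_{r+1}$ by the choice of $k$.

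Finally, I double-count pairs (good $k$-subset, $K_{r+1}$ inside it). Each $K_{r+1}\subset G$ lies in exactly $\binom{n-r-1}{k-r-1}$ of the $\binom{n}{k}$ many $k$-subsets, so the number of copies of $K_{r+1}$ in $G$ is at least
$$\frac{\gamma\binom{n}{k}}{\binom{n-r-1}{k-r-1}}\;\geq\; \delta\,n^{r+1}$$
for a constant $\delta=\delta(\epsilon,r)>0$, as desired. The proof is almost entirely mechanical once the window size $k$ is chosen; the only genuinely quantitative step is the averaging that extracts the $\Theta(\epsilon)$ fraction of good windows, and this is where the dependence of $\delta$ on $\epsilon$ is pinned down. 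No further obstacle arises, since Theorem~\ref{coro_turan} already supplies the exact threshold needed to certify a $K_{r+1}$ inside each good window.
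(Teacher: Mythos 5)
Your argument is correct. Note that the paper does not prove this statement at all: Theorem~\ref{thm_graph_supersaturation} is quoted from Erd\H{o}s--Simonovits \cite{erdHos1983supersaturated} and used as a black box, so there is no in-paper proof to compare against. What you wrote is the standard sampling proof of that cited result, and each step checks out: the choice of $k$ with $k>1+2/\epsilon$ makes $(\tfrac{r-1}{r}+\tfrac{\epsilon}{2})\binom{k}{2}$ exceed the bound $\tfrac{r-1}{r}\cdot\tfrac{k^2}{2}\ge ex(k,K_{r+1})$ of Theorem~\ref{coro_turan}; the expectation computation $\mathbb{E}\,e(G[U])=e(G)\binom{k}{2}/\binom{n}{2}$ is exact; the two-case averaging gives the good fraction $\gamma\ge\epsilon/2$ (from $p\binom{k}{2}+(\tfrac{r-1}{r}+\tfrac{\epsilon}{2})\binom{k}{2}\ge\mathbb{E}\,e(G[U])$); and the final double count uses $\binom{n}{k}\big/\binom{n-r-1}{k-r-1}=\binom{n}{r+1}\big/\binom{k}{r+1}=\Theta(n^{r+1})$ with $k$ constant, so $\delta=\delta(\epsilon,r)>0$ exists once $n_0\ge k$ is taken large enough; this matches the statement, in which $r$ is treated as fixed.
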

%[\hspace{-0.01em}\cite{o2021towards}]

\section{Supersaturation problem of oddtown}\label{sec_supersatuation_oddtown}

\subsection{Proof of Theorem~\ref{thm_for_oddtown}}
We first prove the lower bound.
%We first give a proof for the lower bound part of Theorem~\ref{thm_for_oddtown}. That is,
\begin{lemma}\label{lemma_oddtown_lower_bound}
Let $n\ge s\ge 1$. Any odd-sized family $\mathcal A\subset 2^{[n]}$ with $|\mathcal A|\ge n+s$ must satisfy $op(\mathcal A)\ge s+2$.
\end{lemma}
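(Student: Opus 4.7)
The plan is to proceed by induction on $s \ge 1$, with the inductive step branching on whether the odd pair graph $H := H(\mathcal A)$ contains a vertex of degree at least two. I would assume $|\mathcal A| = n + s$ and suppose for contradiction that $op(\mathcal A) \le s + 1$.

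First, in the case that some $A \in \mathcal A$ has $d_H(A) \ge 2$, I would form $\mathcal A' := \mathcal A \setminus \{A\}$, which has size $n + (s - 1)$ and satisfies $op(\mathcal A') \le (s + 1) - 2 = s - 1$. For $s \ge 2$, the induction hypothesis applied at $s - 1$ forces $op(\mathcal A') \ge (s - 1) + 2 = s + 1$, a direct contradiction. For the base case $s = 1$, the bound $op(\mathcal A) \le 2$ forces $d_H(A) = 2$ and $op(\mathcal A') = 0$, so $\mathcal A'$ becomes a maximum oddtown family; then $\{\bm v_X : X \in \mathcal A'\}$ forms a basis of $\mathbb F_2^n$ with identity Gram matrix, and expressing $\bm v_A$ in this basis by reading off coefficients $(\bm v_A, \bm v_X) \pmod 2$ immediately gives $\bm v_A = \bm v_B + \bm v_C$, where $B, C$ are $A$'s two neighbors; thus $|A| \equiv |B| + |C| \equiv 0 \pmod 2$ contradicts $|A|$ odd.

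Next, if $H$ is a matching with $k := op(\mathcal A) \le s + 1$ edges $(B_i, B_i')_{i=1}^{k}$ and isolated vertices $C_1, \ldots, C_{m - 2k}$, then $\mathcal A_0 := \{C_j\} \cup \{B_i\}$ is oddtown of size $m - k$, so Berlekamp's theorem yields $k \ge s$. To exclude $k \in \{s, s+1\}$, I would introduce $\bm w_i := \bm v_{B_i} + \bm v_{B_i'}$ and verify by a short inner-product computation (using $|B_i|$ odd, $|B_i \cap B_i'|$ odd, and all other intersections among $\mathcal A_0$-members even) that $\bm w_i \in V_0^\perp$, where $V_0 := \mathrm{span}\{\bm v_A : A \in \mathcal A_0\}$ has dimension $m - k$ and hence $\dim V_0^\perp = n - m + k$. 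For $k = s$ this gives $V_0^\perp = 0$, hence $B_i = B_i'$, a contradiction. For $k = s + 1$, $\dim V_0^\perp = 1$ forces all $\bm w_i$ to equal a common nonzero $\bm w$; a second Gram-matrix calculation using $(\bm v_{B_j'}, \bm v_{B_j'}) = 1$ and $(\bm v_{B_j}, \bm v_{B_j'}) = 1$ promotes $\bm w$ from $V_0^\perp$ to $V^\perp$ where $V := \mathrm{span}(\mathcal A)$. Since $V \supseteq V_0$ has dimension $n - 1$ or $n$, either $V = V_0$ (forcing $\bm w \in V_0 \cap V_0^\perp = 0$ by nondegeneracy of the identity Gram matrix on the basis $\mathcal A_0$) or $V = \mathbb F_2^n$ (forcing $\bm w \in V^\perp = 0$); both give $\bm w = 0$, contradicting $B_1 \neq B_1'$.

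The main obstacle I anticipate is the subcase $k = s + 1$ of the matching case: the naive dimension count only yields $\dim V_0^\perp \ge 1$, which leaves room for a nontrivial common $\bm w$, so the key maneuver is to promote $\bm w$ from $V_0^\perp$ to the full orthogonal complement $V^\perp$ via the odd self-intersections of the $B_j'$'s, after which a short dimension argument on $V$ annihilates $\bm w$ in either branch.
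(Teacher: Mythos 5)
Your argument is correct --- I checked the computations it hinges on (the Gram-matrix expansion giving $\bm v_A=\bm v_B+\bm v_C$ in the $s=1$ case, the verification that each $\bm w_i=\bm v_{B_i}+\bm v_{B_i'}$ lies in $V_0^\perp$, the dimension counts $\dim V_0^\perp=k-s$, and the promotion of $\bm w$ to $V^\perp$ via $(\bm w,\bm v_{B_j'})=(\bm v_{B_j},\bm v_{B_j'})+(\bm v_{B_j'},\bm v_{B_j'})=0$) and they all go through --- but it takes a genuinely different, self-contained route. The paper's proof is a five-line induction on $s$: the base case is exactly O'Neill's Theorem~\ref{thm_old_oddtown}, cited as a black box, and the inductive step deletes a vertex of degree at least \emph{one} (such a vertex exists since $|\mathcal A|>n$ forces $op(\mathcal A)\ge 1$ by Berlekamp), which drops $op$ by at least $1$ while dropping $s$ by $1$ and already contradicts the hypothesis at $s-1$. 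You instead delete a vertex of degree at least two, which obliges you to handle the case where $H(\mathcal A)$ is a matching separately; that matching analysis (the oddtown family $\mathcal A_0$ built from one endpoint per edge plus the isolated vertices, the vectors $\bm w_i\in V_0^\perp$, and the two dimension arguments excluding $k=s$ and $k=s+1$), together with your Gram-matrix treatment of a degree-two vertex at $s=1$, amounts to an independent reproof of Theorem~\ref{thm_old_oddtown} rather than an appeal to it. What each buys: the paper's proof is far shorter but rests entirely on the external $s=1$ result, while yours is longer but self-contained, and the $V_0^\perp$ computation gives some structural insight into near-extremal configurations. Note also that your induction could be streamlined to the paper's form: if you allow deletion of any non-isolated vertex (one exists for every $s\ge1$ because $|\mathcal A|>n$), the hypothesis at $s-1$ already yields the contradiction, so your matching machinery is really only needed to establish the base case $s=1$.
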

\begin{proof}
We prove it by induction on the value of $s$. For the base case when $s=1$, the statement is true from Theorem~\ref{thm_old_oddtown}. Assume that the statement is true for all $s\le k$ for some $k>0$. Consider the case when $s=k+1$.

Suppose on the contrary, there exists some odd-sized family $\mathcal A$ on $[n]$ for some $n\ge k+1$ with $|\mathcal A|\ge n+k+1$ and $op(\mathcal A)\le k+2$. Since $op(\mathcal A)\ge 1$, we can choose a member $A\in\mathcal A$ such that $A$ is not an isolated vertex in $H(\mathcal A)$. Denote $\mathcal A'=\mathcal A\backslash\{A\}$, which is also an odd-sized collection on $[n]$, but with $|\mathcal A'|\ge n+k$ and $op(\mathcal A')\le op(\mathcal A)-1\le k+1$. This leads to a contradiction to the statement when $s=k$.
\end{proof}

It is left to give a construction of odd-sized family $\mathcal A$ with $|\mathcal A|=n+s$ and $op(\mathcal A)=s+2$ when $n\geq s+4$. We first give a construction when $n=s+4$.

\begin{construction}\label{cons_A_s}
For any integer $s\ge 1$, let $n=s+4$. Construct a family $\mathcal A_s$ of $2s+4$ odd-sized subsets of $[n]$ as follows.
\begin{itemize}
%\item[(1)] $|\mathcal A_s|=2s+4$.
\item[(1)] There are six special subsets $\{1, 2, 3\}$, $\{1, 2, 4\}$, $\{1, 2, 5\}$, $\{1, 3, 4\}$, $\{1, 3, 5\}$, $\{1, 4, 5\}$. We call the collection of those six subsets the center of $\mathcal A_s$, denoted by $\mathcal{C}$, which is irrelevant to the value of $s$.
\item[(2)] The remaining $2s-2$ subsets form $s-1$ pairs: $\{i+5\}$ and $\{ 2, 3, 4, 5, i+5\}$, $i\in [s-1]$.
\end{itemize}
\end{construction}

\begin{example}
When $s=3$, $n=7$, the family $\mathcal A_3$ consists of ten odd-sized subsets in $[7]$: six subsets in the center, i.e., $\{1, 2, 3\}$, $\{1, 2, 4\}$, $\{1, 2, 5\}$, $\{1, 3, 4\}$, $\{1, 3, 5\}$ and $\{1, 4, 5\}$, and four additional subsets forming two pairs, i.e., $\{6\}$, $\{2, 3, 4, 5, 6\}$ and $\{7\}$, $\{2, 3, 4, 5, 7\}$.
\end{example}

%Next: use some lemmas to illustrate the properties of $\mathcal A_s$. Then form the desired collections.

Next, we show that the odd pair number of $\mathcal A_s$ is $s+2$.

\begin{lemma}\label{lemma_A_s_is_good}
For any integer $s\ge 1$, %$\mathcal A_s$ is an odd-sized family on $[s+4]$ with cardinality $2s+4$ and
$op(\mathcal A_s)=s+2$.
\end{lemma}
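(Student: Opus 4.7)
The plan is to verify the claimed equality by a direct case analysis of all pairs in $\mathcal A_s$, grouped according to which of the two parts (the center $\mathcal C$ or the $s-1$ pairs) each member belongs to. The total contribution will work out to exactly $3+(s-1)=s+2$.

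First I would handle the center $\mathcal C$. Every $C\in\mathcal C$ has the form $\{1\}\cup T_C$ where $T_C$ is a $2$-subset of $\{2,3,4,5\}$, and the map $C\mapsto T_C$ is a bijection between $\mathcal C$ and $\binom{\{2,3,4,5\}}{2}$. For distinct $C,C'\in\mathcal C$, $|C\cap C'|=1+|T_C\cap T_{C'}|$, which is odd precisely when $T_C$ and $T_{C'}$ are disjoint. Inside a $4$-element ground set there are exactly three ways to partition into two disjoint pairs ($\{2,3\}\sqcup\{4,5\}$, $\{2,4\}\sqcup\{3,5\}$, $\{2,5\}\sqcup\{3,4\}$), so $\mathcal C$ contributes exactly $3$ odd pairs.

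Next I would look at the $2(s-1)$ remaining sets, all contained in $\{2,3,4,5\}\cup\{6,\dots,s+4\}$, so they are disjoint from the element $1$ that every $C\in\mathcal C$ contains. For any $C\in\mathcal C$, $C\cap\{i+5\}=\emptyset$ (even) and $C\cap\{2,3,4,5,i+5\}=T_C$ has size $2$ (even). Hence the center contributes no odd pair with anything outside it. Among the remaining sets themselves, two distinct singletons are disjoint, two distinct $5$-sets intersect in $\{2,3,4,5\}$ (size $4$, even), and a singleton $\{i+5\}$ meets a $5$-set $\{2,3,4,5,j+5\}$ oddly iff $i=j$. Thus the $s-1$ matched pairs $(\{i+5\},\{2,3,4,5,i+5\})$ contribute exactly $s-1$ odd pairs and nothing else.

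Summing, $op(\mathcal A_s)=3+(s-1)=s+2$. There is no genuine obstacle: the argument is a short bookkeeping exercise, and the only thing to be careful about is to confirm that the construction really produces $2s+4$ distinct subsets (the $\mathcal C$-members and the pair-members live on disjoint ``active'' coordinate sets except for $\{2,3,4,5\}$, and distinctness among the $5$-sets follows from the distinctness of the indices $i+5$).
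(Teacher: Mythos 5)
Your proof is correct and follows essentially the same route as the paper's: a case analysis of pairs according to whether they lie in the center $\mathcal C$ or among the $s-1$ matched pairs, yielding $3+(s-1)=s+2$. The only difference is that you spell out the count $op(\mathcal C)=3$ via the bijection $C\mapsto T_C$ with the $2$-subsets of $\{2,3,4,5\}$, which the paper leaves as ``easy to check.''
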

\begin{proof}
%The cardinality of $\mathcal A_s$ is from Construction~\ref{cons_A_s}(1). Moreover, all subsets in the center are with size 3, while each pair out of the center consists of one set with size 1 and one set with size 5. So $\mathcal A_s$ is of odd-sized.
It is easy to check that $op(\mathcal{C})=3$, and the two subsets in each of the $s-1$ pairs out of the center have odd-sized intersection. We claim that no other two members in $\mathcal A_s$ can have odd-sized intersections, and hence $op(\mathcal A_s)=op(\mathcal{C})+(s-1)=s+2$. There are only two possible cases need to be checked: (1) exactly one member is in the center; (2) the two members are from different pairs.

For the first case, without loss of generality, suppose $A_1\in \mathcal{C}$ and $A_2$ is from the $i$th pair for some $i\in [s-1]$. Then $A_1=\{1, x, y\}$ for some $x\ne y\in [2,  5]$. If $A_2={i+5}$, then $A_1\cap A_2=\emptyset$. If $A_2=\{2, 3, 4, 5, i+5\}$, then $A_1\cap A_2=\{x, y\}$. Both situations lead to even-sized intersections.

For the second case, there exist some $i\ne j\in [s-1]$ such that $A_1$ and $A_2$ are in the $i$th and the $j$th pair, respectively. Then $A_1$ is one of $\{i+5\}$ and $\{2, 3, 4, 5, i+5\}$, while $A_2$ is one of $\{j+5\}$ and $\{2, 3, 4, 5, j+5\}$. All of the four situations lead to even-sized intersections.
%
%To sum up, if distinct $A_1$, $A_2$ form an odd-sized intersection, then they must be in the same pair, or they are all in the center. So the claim is verified.
\end{proof}
%\begin{remark}\label{remark_A_s_in_graph_sight}
%If we see $\mathcal A_s$ in the sight of graphs, by the above analysis, $G(\mathcal A_s)$ is a perfect matching on $2s+4$ vertices.
%\end{remark}

From the proof of Lemma~\ref{lemma_A_s_is_good}, we see that the odd pair graph $H(\mathcal A_s)$ is a perfect matching on $2s+4$ vertices, which does not contain an extremal oddtown family of size $s+4$. Now we extend the result in Lemma~\ref{lemma_A_s_is_good} to any $n\geq s+4$.
\begin{lemma}\label{lemma_s+2_is_achivable}
For any fixed integers $s\ge 1$ and $n\ge s+4$, there exists an odd-sized family $\mathcal A\subset 2^{[n]}$ with $|\mathcal A|=n+s$ and $op(\mathcal A)=s+2$.
\end{lemma}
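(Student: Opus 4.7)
The plan is to bootstrap from Construction~\ref{cons_A_s}. Notice that when $n=s+4$, the family $\mathcal{A}_s$ already has the right parameters: its size is $2s+4 = n+s$ and, by Lemma~\ref{lemma_A_s_is_good}, $op(\mathcal{A}_s)=s+2$. So the $n=s+4$ case is immediate, and the task reduces to extending $\mathcal{A}_s$ to an ambient ground set of size $n > s+4$ while adding exactly $n-s-4$ new members and introducing no new odd-sized intersections.

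The natural choice is to append the singletons $\{s+5\},\{s+6\},\dots,\{n\}$ to $\mathcal{A}_s$. Each such singleton is odd-sized, so the resulting family $\mathcal{A}$ is still odd-sized, and its cardinality is
\[
|\mathcal{A}| \;=\; (2s+4) + (n-s-4) \;=\; n+s,
\]
as required. Moreover, the new singletons are distinct from the singletons $\{6\},\dots,\{s+4\}$ already contained in $\mathcal{A}_s$, so no repetitions occur.

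It remains to verify that $op(\mathcal{A}) = op(\mathcal{A}_s) = s+2$, i.e., that no pair of members involving a new singleton has odd intersection. The check is routine: every member of $\mathcal{A}_s$ is a subset of $[s+4]$, so its intersection with $\{i\}$ for any $i \ge s+5$ is empty; and any two distinct new singletons also intersect in the empty set. Both intersections have even size $0$, so $op(\mathcal{A})=op(\mathcal{A}_s)=s+2$ by Lemma~\ref{lemma_A_s_is_good}. Combined with the matching lower bound in Lemma~\ref{lemma_oddtown_lower_bound}, this completes the proof of Theorem~\ref{thm_for_oddtown}. There is no serious obstacle here; the only minor thing to be careful about is confirming that the new singletons do not coincide with singletons already present in the center or the paired part of $\mathcal{A}_s$, which is clear since those use indices in $[s+4]$.
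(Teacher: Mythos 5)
Your proposal is correct and matches the paper's own proof: both extend $\mathcal{A}_s$ (whose members all lie in $[s+4]$) by the singletons $\{i\}$ for $i\in[s+5,n]$, verify the cardinality count $(2s+4)+(n-s-4)=n+s$, and observe that the added singletons create no new odd intersections, so $op(\mathcal{A})=op(\mathcal{A}_s)=s+2$ by Lemma~\ref{lemma_A_s_is_good}.
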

\begin{proof}
Consider $\mathcal A$ consisting of the family $\mathcal A_s$ together with the following subsets: $\{i\}$, $i\in [s+5,n]$. Then  $\mathcal A$ is an odd-sized family on $[n]$ with cardinality $(2s+4)+(n-s-4)=n+s$. It is easy to see those new-added subsets with single elements do not add new odd-sized intersections. So by Lemma~\ref{lemma_A_s_is_good}, $op(\mathcal A)=op(\mathcal A_s)=s+2$.
\end{proof}

Combining Lemma~\ref{lemma_oddtown_lower_bound} and Lemma~\ref{lemma_s+2_is_achivable}, Theorem~\ref{thm_for_oddtown} is proved.

%\subsection{Another construction}
We remark that the construction in the proof of Lemma~\ref{lemma_s+2_is_achivable} is not the only extremal construction under the equivalence of permutation. There is another method to construct good odd-sized families with large size and small odd pair number based on the existence of eventown families.
\begin{construction}~\label{cons_A_m_I}
For given integers $n> m\ge 1$, divide $[n]$ into two parts $[m]$ and $[m+1, n]$. Let $\mathcal E$ be an eventown family on $[m]$. Define  $\mathcal A(\mathcal E, m,n)$ as the product family of $\mathcal E$ and $\{\{i\}: i\in[m+1, n]\}$. That is,
$$\mathcal A(\mathcal E, m,n)=\{E\cup \{i\}: E\in\mathcal E, i\in [m+1, n]\}.$$
\end{construction}
Then $\mathcal A(\mathcal E, m,n)$ is a family on $[n]$ of $|\mathcal E|(n-m)$ odd-sized subsets. For any two different sets $A_1= E_1\cup \{i_1\}$ and $A_2= E_2\cup \{i_2\}$ from $\mathcal A(\mathcal E, m,n)$, where $E_1, E_2\in \mathcal E$ and $i_1, i_2\in[m+1, n]$, the size $|A_1 \cap A_2|$ is odd if and only if $i_1= i_2$. Thus, $op(\mathcal A(\mathcal E, m,n))=\binom {|\mathcal E|} 2 (n-m)$.

For any $n\ge 3$, if we set $\mathcal E=\{\emptyset, \{1, 2\}\}$, $\mathcal A(\mathcal E, 2, n)$ is with size $2n-4$ and $op(\mathcal A(\mathcal E, 2, n))=n-2$, which meets Lemma~\ref{lemma_oddtown_lower_bound} when $s=n-4$. Moreover, for any $1\le s\le n-4$, $\mathcal A(\mathcal E, 2, n)\backslash\{\{1, 2, i\}: i> s+4\}$ contains $n+s$ subsets and its odd pair number  is $s+2$. So it is  another example achieving the lower bound in Lemma~\ref{lemma_oddtown_lower_bound}. This example is not equivalent to the constructions from Lemma~\ref{lemma_s+2_is_achivable}, because of the different size distributions of the two families.  Construction~\ref{cons_A_m_I} will be used to prove Theorem~\ref{theorem_oddtown_approximation}.

\subsection{Asymptotic result of oddtown}
The lower bound of Theorem~\ref{theorem_oddtown_approximation} comes from the following lemma.
\begin{lemma}\label{lemma_odd_approx_lower}
For any odd-sized family $\mathcal A\subset 2^{[n]}$ with $|\mathcal A|=n+s$, we have  \begin{equation}\label{eq-app}
op(\mathcal A)\ge \binom{\lfloor s\slash n\rfloor+1}2 n+(\lfloor s\slash n\rfloor+1)(s-n\lfloor s\slash n\rfloor).
 \end{equation}
\end{lemma}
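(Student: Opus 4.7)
The plan is to reformulate the lemma as a graph-theoretic problem on the odd pair graph $H(\mathcal A)$ and then apply Tur\'an's theorem to its complement. Since $|\mathcal A|=N:=n+s$ and $op(\mathcal A)=e(H(\mathcal A))$, the task is to lower bound the edge count of $H(\mathcal A)$. The crucial observation is that any independent set $I\subseteq \mathcal A$ in $H(\mathcal A)$ consists of odd-sized subsets with pairwise even intersections; in other words, $I$ is itself an oddtown family on $[n]$. By the classical Berlekamp--Graver bound, $|I|\le n$, and therefore $\alpha(H(\mathcal A))\le n$.

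Equivalently, the complement $\overline{H(\mathcal A)}$ contains no $K_{n+1}$. I would then invoke the exact Tur\'an theorem (cited in the Preliminary) to conclude $e(\overline{H(\mathcal A)})\le e(T_{N,n})$, which gives
$$op(\mathcal A)=\binom N 2 - e(\overline{H(\mathcal A)})\ge \binom N 2 - e(T_{N,n}).$$
That is, $op(\mathcal A)$ is at least the number of within-part non-edges of the Tur\'an graph $T_{N,n}$.

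All that remains is routine arithmetic. Writing $s=qn+r$ with $q=\lfloor s/n\rfloor$ and $0\le r<n$, one has $N=(q+1)n+r$, so $T_{N,n}$ partitions its $N$ vertices into $r$ parts of size $q+2$ and $n-r$ parts of size $q+1$. Counting within-part non-edges yields
$$r\binom{q+2}{2}+(n-r)\binom{q+1}{2}=\binom{q+1}{2}n+(q+1)r,$$
which matches~\eqref{eq-app} exactly. No substantial obstacle is expected: the only nontrivial step is recognizing that the oddtown extremal bound translates into an upper bound on $\alpha(H(\mathcal A))$, after which Tur\'an's theorem delivers the tight lower bound immediately. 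Tightness, not strictly needed here, would come from a Tur\'an-graph-like construction in the spirit of Construction~\ref{cons_A_m_I}, by pairing a maximum eventown base with disjoint-support singletons so that $H(\mathcal A)$ becomes a disjoint union of cliques matching the Tur\'an partition.
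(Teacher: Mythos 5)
Your proof is correct, and it takes a genuinely different route from the paper. You observe that an independent set of the odd pair graph $H(\mathcal A)$ is an oddtown family, so $\alpha(H(\mathcal A))\le n$ by the Berlekamp--Graver bound, hence $\overline{H(\mathcal A)}$ is $K_{n+1}$-free; the exact Tur\'an theorem then gives $e(\overline{H(\mathcal A)})\le e(T_{N,n})$ with $N=n+s$, so $op(\mathcal A)\ge\binom N2-e(T_{N,n})$, and your arithmetic correctly shows that this count of within-part pairs of $T_{N,n}$ equals $\binom{q+1}{2}n+(q+1)(s-qn)$ with $q=\lfloor s/n\rfloor$, which is exactly~\eqref{eq-app}. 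The paper instead argues by a greedy peeling: it repeatedly extracts a maximum oddtown subfamily $\mathcal A_1,\mathcal A_2,\dots$ (each of size at most $n$), notes that by maximality every set in a later layer sends at least one edge to each earlier layer, and sums $e(\mathcal A_i,\mathcal A\setminus\cup_{j\le i}\mathcal A_j)\ge|\mathcal A\setminus\cup_{j\le i}\mathcal A_j|$ to get $s+(s-n)+\cdots+(s-n\lfloor s/n\rfloor)$, the same quantity. Your argument is shorter and makes the structural reason for the bound transparent (it is literally the complement of a Tur\'an extremal configuration), at the cost of invoking the exact Tur\'an theorem; the paper's peeling argument is more elementary, needing only the oddtown extremal bound, and the same peeling template is reused verbatim in the eventown argument (Lemma~\ref{lemma_size_o_max_subeventown}), which is presumably why the authors chose it. Your closing remark on tightness is not needed for the lemma and is handled separately in the paper via Construction~\ref{cons_A_m_I}.
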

\begin{proof}
We define a sequence of disjoint subcollections of $\mathcal A$: $\mathcal A_1, \mathcal A_2, \ldots$ recursively. First let $\mathcal A_1$ be a maximum oddtown subfamily of $\mathcal A$, i.e., the largest subfamily of $\mathcal A$ which satisfies the oddtown property. Let $\mathcal A_2$ be the maximum oddtown subfamily in $\mathcal A\setminus \mathcal A_1$. Then for any $i\ge 2$, as long as $\cup_{j\in [i]}\mathcal A_j\not=\mathcal A$, define $\mathcal A_{i+1}$ as the maximum oddtown subfamily in $\mathcal A\backslash \cup_{j\in [i]}\mathcal A_j$. This process will terminate when some $\mathcal A_r\neq\emptyset$ {satisfies} $\cup_{j\in [r]}\mathcal A_j=\mathcal A$. Trivially $r\le|\mathcal A|$. Since each $\mathcal A_i$ is an oddtown subfamily, $|\mathcal A_i|\le n$ and $r\ge {\lceil s\slash n\rceil}+1$.

Because of the maximality of each $\mathcal A_i$, for any $j\in [r]$ with $j>i$, there exists at least one edge from $A$ to $\mathcal A_i$ in $H(\mathcal A)$ for any $A\in\mathcal A_j$. This means in $H(\mathcal A)$, $e(\mathcal A_i, \mathcal A_j)\ge |\mathcal A_j|$. As a consequence,
\begin{equation*}
\begin{aligned}
op(\mathcal A)&\ge e(\mathcal A_1, \mathcal A\backslash\mathcal A_1)+ e(\mathcal A_2, \mathcal A\backslash(\mathcal A_1\cup \mathcal A_2))+\cdots +e(\mathcal A_{r-1}, \mathcal A\backslash \cup_{j\le r-1}\mathcal A_j)\\
&\ge |\mathcal A\backslash\mathcal A_1|+|\mathcal A\backslash(\mathcal A_1\cup \mathcal A_2)|+\cdots+ |\mathcal A\backslash \cup_{j\le r-1}\mathcal A_j|\\
&\ge s+ (s-n)+\cdots +(s-n\lfloor s\slash n\rfloor)\\
&=\binom{\lfloor s\slash n\rfloor+1}2 n+(\lfloor s\slash n\rfloor+1)(s-n\lfloor s\slash n\rfloor).
\end{aligned}
\end{equation*}
\end{proof}
\begin{proof}[Proof of Theorem~\ref{theorem_oddtown_approximation}]
%When $s$ satisfies that $n=o(s)$ and $\log s=o(n)$,
%$$\binom{\lfloor s\slash n\rfloor+1}2 n+(\lfloor s\slash n\rfloor+1)(s-n\lfloor s\slash n\rfloor)=\frac12s^2\slash n+O(s).$$
%For the tightness, choose $m=2\lceil\log s-(1-\epsilon)\log n \rceil$ for some arbitrary small $\epsilon$, which is an integer satisfying $2^{\lfloor m\slash2\rfloor}\ge (s+n)\slash(n-m)$. By the extremal size of an  eventown family, there exists an eventown family $\mathcal E_0\subset 2^{[m]}$ with size  $\lceil(s+n)\slash(n-m)\rceil$. Since $\log s=o(n)$, $m=o(n)$. Consider the family $\mathcal A(\mathcal E_0, m, n)$ from Construction~\ref{cons_A_m_I}. Then $|\mathcal A(\mathcal E_0, m, n)|=\lceil(s+n)\slash(n-m)\rceil(n-m)\ge s+n$, and we can choose $\mathcal A$ as a subfamily of it with size $s+n$. As for the odd pair number,
%$$op(\mathcal A)\le op(\mathcal A(\mathcal E_0, m, n))=\binom{\lceil(s+n)\slash(n-m)\rceil}2(n-m)= \frac12s^2\slash n+ O(s).$$
When $s=cn+o(n)$ for some constant $c$,  %by Eq. (\ref{eq-app}),

%{\color{red}$$  M_o(s, n)=\binom{\lfloor c\rfloor+1}2 n+(\lfloor c\rfloor+1)(c-\lfloor c\rfloor)n+ o(n).$$}

$$\binom{\lfloor s\slash n\rfloor+1}2 n+(\lfloor s\slash n\rfloor+1)(s-n\lfloor s\slash n\rfloor)=\binom{\lfloor c\rfloor+1}2 n+(\lfloor c\rfloor+1)(c-\lfloor c\rfloor)n+ o(n).$$
For the tightness, choose $m=2\lceil\log{(\lfloor c\rfloor+3)}\rceil$, which is a constant integer satisfying $2^{\lfloor m\slash2\rfloor}\ge\lfloor c\rfloor+3.$ By the extremal size of an eventown family, there exists an eventown subfamily $\mathcal E_1\subset 2^{[m]}$ with size $\lfloor c\rfloor+3.$ Consider the family $\mathcal A(\mathcal E_1, m, n)$ from Construction~\ref{cons_A_m_I}. By the definition of $m$, $ |\mathcal A(\mathcal E_1, m, n)|=(\lfloor c\rfloor+3)(n-m)\geq s+n$. Choose our family $\mathcal A$ as a subfamily of $\mathcal A(\mathcal E_1, m, n)$ with size $s+n$, such that
% for any $i, j\in [m+1, n]$, $|\{A\in\mathcal A: i\in A\}|- |\{A\in\mathcal A: j\in A\}|\in\{0, \pm1\}.$ That is
each element in $[m+1,n]$ appears almost equally often in $\mathcal A$. Note that $|\mathcal A(\mathcal E_1, m, n)|-|\mathcal A|= n+ (1+\lfloor c\rfloor-c)n+o(n)$ and $m$ is a small constant. This means when $c>\lfloor c\rfloor$, there are in total $(c-\lfloor c\rfloor)n-o(n)$ elements in $[m+1, n]$ each appearing  $\lfloor c\rfloor+2$ times in  $\mathcal A$ and $(1+\lfloor c\rfloor-c)n+o(n)$ elements in $ [m+1, n]$ each appearing  $\lfloor c\rfloor+1$ times in  $\mathcal A$.
{When $c=\lfloor c\rfloor$, there are $n-o(n)$ elements each appearing  $\lfloor c\rfloor+1$ times, and $o(n)$ elements each appearing $\lfloor c\rfloor+2$ times if $|\mathcal A(\mathcal E_1, m, n)|-|\mathcal A|<2(n-m)$, or $\lfloor c\rfloor$ times if $|\mathcal A(\mathcal E_1, m, n)|-|\mathcal A|>2(n-m)$.}
Since subsets in $\mathcal A$ only have odd intersections with the subsets in $\mathcal A$ sharing the same element in $[m+1, n]$, for both cases,
\begin{equation*}
\begin{aligned}
op(\mathcal A)&=\binom{\lfloor c\rfloor+2}2(c-\lfloor c\rfloor)n+ \binom{\lfloor c\rfloor+1}2(1+\lfloor c\rfloor-c)n+o(n)\\
&=\binom{\lfloor c\rfloor+1}2 n+(\lfloor c\rfloor+1)(c-\lfloor c\rfloor)n+ o(n).
\end{aligned}
\end{equation*}
\end{proof}
By using the same analysis as in the proof of Theorem~\ref{theorem_oddtown_approximation}, we can give more asymptotic results for $M_o(s, n)$ for much larger $s$ by choosing some proper $m$. We list some results here and omit the proofs for brevity.
\begin{itemize}
\item As long as $s=o(n^2\slash\log n)$ and $n=o(s)$, $M_o(s, n)=\frac12s^2\slash n+\frac12 s+o(s)$.
\item As long as $\log s=o(n)$ and $n=o(s)$, $M_o(s, n)=\frac12s^2\slash n+o(s^2\slash n)$.
\end{itemize}
As one can see, more restrictions on $s$ lead to a more accurate result.

\section{Supersaturation problem of eventown}\label{sec_supersaturation_eventown}

%\subsection{Small $s$ case}
%In this section, we prove Theorem~\ref{thm_for_eventown}. In the proof, we will use some standard asymptotic notations like $o$, $O$ and $\Theta$ to compare two functions. All comparisons are made when the size $n$ of the ground set goes to infinity, and all logarithms are under base $2$ by default.
The first part of this section devotes to prove Theorem~\ref{thm_for_eventown}.
Motivated by the proof strategy in \cite{o2021towards}, for a given even-sized family $\mathcal A$ on $[n]$, we pay special attention to the maximum eventown subfamily $\mathcal A'$ of $\mathcal A$, i.e., the largest subfamily of $\mathcal A$ which satisfies eventown property. Then $\mathcal A'$ is an independent set of $H(\mathcal A)$ with maximum size.  Further, any independent set of $H(\mathcal A)$ is an eventown subfamily of $\mathcal A$, and vise versa. We first show that if $|\mathcal A'|$ is either too large or too small, $\mathcal A$ must have large odd pair number. For convenience, let $N:=2^{\lfloor n\slash 2\rfloor}+s$.
\begin{lemma}\label{lemma_size_o_max_subeventown}
Let $n$ be a positive integer and $s\in [2^{\lfloor n\slash2\rfloor}- 2^{\lfloor n\slash4\rfloor}]$ satisfying $\lfloor\frac n 2\rfloor> 2\log(s+1)$. Let $\mathcal A$ be an even-sized family on $[n]$ with $|\mathcal A|=N$ and $\mathcal A'$ be its maximum eventown subfamily. If $|\mathcal A'|\ge 2^{\lfloor n\slash 2\rfloor-1}+s$ or $|\mathcal A'|\le \lceil\frac N {s+2}\rceil$, then $op(\mathcal A)\ge s\cdot 2^{\lfloor n\slash 2\rfloor-1}$.
\end{lemma}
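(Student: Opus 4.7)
The plan is to split the argument into the two regimes prescribed by the hypothesis, treating the ``large $\mathcal A'$'' case by linear algebra and the ``small $\mathcal A'$'' case by a Tur\'an-type edge count. Throughout, note that a maximum eventown subfamily of $\mathcal A$ is precisely a maximum independent set of the odd pair graph $H(\mathcal A)$.

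In the first case $|\mathcal A'|\ge 2^{\lfloor n/2\rfloor-1}+s$, I would set $V=\mathrm{span}_{\mathbb F_2}\{\bm v_B:B\in\mathcal A'\}\subseteq\mathbb F_2^n$. The eventown property makes $V$ totally isotropic under the standard bilinear form, so $|V|\le 2^{\lfloor n/2\rfloor}$. For each $A\in\mathcal A\setminus\mathcal A'$, the map $\phi_A:V\to\mathbb F_2$ given by $\phi_A(\bm v)=(\bm v_A,\bm v)$ is a linear functional; it cannot be identically zero, for otherwise $\bm v_A\in V^\perp$ together with $(\bm v_A,\bm v_A)\equiv|A|\equiv 0\pmod{2}$ would make $\mathcal A'\cup\{A\}$ a strictly larger eventown family, contradicting the maximality of $\mathcal A'$. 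Consequently $\ker\phi_A$ is a hyperplane of $V$, and $A$ has at least $|\mathcal A'|-|V|/2\ge|\mathcal A'|-2^{\lfloor n/2\rfloor-1}$ odd-pair neighbors in $\mathcal A'$. Summing over $A\in\mathcal A\setminus\mathcal A'$ yields
\[
op(\mathcal A)\ \ge\ (N-|\mathcal A'|)\bigl(|\mathcal A'|-2^{\lfloor n/2\rfloor-1}\bigr).
\]
As $|\mathcal A'|$ ranges over $[2^{\lfloor n/2\rfloor-1}+s,\,2^{\lfloor n/2\rfloor}]$, the right-hand side is a concave quadratic taking the common value $s\cdot 2^{\lfloor n/2\rfloor-1}$ at both endpoints, so the minimum on this interval is exactly $s\cdot 2^{\lfloor n/2\rfloor-1}$.

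In the second case $|\mathcal A'|\le k:=\lceil N/(s+2)\rceil$, the complement $\overline{H(\mathcal A)}$ has clique number at most $k$ and hence contains no $K_{k+1}$. Applying Theorem~\ref{coro_turan} gives
\[
op(\mathcal A)\ =\ \binom{N}{2}-e\bigl(\overline{H(\mathcal A)}\bigr)\ \ge\ \binom{N}{2}-\Bigl(1-\frac{1}{k}\Bigr)\frac{N^2}{2}\ =\ \frac{N(N-k)}{2k}.
\]
Using $k\le N/(s+2)+1$, substituting $N=2^{\lfloor n/2\rfloor}+s$ and setting $M=2^{\lfloor n/2\rfloor-1}$, the required inequality $N(N-k)/(2k)\ge sM$ reduces after elementary algebra to the polynomial inequality $4M^2-2(s+2)M+(s^3-2s)\ge 0$. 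Viewing this as a quadratic in $M$ with positive leading coefficient, a short discriminant check shows that it holds automatically for all $s\ge 2$, while for $s=1$ its larger root lies below $2$. The hypothesis $\lfloor n/2\rfloor>2\log(s+1)$ forces $M>(s+1)^2/2$, which is comfortably enough in either regime.

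The main obstacle will be Case 2. Conceptually it is nothing more than Tur\'an's theorem applied to the complement graph, but the ceiling in $k=\lceil N/(s+2)\rceil$ introduces small additive slack that has to be absorbed in order to recover the exact bound $s\cdot 2^{\lfloor n/2\rfloor-1}$, and this absorption is exactly where the hypothesis $\lfloor n/2\rfloor>2\log(s+1)$ is used. Case 1, by contrast, is essentially a clean linear-algebra estimate combined with a one-variable convexity observation; its only delicate step is the maximality-based argument forcing $\phi_A\not\equiv 0$.
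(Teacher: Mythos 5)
Your proof is correct, and it splits into the same two regimes as the paper, but the second regime is handled by a genuinely different argument. Case 1 is essentially identical to the paper's: span the characteristic vectors of $\mathcal A'$, use total isotropy to get $|V|\le 2^{\lfloor n/2\rfloor}$, use maximality of $\mathcal A'$ to force $\phi_A\not\equiv 0$ (this also silently disposes of $A=\emptyset$, which the paper treats explicitly), conclude each $A\in\mathcal A\setminus\mathcal A'$ has at least $|\mathcal A'|-2^{\lfloor n/2\rfloor-1}$ odd neighbours in $\mathcal A'$, and minimize the concave quadratic $(N-t)\bigl(t-2^{\lfloor n/2\rfloor-1}\bigr)$ over $t\in[2^{\lfloor n/2\rfloor-1}+s,\,2^{\lfloor n/2\rfloor}]$, whose value at both endpoints is $s\cdot 2^{\lfloor n/2\rfloor-1}$. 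In Case 2 the paper does not invoke Tur\'an at all: it greedily peels off maximum eventown subfamilies $\mathcal A_1=\mathcal A',\mathcal A_2,\dots$, each of size at most $\alpha=\lceil N/(s+2)\rceil$, notes that every vertex outside $\cup_{j\le i}\mathcal A_j$ sends an edge into $\mathcal A_i$, and sums $(N-\alpha)+(N-2\alpha)+\cdots+(N-(s+1)\alpha)$, finishing with the estimate $s^2+N-(s+1)\alpha>0$, which is where $\lfloor n/2\rfloor>2\log(s+1)$ enters. You instead bound the independence number of $H(\mathcal A)$ by $k=\lceil N/(s+2)\rceil$, apply Theorem~\ref{coro_turan} to the $K_{k+1}$-free complement to get $op(\mathcal A)\ge N(N-k)/(2k)$, and use the hypothesis only to absorb the ceiling; your reduction to $4M^2-2(s+2)M+(s^3-2s)\ge 0$ is correct, the discriminant $4(-4s^3+s^2+12s+4)$ is indeed nonpositive for $s\ge 2$, and for $s=1$ the larger root $(3+\sqrt{13})/4<2<M$ since $M>(s+1)^2/2$. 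The two routes are morally the same count (the greedy peeling is itself a proof of a Tur\'an-type bound), but yours outsources the combinatorics to a quoted theorem and yields a slightly stronger constant (roughly $N/k\approx s+2$ in place of $s+1$ per unit of $N-k$), at the price of a messier closing computation, whereas the paper's peeling is self-contained and its final inequality is a one-line check.
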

\begin{proof} Let $t=|\mathcal A'|$, and without loss of generality assume that $\mathcal A'=\{A_1, A_2, \ldots A_t\}$.

First we consider the case $t\ge 2^{\lfloor n\slash 2\rfloor-1}+s$. For each $A_i$, write $\bm v_i=\bm v_{A_i}$ for short, that is  the characteristic vector of $A_i$ in $\mathbb F_2^n$. Consider $W=span(\bm v_1, \bm v_2, \ldots, \bm v_t)$, which is a subspace of $\mathbb F_2^n$. Since $\mathcal A'$ is an eventown family on $[n]$, the inner product of any two vectors in $W$ is zero, and hence $W\subset W^\perp$. By the fact that $\dim(W)+\dim(W^\perp)=\dim(\mathbb F_2^n)=n$, we have $\dim(W)\le \lfloor n\slash 2\rfloor$. However, the subspace $W$ has at least $t> 2^{\lfloor n\slash 2\rfloor-1}$  different vectors, so $\dim(W)= \lfloor n\slash 2\rfloor$.

Note that $\emptyset\not\in\mathcal A\backslash \mathcal A'$, otherwise  $\mathcal A'\cup \{\emptyset\}$ is a larger eventown subfamily of $\mathcal A$.
So for any $A\in \mathcal A\backslash \mathcal A'$,  its characteristic vector $\bm v_A$ is not zero. For any $A_i\in \mathcal A'$, $|A\cap A_i|$ is odd if and only if $(\bm v_A, \bm v_i)=1$. Since $\mathcal A'\cup\{A\}$ is no longer an eventown family, $W\not\subset \bm v_A^\perp$ and hence $\dim(W\cap\bm v_A^\perp)=\dim(W)-1$. Let $N(A)$ denote the number of elements in $\mathcal A$ which has odd-sized intersection with $A$. Then $|N(A)|\ge |\mathcal A'|-|W\cap \bm v_A^\perp| {\ge} t-2^{\lfloor n\slash 2\rfloor-1}$, and %Note that $N(A)$ is exactly the neighborhood of vertex $A$  in the odd pair graph $G(\mathcal A)$.
\begin{equation}
op(\mathcal A)\ge \sum_{A\in\mathcal A\backslash \mathcal A'}|N(A)|\geq(N-t)(t-2^{\lfloor n\slash 2\rfloor-1}).
\end{equation}
Since $t\ge 2^{\lfloor n\slash 2\rfloor-1}+s$ and $t\le 2^{\lfloor n\slash 2\rfloor}$ (the latter is from that $\mathcal A'$ is an eventown family), $op(\mathcal A)\ge s\cdot 2^{\lfloor n\slash 2\rfloor-1}$.

Next we consider the case $t\le \lceil\frac N {s+2}\rceil:=\alpha$. Since $\lfloor\frac n 2\rfloor> 2\log(s+1)$, we have $(s+1)\alpha<N \leq (s+2)\alpha$. Then $\lceil\frac N \alpha\rceil= s+2$.

Similar to the process in the proof of Lemma~\ref{lemma_odd_approx_lower}, we define a sequence of disjoint subcollections of $\mathcal A$: $\mathcal A_1, \mathcal A_2, \ldots$ recursively. First let $\mathcal A_1=\mathcal A'$, and let $\mathcal A_2$ be the maximum eventown subfamily in $\mathcal A\setminus \mathcal A'$. Then for any $i\ge 2$, as long as $\cup_{j\in [i]}\mathcal A_j\not=\mathcal A$, define $\mathcal A_{i+1}$ as the maximum eventown subfamily in $\mathcal A\backslash \cup_{j\in [i]}\mathcal A_j$. This process will terminate when some $\mathcal A_r\neq\emptyset$ {satisfies} $\cup_{j\in [r]}\mathcal A_j=\mathcal A$. Trivially $r\le|\mathcal A|$. Since each $\mathcal A_i, i\ge 1$ is also an eventown subfamily in $\mathcal A$, then $|\mathcal A_i|\le |\mathcal A'|\le \alpha$ for any $i\ge 1$. So $r\ge \lceil\frac N \alpha\rceil=s+2$.

Consider the odd pair graph $H(\mathcal A)$. Since $\mathcal A_1$ is  a maximal independent set of $H(\mathcal A)$, each vertex in $\mathcal A\backslash\mathcal A_1$ has at least one neighbour in $\mathcal A_1$. So $e(\mathcal A_1, \mathcal A\backslash\mathcal A_1)\ge |\mathcal A\backslash\mathcal A_1|\ge N-\alpha$. Similarly, for any $i\in [s+1]$,  $\mathcal A_i$ is a maximal independent set of $\mathcal A\backslash \cup_{j\le i-1}\mathcal A_j$. Then each vertex in $\mathcal A\backslash \cup_{j\le i}\mathcal A_j$ has at least one neighbour in $\mathcal A_i$. So $e(\mathcal A_i, \mathcal A\backslash \cup_{j\le i}\mathcal A_j)\ge |\mathcal A\backslash \cup_{j\le i}\mathcal A_j|\ge N-i\cdot \alpha$. As a consequence, the number of edges in $H(\mathcal A)$ is
\begin{equation*}
\begin{aligned}
op(\mathcal A)&\ge e(\mathcal A_1, \mathcal A\backslash\mathcal A_1)+ e(\mathcal A_2, \mathcal A\backslash(\mathcal A_1\cup \mathcal A_2))+\cdots +e(\mathcal A_{r-1}, \mathcal A\backslash \cup_{j\le r-1}\mathcal A_j)\\
&\ge |\mathcal A\backslash\mathcal A_1|+|\mathcal A\backslash(\mathcal A_1\cup \mathcal A_2)|+\cdots+ |\mathcal A\backslash \cup_{j\le r-1}\mathcal A_j|\\
&\ge (N-\alpha)+ (N-2\alpha)+\cdots +(N-(s+1)\alpha)\\
&=(N-\frac{(s+2)\alpha}2)(s+1).
\end{aligned}
\end{equation*}
Since  $(s+2)\alpha=\lceil\frac N\alpha\rceil\alpha\le N+\alpha$, we have $op(\mathcal A)\ge (N-\frac{N+\alpha}2)(s+1)=\frac{s+1}2(N-\alpha)$. If $\frac{s+1}2(N-\alpha)\ge s\cdot 2^{\lfloor\frac n 2\rfloor-1}$, then our proof is finished. In fact, by doubling both sides and computing their difference, we have
\begin{equation*}
\begin{aligned}
(s+1)(N-\alpha)-s\cdot2^{\lfloor\frac n 2\rfloor}&=s(2^{\lfloor\frac n 2\rfloor}+s-2^{\lfloor\frac n 2\rfloor}-\alpha)+N-\alpha\\
&=s^2+N-(s+1)\alpha,
\end{aligned}
\end{equation*}
which is positive since $(s+1)\alpha<N$.
%Since for large $n$, $s^2+N-(s+1)\alpha>s^2>0$.
\end{proof}

To complete the proof of Theorem~\ref{thm_for_eventown}, we are left to check the case when $|\mathcal A'|$ is in the range $ [\lceil\frac N{s+2}\rceil+1, 2^{\lfloor n\slash 2\rfloor -1}+s-1]$, for which we have the following lemma.

\begin{lemma}\label{lemma_middlesize}
Let $n$ be a sufficiently large integer and $s\in [2^{\lfloor\frac n 8\rfloor}\slash n]$. Let $\mathcal A$ be an even-sized family on $[n]$ with $|\mathcal A|=N$ and $\mathcal A'$ be its maximum eventown subfamily. If $|\mathcal A'|\in [\lceil\frac N{s+2}\rceil+1, 2^{\lfloor n\slash 2\rfloor -1}+s-1]$ and  $op(\mathcal A)< s\cdot 2^{\lfloor n\slash 2\rfloor -1}$, then for any  $A\in\mathcal A\backslash \mathcal A'$, $op(\mathcal A'\cup\{A\})\ge s+1.$
%the number of members in $\mathcal A'$ having odd-sized intersection with $A$ is at least $s+1$.
\end{lemma}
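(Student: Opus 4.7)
The plan is to prove the statement by contradiction. Suppose there exists $A^\ast\in\mathcal A\setminus\mathcal A'$ with $k:=op(\mathcal A'\cup\{A^\ast\})\le s$. Since $\mathcal A'$ is the maximum eventown subfamily, $A^\ast$ must have at least one odd intersection with $\mathcal A'$, so $k\ge 1$. The goal is to derive $op(\mathcal A)\ge s\cdot 2^{\lfloor n/2\rfloor-1}$, contradicting the hypothesis.

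First, I would set up a swap construction. Let $\mathcal B:=\{B\in\mathcal A':|A^\ast\cap B|\text{ odd}\}$, so $|\mathcal B|=k$. The family $\mathcal A^{(*)}:=(\mathcal A'\setminus\mathcal B)\cup\{A^\ast\}$ is eventown of size $t-k+1$. Extending $\mathcal A^{(*)}$ to a maximum eventown subfamily $\mathcal A^{\ast\ast}$ of $\mathcal A$, the maximality of $\mathcal A'$ forces $|\mathcal A^{\ast\ast}|=t$, and one can check that $\mathcal A^{\ast\ast}=(\mathcal A'\setminus\mathcal B)\cup X$ for some $X\subseteq\mathcal A\setminus\mathcal A'$ with $|X|=k$ and $A^\ast\in X$ (elements of $\mathcal B$ cannot re-enter because each has odd intersection with $A^\ast\in\mathcal A^{\ast\ast}$). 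A key structural observation is that every $A\in X$ satisfies $op(\mathcal A'\cup\{A\})\le k\le s$, since $A$ has even intersection with every element of $\mathcal A'\setminus\mathcal B\subseteq\mathcal A^{\ast\ast}$, so its odd intersections with $\mathcal A'$ are confined to $\mathcal B$. Symmetrically, each $B\in\mathcal B$ has $op(\mathcal A^{\ast\ast}\cup\{B\})\le k$, with odd intersections confined to $X$. Thus a single defect vertex $A^\ast$ forces $k$ defect vertices in $X$ and $k$ dual defect vertices in $\mathcal B$.

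Next, I would combine this rigidity with a global counting argument on $H(\mathcal A)$. Since the independence number of $H(\mathcal A)$ equals $t\le 2^{\lfloor n/2\rfloor-1}+s-1$, Tur\'an's theorem (Theorem~\ref{coro_turan}) applied to the complement graph yields $op(\mathcal A)\ge\binom{N}{2}/t$, which already exceeds $s\cdot 2^{\lfloor n/2\rfloor-1}$ when $t$ is near the lower end $\lceil N/(s+2)\rceil+1$ of the allowed range. For $t$ near the upper end, the span $W=\operatorname{span}(\mathcal A')$ must satisfy $\dim W=\lfloor n/2\rfloor$, and the set $M:=W\setminus\mathcal A'$ has small size $|M|=2^{\lfloor n/2\rfloor}-t\le 2^{\lfloor n/2\rfloor-1}-1$. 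For each $A\in\mathcal A\setminus\mathcal A'$, the identity $op(\mathcal A'\cup\{A\})=t-|\mathcal A'\cap\bm v_A^{\perp}|=t-2^{\lfloor n/2\rfloor-1}+|M\cap\bm v_A^{\perp}|$ allows me to bound the degree below using how $M$ distributes among the hyperplanes $\bm v_A^{\perp}\cap W$; crucially, this also pinpoints where the hyperplane counting becomes tight, namely the configurations that the swap structure above rules out.

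The main obstacle will be handling the intermediate part of the $t$-range, where neither the Tur\'an estimate nor the hyperplane count alone delivers the target bound $s\cdot 2^{\lfloor n/2\rfloor-1}$. My plan for this regime is to iterate the swap construction: the defect vertices in $X$ and $\mathcal B$ generate further maximum eventown subfamilies, each contributing new bipartite structures. Combining these layers with Tur\'an supersaturation (Theorem~\ref{thm_graph_supersaturation}) applied to an auxiliary graph on $\mathcal A\setminus\mathcal A'$, whose edge density is forced above the Tur\'an threshold by the hypothesis $op(\mathcal A)<s\cdot 2^{\lfloor n/2\rfloor-1}$, should produce many copies of $K_{s+2}$-type forbidden substructures. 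Converting these copies into odd pairs within $H(\mathcal A)$ is expected to deliver the final contradiction, closing the middle-range case.
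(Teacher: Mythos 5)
There is a genuine gap: the heart of the lemma is precisely the ``intermediate part of the $t$-range'' that your plan defers to a hoped-for iteration, and none of the concrete steps you do give covers it. Your Tur\'an estimate $op(\mathcal A)\ge\binom N2/t-N/2$ from $\alpha(H(\mathcal A))=t$ exceeds $s\cdot 2^{\lfloor n/2\rfloor-1}$ only when $t=O(N/s)$, a thin sliver just above $\lceil N/(s+2)\rceil$; your hyperplane identity $op(\mathcal A'\cup\{A\})=t-2^{\lfloor n/2\rfloor-1}+|M\cap\bm v_A^{\perp}|$ needs $\dim W_{\mathcal A'}=\lfloor n/2\rfloor$, which is only forced when $t>2^{\lfloor n/2\rfloor-1}$ (the very top of the range), and even there it yields only $op(\mathcal A'\cup\{A\})\ge t-2^{\lfloor n/2\rfloor-1}$, not $s+1$. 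In between (say $t\approx 2^{\lfloor n/2\rfloor-2}$) neither tool says anything. The closing paragraph is not an argument: the hypothesis $op(\mathcal A)<s\cdot 2^{\lfloor n/2\rfloor-1}$ makes $H(\mathcal A)$ \emph{sparse}, so it is its complement, not any ``auxiliary graph on $\mathcal A\setminus\mathcal A'$,'' whose density is high, and copies of complete graphs in the complement are eventown subfamilies, which do not convert into odd pairs; no mechanism for the claimed contradiction is exhibited. There is also a local error in the swap construction: maximality of $\mathcal A'$ gives $|\mathcal A^{\ast\ast}|\le t$, not $|\mathcal A^{\ast\ast}|=t$, so the claimed rigidity ($|X|=k$ defect vertices) is unjustified.

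For comparison, the paper's proof runs the contradiction through a dimension-counting iteration rather than through $op(\mathcal A)$ directly. Assuming some $x\in X=\mathcal A\setminus\mathcal A'$ has at most $s$ neighbours $N(x)\subset\mathcal A'$, it bounds $Y=\{y\in X: N(y)\subseteq N(x)\}$ by $|Y|<s(2^{n/4}+1)$ (your swap idea appears here, but only to show $Y$ has no independent set of size $s+1$, so $\overline{H_0}[Y]$ is $K_{s+1}$-free and Tur\'an plus the sparsity of $H(\mathcal A)$ bounds $|Y|$), then shows $X\setminus Y$ again contains a vertex of degree at most $s$ into $\mathcal A'\setminus N(x)$, and iterates. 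The key invariant is that each round deletes at most $s$ vertices from the $\mathcal A'$-side and $O(is2^{n/4})$ from the $X$-side while $\dim\operatorname{span}(\mathcal A_i)$ drops by exactly one; after $\lfloor n/2\rfloor$ rounds the span has dimension at most $1$ yet the surviving family still has size $\Theta(n2^{3n/8})$, which is impossible. This dimension-drop mechanism is what handles the entire middle range, and it is absent from your proposal.
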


Lemma~\ref{lemma_middlesize} trivially leads to a contradiction by %$|\mathcal A|=2^{\lfloor\frac n 2\rfloor} +s$ and hence
\begin{equation*}
op(\mathcal A)> |\mathcal A\backslash\mathcal A'|\cdot s\ge (2^{\lfloor\frac n 2\rfloor} +s-(2^{\lfloor n\slash 2\rfloor -1}+s-1))\cdot s> s\cdot 2^{\lfloor n\slash 2\rfloor -1},
\end{equation*}and thus completes the proof of Theorem~\ref{thm_for_eventown}.
However the proof of Lemma~\ref{lemma_middlesize} is more involved and far from trivial, and we defer it to the next subsection.

%From Lemma~\ref{lemma_size_o_max_subeventown}, to prove Theorem~\ref{thm_for_eventown}, we only need to check the case when $|\mathcal A'|$ is in the range $[\lceil\frac N{s+2}\rceil+1, 2^{\lfloor n\slash 2\rfloor -1}+s-1]$. In our following analysis, we follow some rules. Firstly, if not deliberately mentioned, the value of $|\mathcal A'|$ in our statement is always in $[\lceil\frac N{s+2}\rceil+1, 2^{\lfloor n\slash 2\rfloor -1}+s-1]$, and $s$ always satisfies $s\le 2^{\lfloor\frac n 8\rfloor-1}\slash n$. Secondly, an element $A$ in $\mathcal A$, its corresponding vertex in $G(\mathcal A)$, and its characteristic vector in $\mathbb F_2^n$ are seen as the same object, and hence share the same notation. One can differ them by checking whether $A$ is in a family, in a graph, or in a linear space.

%What is more, the bases of the logarithms are always 2 by default. Finally, we will use some standard notations, like $o$, $O$ and $\Theta$ to compare the values of two functions, in which case, the parameter going to infinity is always set as the size of the character set, $n$, by default.

%For a given even-sized family $\mathcal A\subset 2^{[n]}$ with $|\mathcal A|=N$, we have the following claim.
%\begin{claim}\label{claim_key_eventown}
%If $op(\mathcal A)< s\cdot 2^{\lfloor n\slash 2\rfloor -1}$ and $n$ is large enough, then for any vertex $x\in\mathcal A\backslash \mathcal A'$, the degree of $x$ in $G(\mathcal A)$ is at least $s+1$.
%\end{claim}

\subsection{Proof of Lemma~\ref{lemma_middlesize}}

In this subsection, we always assume that the conditions in Lemma~\ref{lemma_middlesize} are all satisfied. %Further for convenience, we do not distinguish a member in $\mathcal A$, its corresponding vertex in $G(\mathcal A)$, and its characteristic vector in $\mathbb F_2^n$, and hence they share the same notation. %One can differ them by checking whether $A$ is in a family, in a graph, or in a linear space.
Remember that we use $H_0$ to denote the odd pair graph of $2^{[n]}$. Consider the induced bipartite graph $H_0[\mathcal A', X]$ with $X\triangleq \mathcal A\backslash\mathcal A'$. It is easy to check that $H_0[\mathcal A', X]$ is a subgraph of $H(\mathcal A)$.
%Let $H$ be the induced bipartite graph of $H(\mathcal A)$ on parts $\mathcal A'$ and $X\triangleq \mathcal A\backslash\mathcal A'$.
Note that any vertex $x$ in  $X$ has at least one neighbor in $\mathcal A'$, otherwise  $\mathcal A'\cup\{x\}$ is a larger eventown subfamily. Under these notations,  Lemma~\ref{lemma_middlesize} is equivalent to saying that $d_{H_0[\mathcal A', X]}(x)\ge s+1$ for any $x\in X$. Suppose on the contrary that there exists a vertex $x\in X$ with degree at most $s$ in $H_0[\mathcal A', X]$. Denote its neighborhood in $H_0[\mathcal A', X]$ as $N(x)\subset \mathcal A'$. Further denote $Y$ as the set of vertices in $X$ whose neighborhood in $H_0[\mathcal A', X]$ is contained in $N(x)$, i.e., $Y=\{y\in X: N(y)\subseteq N(x)\}$. As $x\in Y$, $Y$ is not empty. We claim that the size of $Y$ is very small.
\begin{claim}\label{claim_Y_is_small}
The size $|Y|<s\cdot (2^{n\slash 4}+1)$.
\end{claim}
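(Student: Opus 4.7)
The plan is to bound $|Y|$ by a Tur\'an-type supersaturation argument applied to the odd-intersection graph $H_0[Y]$. The central observation, forced by the maximality of $\mathcal A'$, is that $Y$ cannot contain $s+1$ vectors that are pairwise orthogonal in $\mathbb F_2^n$. Indeed, if $y_1,\dots,y_{s+1}\in Y$ were pairwise orthogonal, then combined with the defining condition $N(y_i)\subseteq N(x)$, each $y_i$ is also orthogonal to every member of $\mathcal A'\setminus N(x)$; together with $(\bm v_{y_i},\bm v_{y_i})=0$ (each $y_i$ is an even-sized set), the collection $(\mathcal A'\setminus N(x))\cup\{y_1,\dots,y_{s+1}\}$ would form an eventown subfamily of $\mathcal A$ of size at least $(t-|N(x)|)+(s+1)\ge t+1$, contradicting $|\mathcal A'|=t$.

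Next I would feed this into Tur\'an's theorem. The ``orthogonal-pair'' graph on $Y$ --- whose edges are exactly the pairs $\{y_1,y_2\}$ with $(\bm v_{y_1},\bm v_{y_2})=0$ --- has clique number at most $s$, so Theorem~\ref{coro_turan} bounds its edge count by $(1-1/s)|Y|^2/2$. Hence the complementary graph on $Y$, which is precisely the odd-intersection graph and thus a subgraph of $H(\mathcal A)$, carries at least $\binom{|Y|}{2}-\tfrac{s-1}{s}\cdot\tfrac{|Y|^2}{2}\ge \tfrac{|Y|(|Y|-s)}{2s}$ edges, and each of these edges contributes to $op(\mathcal A)$.

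Comparing this edge count with the standing hypothesis $op(\mathcal A)<s\cdot 2^{\lfloor n/2\rfloor-1}$ then yields $|Y|(|Y|-s)<s^2\cdot 2^{\lfloor n/2\rfloor}\le s^2\cdot 2^{n/2}$. If $|Y|\ge s(2^{n/4}+1)$, then $|Y|(|Y|-s)\ge s^2\cdot 2^{n/4}(2^{n/4}+1)>s^2\cdot 2^{n/2}$, which is the desired contradiction; hence $|Y|<s(2^{n/4}+1)$.

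The only step requiring any real insight is the first paragraph: one must notice that the maximality of $\mathcal A'$ converts any pairwise-orthogonal subset of $Y$ directly into an enlarged eventown, with the ``budget'' of $s+1$ new members matching precisely the at most $s$ vertices of $N(x)$ that are being removed. Once this reduction is in hand, the Tur\'an bound and the final numerical comparison are routine, and I do not expect any further obstacle.
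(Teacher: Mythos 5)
Your proof is correct and is essentially the paper's own argument: the maximality of $\mathcal A'$ rules out $s+1$ pairwise-orthogonal (i.e.\ pairwise even-intersecting) members of $Y$ by swapping $N(x)$ for them, Tur\'an's theorem applied to the complement then forces at least $\frac{|Y|(|Y|-s)}{2s}$ odd pairs inside $Y$, and comparing with $op(\mathcal A)<s\cdot 2^{\lfloor n/2\rfloor-1}$ gives the bound. Your final numerical step just makes explicit the computation the paper leaves implicit.
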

\begin{proof} Denote $\ell=|Y|$. We first claim that $Y$ does not contain an independent set of ${H_0[X]}$ of size $s+1$. Otherwise, say $I\subset Y$ is an independent set with $|I|=s+1$. Then ${\mathcal B}=\big(\mathcal A'\backslash N(x)\big)\cup I$ is also an independent set, i.e., an eventown subfamily of ${\mathcal A}$, since $N(I)\subseteq N(Y)\subseteq N(x)$. Since $|N(x)|\le s$, ${\mathcal B}$ is of size $|\mathcal A'|-|N(x)|+|I|\ge |\mathcal A'|+1$, which contradicts the maximality of $|\mathcal A'|$.

%By the contradictory hypothesis, For, we claim that $Y$ should not contain an independent set $I$ of $G=G(\mathcal A)$ with size $s+1$. Otherwise, consider . On one hand, $|{\mathcal B}|=|\mathcal A'|-|N(x)|+I\ge |\mathcal A'|+1$; on the other hand, $\mathcal B$ is also a subeventown of $\mathcal A$ for $G[I]$ has no edge and the neighborhood of $I$ in $H$ is contained in $N(x)$. So this contradicts to the maximality of the size of $\mathcal A'$.

Consider the induced subgraph $H(Y)=H(\mathcal A)[Y]=H_0[Y]$, which does not contain an independent set of size $s+1$ by the above analysis. Equivalently, the complement  of $H(Y)$, that is $\overline {H}(Y)$, does not contain any copy of $K_{s+1}$. By Theorem~\ref{coro_turan},
\begin{equation*}
e(\overline {H}(Y))\le \frac{\ell^2}2\cdot\frac{s-1}s.
\end{equation*}
So $e(H(Y))\ge \binom{\ell}2-{\ell^2(s-1)}\slash 2s={\ell^2}\slash{2s}- {\ell}\slash2$. Since $e(H(Y))\le e(H(\mathcal A))=op(\mathcal A)< s\cdot 2^{\lfloor\frac n 2\rfloor-1}$, then ${\ell^2}\slash{2s}- {\ell}\slash2< s\cdot 2^{\lfloor\frac n 2\rfloor-1}$, which leads to our desired result.
\end{proof}
\begin{remark}
Note that in our analysis $|\mathcal A'|<2^{\lfloor\frac n 2\rfloor-1}+s$, so $|X|=N-|\mathcal A'|>2^{\lfloor\frac n 2\rfloor-1}$, and hence $|Y|=o(|X|)$. This means $|X\setminus Y|$ has the same order as $|X|$.
\end{remark}

Since $N(x)\neq \emptyset \neq Y$, we consider the induced subgraph $H_0[\mathcal A'\backslash N(x), X\backslash Y]$. Note that any vertex in $X\backslash Y$ has at least one neighbor in $\mathcal A'\backslash N(x)$ by the definition of $Y$. The following claim shows that after deleting $N(x)$ and $Y$, the  part $X\backslash Y$ still cannot reach the minimum degree $s+1$ in $H_0[\mathcal A'\backslash N(x), X\backslash Y]$.
\begin{claim}\label{claim_smallest_degree_is_small_after_del}
There exists a vertex $v\in X\backslash Y$ with degree at most $ s$ on $H_0[\mathcal A'\backslash N(x), X\backslash Y]$.
\end{claim}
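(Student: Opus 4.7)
The plan is to argue by contradiction. Suppose every vertex $v\in X\setminus Y$ has degree at least $s+1$ in the bipartite subgraph $H_0[\mathcal A'\setminus N(x),\, X\setminus Y]$, and I will derive a contradiction with the standing hypothesis $op(\mathcal A)<s\cdot 2^{\lfloor n/2\rfloor-1}$ from Lemma~\ref{lemma_middlesize}. Double-counting edges of this bipartite subgraph from the $X\setminus Y$ side gives
\[
e\bigl(H_0[\mathcal A'\setminus N(x),\, X\setminus Y]\bigr)\;\ge\;(s+1)\,|X\setminus Y|.
\]
Every such edge is a pair of members of $\mathcal A$ with odd-sized intersection, hence it is also an edge of $H(\mathcal A)$, so $op(\mathcal A)\ge (s+1)|X\setminus Y|$.

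Next I would lower-bound $|X\setminus Y|$ by combining the hypothesis $|\mathcal A'|\le 2^{\lfloor n/2\rfloor-1}+s-1$ with Claim~\ref{claim_Y_is_small}, which yields
\[
|X\setminus Y|\;=\;(N-|\mathcal A'|)-|Y|\;>\;\bigl(2^{\lfloor n/2\rfloor-1}+1\bigr)-s(2^{n/4}+1).
\]
Plugging this into the previous inequality, the desired contradiction $op(\mathcal A)\ge s\cdot 2^{\lfloor n/2\rfloor-1}$ reduces, after cancelling the common term $s\cdot 2^{\lfloor n/2\rfloor-1}$ from both sides, to the purely numerical inequality
\[
2^{\lfloor n/2\rfloor-1}\;\ge\;(s+1)\bigl(s(2^{n/4}+1)-1\bigr),
\]
whose right-hand side is of order $s^{2}\cdot 2^{n/4}$. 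Under the hypothesis $s\le 2^{\lfloor n/8\rfloor}/n$ of Lemma~\ref{lemma_middlesize}, we have $s^{2}\cdot 2^{n/4}\le 2^{n/2}/n^{2}$, which is safely below $2^{\lfloor n/2\rfloor-1}$ for all sufficiently large $n$. This delivers the contradiction and proves the claim.

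The main conceptual work has already been carried out in Claim~\ref{claim_Y_is_small}: that bound is precisely what prevents $Y$ from eroding too much of $X$ and guarantees $|X\setminus Y|$ stays of the same order as $|X|\approx 2^{\lfloor n/2\rfloor-1}$. Given this, the only remaining task is the routine arithmetic verification above, and the restriction $s\le 2^{\lfloor n/8\rfloor}/n$ in the statement of Lemma~\ref{lemma_middlesize} is tailored exactly to make that verification go through with room to spare; this interplay between Claim~\ref{claim_Y_is_small} and the size restriction on $s$ is the only delicate ingredient.
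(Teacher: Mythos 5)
Your proposal is correct and follows essentially the same route as the paper: assume all degrees are at least $s+1$, count the bipartite edges to get $op(\mathcal A)\ge (s+1)|X\setminus Y|$, lower-bound $|X\setminus Y|$ via $|\mathcal A'|<2^{\lfloor n/2\rfloor-1}+s$ and Claim~\ref{claim_Y_is_small}, and use $s\le 2^{\lfloor n/8\rfloor}/n$ to contradict $op(\mathcal A)<s\cdot 2^{\lfloor n/2\rfloor-1}$. The only cosmetic difference is that the paper phrases the contradiction for the larger graph $H_0[\mathcal A',X]$ (a slightly stronger statement), while you work directly in $H_0[\mathcal A'\setminus N(x),X\setminus Y]$, which is all the claim requires.
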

\begin{proof}
We prove a stronger claim that there exists a vertex $v\in X\backslash Y$ with degree at most $ s$ on $H_0[\mathcal A', X]$. Suppose on the contrary, every vertex in $X\backslash Y$ has degree at least $s+1$. Then $op(\mathcal A)\ge e(\mathcal A', X\backslash Y)\ge (|X|-|Y|)(s+1)$. Since $|X|>2^{\lfloor\frac n 2\rfloor-1}$ and $|Y|<s\cdot (2^{n\slash 4}+1)$, then
$$
\begin{aligned}
op(\mathcal A)-s\cdot 2^{\lfloor\frac n 2\rfloor-1}&\ge (2^{\lfloor\frac n 2\rfloor-1}+1-s(2^{\frac n 4}+1))(s+1)- s\cdot 2^{\lfloor\frac n 2\rfloor-1}\\
&{=2^{\lfloor\frac n 2\rfloor-1}-s(s+1)(2^{\frac n 4}+1)+s+1.}
\end{aligned}
$$
Since $s\le 2^{\lfloor\frac n 8\rfloor}\slash n$, ${s(s+1)(2^{\frac n 4}+1)}=O(2^{\frac n 2}\slash n^2)=o(2^{\lfloor\frac n 2\rfloor-1})$. So $op(\mathcal A)>s\cdot 2^{\lfloor\frac n 2\rfloor-1}$, which contradicts the assumption in Lemma~\ref{lemma_middlesize}.
%$$
%\begin{aligned}
%op(\mathcal A)&\ge e(\mathcal A', Y)+ |G[Y]| + e(\mathcal A', X\backslash Y)\\
%&\ge |Y|+ |Y|^2\slash 2s- |Y|\slash 2 +(|X|-|Y|)(s+1).
%\end{aligned}
%$$
\end{proof}

By Claim~\ref{claim_smallest_degree_is_small_after_del}, we can see the graph $H_0[\mathcal A'\backslash N(x), X\backslash Y]$ has the same property as $H_0[\mathcal A', X]$, i.e., the part $X\setminus Y$ has no isolated vertex but has a small degree ($\leq s$) vertex. Motivated by this, we can set $\mathcal A'\backslash N(x)$ and $X\backslash Y$ as new $\mathcal A'$ and $X$, and do the same analysis as Claims~\ref{claim_Y_is_small} and~\ref{claim_smallest_degree_is_small_after_del} iteratively. The detail of the induction is as follows, where we denote $W_{\mathcal B}$ the subspace $span(\bm v_B:B\in \mathcal B)\subset \mathbb F_2^n$ for any $\mathcal B\subset \mathcal A$.

For a general index  $i\geq 1$, we define the following conditions and notations for a pair $(\mathcal A_i, X_i)$:
\begin{itemize}
  \item[ (C1)]$\mathcal A_i\subset \mathcal A'$ and $|\mathcal A_i|\geq |\mathcal A'|-(i-1)s$;
      \item[ (C2)]$\dim(W_{\mathcal A_i})= \dim(W_{\mathcal A'})+1-i$;
  \item[ (C3)]$X_i\subset X$ and  $|X_i|\ge |X|- \binom{i}{2} s(2^{\frac n 4}+1)$; and
  \item[ (C4)]in $H_i:=H_0[\mathcal A_i,X_i]$, there is a vertex $x_i\in X_i$ such that $1\leq d_{H_i}(x_i)\leq s$.
\end{itemize} Then based on the chosen $x_i$, define  $N_i:=N_{H_i}(x_i) \subset \mathcal A_i$, which satisfies $1\leq |N_i|\leq s$ by (C4), and define $Y_i:=\{y\in X_i: N_{H_i}(y)\subset N_i\}\subset X_i$, which is nonempty since $x_i\in Y_i$. For the index $i+1$, set
 $\mathcal A_{i+1}:=\mathcal A_i\setminus N_i$, $X_{i+1}:=X_i\setminus Y_i$ and $H_{i+1}:=H_0[\mathcal A_{i+1},X_{i+1}]$.

Notice that for large $i$, the existence of a pair $(\mathcal A_i, X_i)$ satisfying (C1)-(C4) itself can lead to a contradiction. To see this, we mention that the lower bound of $|\mathcal A_i|$ and the upper bound of $\dim(W_{\mathcal A_i})$ are given in (C1) and (C2), respectively, but $\log |\mathcal A_i|$ can never exceed $\dim(W_{\mathcal A_i})$. As a consequence, our strategy is to prove the existence of a pair of $(\mathcal A_i, X_i)$ iteratively from $i=1$ to some $i$ large enough to trigger such a contradiction.

 As the base case, we set
$\mathcal A_1:=\mathcal A'$, $X_1:=X$ and $H_1:=H_0[\mathcal A_1,X_1]$. There exists a vertex $x_1:=x\in X_1$ with degree less than $s+1$ in $H_1$ by the original assumption,  and $ d_{H_1}(x_1)\geq 1$ by the maximality of $\mathcal A'$.  So conditions (C1)-(C4) trivially hold for $i=1$.

 Now we check the conditions for $i=2$. Define  $N_1:=N(x)=N_{H_1}(x_1)$, which is not empty and has size at most $s$, and define $Y_1:=Y=\{y\in X_1: N_{H_1}(y)\subset N_1\}$, which has size at most $s\cdot (2^{n\slash 4}+1)$ by Claim~\ref{claim_Y_is_small}. Then (C1) and (C3) hold for $i=2$. Since $ d_{H_1}(x_1)\geq 1$, that is, $\mathcal A' \not\subset x_1^\perp$, we have $\dim (W_{\mathcal A'} \cap x_1^\perp)=\dim(W_{\mathcal A'})-1$. Note that $\mathcal A_2=\mathcal A'\cap x_1^\perp$, so $\dim(W_{\mathcal A_2})=\dim(W_{\mathcal A'})-1$ and (C2) holds for $i=2$. Finally, (C4) holds for $i=2$ by Claim~\ref{claim_smallest_degree_is_small_after_del}, and the nonzero degree is from the definition of $Y_1$.

 %For $i=2$, (C1) and (C3)-(C4) hold  by Claims~\ref{claim_Y_is_small} and~\ref{claim_smallest_degree_is_small_after_del}, and  (C2) holds since
 In general, we assume that (C1)-(C4) hold for $i$. We will show that as long as both $\mathcal A_{i+1}$ and $X_{i+1}$ are nonempty and $i\leq n\slash 2$, all conditions (C1)-(C4) still hold for $i+1$.

 First, since $|N_i|=d_{H_i}(x_i)\leq s$ by (C4) and $|\mathcal A_i|\geq |\mathcal A'|-(i-1)s$ by (C1), we have  $|\mathcal A_{i+1}|= |\mathcal A_{i}|-|N_i|\geq |\mathcal A'|-(i-1)s -s=|\mathcal A'|-is$, i.e., (C1) holds for $i+1$. Second, since $d_{H_i}(x_i)\geq 1$ by (C4), that is $\mathcal A_i\not\subset x_i^\perp$, then $\dim(W_{\mathcal A_{i+1}})=\dim (W_{\mathcal A_i} \cap x_i^\perp)=\dim(W_{\mathcal A_i})-1= \dim (W_{\mathcal A'})-i$ by (C2). So we have proved (C2) for $i+1$.
 Third, to prove (C3) for $i+1$, we need to show the following result as in Claim~\ref{claim_Y_is_small}.

\begin{claim}\label{claim_Y_i_is_small}
The size $|Y_i|<i s\cdot (2^{n\slash 4}+1)$.

\end{claim}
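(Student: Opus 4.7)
My plan is to run the same argument as in the proof of Claim~\ref{claim_Y_is_small}, with the independence threshold enlarged from $s+1$ to $is+1$ so as to absorb the cumulative loss $|\mathcal{A}'\setminus\mathcal{A}_{i+1}|\leq is$ guaranteed by condition (C1). Write $\ell:=|Y_i|$ throughout.

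First I would show that $H_0[Y_i]$ contains no independent set of size $is+1$. Suppose for contradiction that $I\subset Y_i$ is such a set, and consider the subfamily $\mathcal{B}:=\mathcal{A}_{i+1}\cup I$. Its eventown property is verified in three steps: pairs inside $\mathcal{A}_{i+1}\subset\mathcal{A}'$ have even intersection since $\mathcal{A}'$ is eventown; pairs inside $I$ have even intersection since $I$ is $H_0$-independent; and for a cross pair $A\in\mathcal{A}_{i+1}$, $B\in I\subset Y_i$, the definition $N_{H_i}(B)\subset N_i$ forces $B$ to have no $H_0$-neighbor in $\mathcal{A}_i\setminus N_i=\mathcal{A}_{i+1}$, so that $|A\cap B|$ is even. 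By (C1), $|\mathcal{B}|\geq (|\mathcal{A}'|-is)+(is+1)=|\mathcal{A}'|+1$, contradicting the maximality of $\mathcal{A}'$.

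Having ruled out large independent sets, the complement $\overline{H_0[Y_i]}$ is $K_{is+1}$-free, so by Theorem~\ref{coro_turan} we get $e(\overline{H_0[Y_i]})\leq \frac{is-1}{is}\cdot\frac{\ell^2}{2}$, and therefore $e(H_0[Y_i])\geq \frac{\ell^2}{2is}-\frac{\ell}{2}$. Since $H_0[Y_i]$ is a subgraph of $H(\mathcal{A})$, this quantity is at most $op(\mathcal{A})<s\cdot 2^{\lfloor n/2\rfloor-1}$, which reduces to the quadratic inequality $\ell^2-is\,\ell<is^2\cdot 2^{\lfloor n/2\rfloor}$; solving it yields the required bound $\ell<is(2^{n/4}+1)$ for $n$ sufficiently large.

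The only step that demands genuine care is the eventown verification of $\mathcal{B}$ in the cross-pair case: this is precisely where the bookkeeping $\mathcal{A}_{i+1}=\mathcal{A}_i\setminus N_i$ and the definition of $Y_i$ relative to the \emph{reduced} graph $H_i=H_0[\mathcal{A}_i,X_i]$, as opposed to the original $H_0[\mathcal{A}',X]$, become essential. Once this is in place, the Tur\'an step is a direct mirror of the base case.
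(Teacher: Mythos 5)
Your proposal is correct and follows essentially the same route as the paper: the same enlarged independent-set threshold $is+1$, the same augmented family $\mathcal{B}=(\mathcal{A}_i\setminus N_i)\cup I=\mathcal{A}_{i+1}\cup I$ contradicting the maximality of $\mathcal{A}'$ via (C1), and the same Tur\'an bound on $\overline{H_0[Y_i]}$ leading to the quadratic inequality $\ell^2/(2is)-\ell/2<s\cdot 2^{\lfloor n/2\rfloor-1}$ and hence $\ell<is(2^{n/4}+1)$ (which in fact needs no largeness assumption on $n$). Your explicit three-case eventown check for $\mathcal{B}$, with the cross pairs handled by $N_{H_i}(B)\subset N_i$, is just a more detailed rendering of the paper's containment $N_{H_1}(I_i)\subset N_i\cup(\mathcal{A}\setminus\mathcal{A}_i)$.
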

\begin{proof}The proof is similar to that of Claim~\ref{claim_Y_is_small}.

Denote $\ell_i=|Y_i|$. We first claim that $H_0[Y_i]$ does not have an independent set of size $is+1$. Otherwise, say $I_i\subset Y_i$ is an independent set with $|I_i|=is+1$. Then ${\mathcal B_i}=\big(\mathcal A_i\backslash N_i\big)\cup I_i$ is also an independent set in $H(\mathcal A)$, i.e., an eventown subfamily of ${\mathcal A}$, since $N_{H_1}(I_i)\subset N_{H_1}(Y_i) \subset N_i \cup ({\mathcal A}\setminus \mathcal A_i)$. Since $|N_i|\le s$, ${\mathcal B_i}$ is of size $|\mathcal A_i|-|N_i|+|I_i|\ge |\mathcal A'|-(i-1)s-s+is+1\geq |\mathcal A'|+1$ by (C1), which contradicts the maximality of $|\mathcal A'|$.

Then the induced subgraph $H_0[Y_i]$ does not contain an independent set of size $is+1$ by the above analysis. That is, the complement  $\overline {H_0}[Y_i]$ does not contain any copy of $K_{is+1}$. By Corollary~\ref{coro_turan},
\begin{equation*}
e(\overline {H_0}[Y_i])\le \frac{\ell_i^2}2\cdot\frac{is-1}{is}.
\end{equation*}
So $e(H_0[Y_i])\ge \binom{\ell_i}2-{\ell_i^2(is-1)}\slash 2is={\ell_i^2}\slash{2is}- {\ell_i}\slash2$. Since $e(H_0[Y_i])\le e(H(\mathcal A))=op(\mathcal A)< s\cdot 2^{\lfloor\frac n 2\rfloor-1}$, then ${\ell_i^2}\slash{2is}- {\ell_i}\slash2< s\cdot 2^{\lfloor\frac n 2\rfloor-1}$, which leads to $\ell_i<is(2^{\frac n 4}+1)$.% {\color{green}\le\sqrt{i}s(2^{\frac n 4}+1)}{\color{green}(I suggest to delete the green part in the inequality. By the fomula of the solution of quadratic equation, the green part is not large enough.)}
%----------
%
%There should not be any independent set $I$ of size $(i+1)s+1$ in $G[Y(i+1)]$, and hence $|Y(i+1)|<(i+1)s(2^{\frac n 4}+1)$ for any $i\ge 1$.
%
%Otherwise, there exists an independent set $I$ of size $(i+1)s+1$ in $G[Y(i+1)]$. Then we consider $\mathcal B=(\mathcal A(i)\backslash N(x_{i+1}))\cup I$. On the one hand, $|\mathcal B|=|\mathcal A'|-|N(x_{i+1})|+|I|\ge |\mathcal A'|-is -s +(i+1)s+1>|\mathcal A'|$. On other hand, since $I\subset Y(i+1)$ and $I$ is an independent set, there is no edge in $G[I]$ and in the induced bipartite subgraph on vertex sets $\mathcal A(i)\backslash N(x_{i+1})$ and $I$. So $B$ is a subeventown of $\mathcal A$ with size larger than $|\mathcal A'|$, contradicting to the maximality.
%
%So the complement of $G[Y(i+1)]$ should not contain a copy of $K_{(i+1)s+1}$ as a subgraph. Once again by Tur\'{a}n theorem, $|\overline{G[Y(i+1)]}|\le \frac{|Y(i+1)|^2}2\frac{(i+1)s-1}{(i+1)s}$ and $|{G[Y(i+1)]}|\ge \frac{|Y(i+1)|^2}{2s(i+1)}-\frac{|Y(i+1)|}2$. So $\frac{|Y(i+1)|^2}{2s(i+1)}-\frac{|Y(i+1)|}2\le |{G[Y(i+1)]}|\le op(\mathcal A)< s\cdot 2^{\lfloor\frac n 2\rfloor-1}$, which leads to $|Y|\le\sqrt{(i+1)}s(2^{\frac n 4}+1)<(i+1)s(2^{\frac n 4}+1)$.
\end{proof}

By Claim~\ref{claim_Y_i_is_small}, $|X_{i+1}|=|X_i|-|Y_i|\geq |X|-\binom{i}{2} s(2^{\frac n 4}+1)-i s\cdot (2^{n\slash 4}+1)=|X|-\binom{i+1}{2} s(2^{\frac n 4}+1)$. So (C3) holds for $i+1$. It is left to prove (C4) for $i+1$. The nonzero degree of each vertex in $X_{i+1}$ in $H_{i+1}$ is from the definition of $Y_i$, so we only need to prove a result similar to  Claim~\ref{claim_smallest_degree_is_small_after_del}.

\begin{claim}\label{claim_small_degree_for_all_s}
If $i\leq n\slash 2$, then there exists a vertex $v\in X_{i+1}$ with degree at most $ s$ on $H_{i+1}$.
\end{claim}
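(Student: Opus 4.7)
The plan is to prove Claim~\ref{claim_small_degree_for_all_s} by contradiction, exactly in parallel with the base-case argument of Claim~\ref{claim_smallest_degree_is_small_after_del}. Suppose toward a contradiction that every vertex $v \in X_{i+1}$ satisfies $d_{H_{i+1}}(v) \ge s+1$; since $H_{i+1} = H_0[\mathcal A_{i+1}, X_{i+1}]$ is an induced subgraph of $H(\mathcal A)$, this immediately yields
\[
op(\mathcal A) \ge e(H_{i+1}) \ge (s+1)\,|X_{i+1}|.
\]
The aim is to derive $(s+1)|X_{i+1}| > s \cdot 2^{\lfloor n/2 \rfloor -1}$, contradicting the standing hypothesis $op(\mathcal A) < s \cdot 2^{\lfloor n/2 \rfloor -1}$ of Lemma~\ref{lemma_middlesize}.

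To bound $|X_{i+1}|$ from below, recall that the hypothesis $|\mathcal A'| \le 2^{\lfloor n/2 \rfloor -1}+s-1$ of Lemma~\ref{lemma_middlesize} forces $|X| = N - |\mathcal A'| \ge 2^{\lfloor n/2 \rfloor -1}+1$; combined with condition (C3) this gives
\[
|X_{i+1}| \ge 2^{\lfloor n/2 \rfloor -1} + 1 - \binom{i+1}{2}\,s\,(2^{n/4}+1).
\]
After substitution the desired contradiction reduces to the purely scalar inequality
\[
2^{\lfloor n/2 \rfloor -1} + (s+1) > (s+1)\binom{i+1}{2}\,s\,(2^{n/4}+1),
\]
which must be verified uniformly over $i \le n/2$ and $s \le 2^{\lfloor n/8 \rfloor}/n$.

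For the estimate, apply the bounds $\binom{i+1}{2} \le \binom{\lfloor n/2 \rfloor +1}{2} = n^2/8 + O(n)$; $s^2 \le 2^{2\lfloor n/8 \rfloor}/n^2 \le 2^{n/4}/n^2$ (from $s \le 2^{\lfloor n/8 \rfloor}/n$ together with $2\lfloor n/8\rfloor \le n/4$); the crude $s(s+1) \le 2s^2$ for $s \ge 1$; and $2^{n/4}+1 \le 2^{n/4}(1+o(1))$. Multiplying these together gives
\[
 (s+1)\binom{i+1}{2}\,s\,(2^{n/4}+1) \le 2 \cdot \frac{n^2}{8} \cdot \frac{2^{n/4}}{n^2} \cdot 2^{n/4} \cdot (1+o(1)) = 2^{n/2-2}(1+o(1)),
\]
which is strictly less than $2^{\lfloor n/2 \rfloor -1}$ for all sufficiently large $n$ (by a factor of at least $2$ for even $n$ and at least $\sqrt{2}$ for odd $n$). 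Hence the scalar inequality holds, yielding the desired contradiction.

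\textbf{Main obstacle.} The delicate point is that both sides of the reduced inequality a priori look of matching order $2^{n/2}$, making the estimate appear very tight. The key cancellation is driven by the stringent constraint $s \le 2^{\lfloor n/8 \rfloor}/n$, which forces $s^2 n^2 \le 2^{n/4}$; this factor exactly offsets the extra $2^{n/4}$ contributed by the $(2^{n/4}+1)$ term, leaving only a constant coefficient (at most $1/4$, combining $\binom{\lfloor n/2 \rfloor +1}{2} \approx n^2/8$ with the slack $s(s+1) \le 2s^2$) in front of $2^{n/2}$. This is strictly below the $1/2$ coefficient hidden inside $2^{\lfloor n/2 \rfloor -1}$ for even $n$, and the $+(s+1)$ cushion on the left covers the odd-$n$ case. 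The remainder of the argument is a direct iteration of the Claim~\ref{claim_smallest_degree_is_small_after_del} template.
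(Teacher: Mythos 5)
Your proof is correct and follows essentially the same route as the paper's: assume all vertices of $X_{i+1}$ have degree at least $s+1$, bound $op(\mathcal A)\ge (s+1)|X_{i+1}|$, use (C3) together with $|X|\ge 2^{\lfloor n/2\rfloor-1}+1$, and then exploit $i\le n/2$ and $s\le 2^{\lfloor n/8\rfloor}/n$ to see that the error term $\binom{i+1}{2}s(s+1)(2^{n/4}+1)$ is only a constant fraction of $2^{\lfloor n/2\rfloor-1}$ for large $n$. The only differences are cosmetic (you rearrange the inequality before estimating and get the coefficient $1/4$ where the paper gets $1/8$), so nothing further is needed.
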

\begin{proof} As in the proof of Claim~\ref{claim_smallest_degree_is_small_after_del}, we assume that each vertex $v\in X_{i+1}$ has degree at least $s+1$ in the bigger graph $H_i$. Then $op(\mathcal A)\ge e(\mathcal A_i, X_{i+1})\ge |X_{i+1}|(s+1)$. Since $|X_{i+1}|\ge |X|-\binom{i+1}{2}s(2^{\frac n 4}+1)$ by (C3) for $i+1$,
$$
\begin{aligned}
& op(\mathcal A)-s\cdot2^{\lfloor\frac n 2\rfloor-1}\\
\geq & \left(|X|-\binom{i+1}{2}s(2^{\frac n 4}+1)\right)(s+1)-s\cdot2^{\lfloor\frac n 2\rfloor-1}\\
\ge & \left(2^{\lfloor\frac n 2\rfloor-1}+1-\frac{i(i+1)}2(2^{\frac n 4}+1)s\right)(s+1)-s\cdot2^{\lfloor\frac n 2\rfloor-1}\\
=& s-\frac{i(i+1)}2 s^2(2^{\frac n 4}+1) + 2^{\lfloor\frac n 2\rfloor-1}+1-\frac{i(i+1)}2(2^{\frac n 4}+1)s\\
=& 2^{\lfloor\frac n 2\rfloor-1}-\frac{i(i+1)} 2 s(s+1)(2^{\frac n 4}+1)+s+1.
\end{aligned}
$$
Since $i\le n\slash 2$ and $s\le 2^{\lfloor\frac n 8\rfloor}\slash n$,
$$
\begin{aligned}
& \frac{i(i+1)} 2 s(s+1)(2^{\frac n 4}+1)\\
\le& \frac{n(n+2)} 8 s(s+1)(2^{\frac n 4}+1)\\
=&\frac 1 8 (2^{\lfloor\frac n 8\rfloor})(2^{\lfloor\frac n 8\rfloor}+o(2^{\frac n 8}))(2^{\frac n 4}+1)\\
\le & 2^{\frac n 2-3}+o(2^\frac n 2)< 2^{\lfloor\frac n 2\rfloor-1}.
\end{aligned}
$$
So $op(\mathcal A)>s\cdot2^{\lfloor\frac n 2\rfloor-1}$, contradicting to the hypothesis of Lemma~\ref{lemma_middlesize} again.
\end{proof}

By Claim~\ref{claim_small_degree_for_all_s}, (C4) holds for $i+1$. Thus we have proved that all conditions (C1)-(C4)  hold for $i+1$ if $i\leq n/2$.
So we can continue this induction until some $i<n\slash 2$ such that either $\mathcal A_i$ or $X_i$ is empty, or, until $i=\lfloor n\slash2\rfloor$. Finally, we claim that both $\mathcal A_{\lfloor n\slash2\rfloor}$ and $X_{\lfloor n\slash2\rfloor}$ are of large sizes (larger than any given constant),
but $\dim(W_{\mathcal A_{\lfloor n\slash2\rfloor}})=\dim(W_{\mathcal A'})+1-\lfloor n\slash2\rfloor\leq \lfloor n\slash2\rfloor+1-\lfloor n\slash2\rfloor=1$ by (C2), which is a contradiction. This means the original assumption ``there exists a vertex $x\in X$ with degree at most $s$ in $H_0[\mathcal A', X]$'' should not happen at the beginning, so Lemma~\ref{lemma_middlesize} is proved.

Now we show that $|\mathcal A_{\lfloor n\slash2\rfloor}|$ and $|X_{\lfloor n\slash2\rfloor}|$ are both large. %thus $\mathcal A_n$ and $X_n$ are both nonempty.
Remember that $N=2^{\lfloor n\slash 2\rfloor}+s$, $s\le 2^{\lfloor\frac n 8\rfloor}\slash n$, and $\frac N {s+2}\le \lceil\frac N {s+2}\rceil <|\mathcal A'|< 2^{\lfloor\frac n 2\rfloor-1}+s$. By (C1), $|\mathcal A_{\lfloor n\slash2\rfloor}|\ge |\mathcal A'|-ns\slash 2> \frac N {s+2}-ns\slash 2$. This leads to $|\mathcal A_{\lfloor n\slash2\rfloor}|\ge \frac N {s+2}(1-o(1))>4$, since $$\frac N {s+2}\ge\frac{2^{\lfloor\frac n 2\rfloor}+s}{2^{\lfloor\frac n 8\rfloor}\slash n+2}=\Theta(n\cdot 2^{\frac 3 8 n}), \text{ but } ns=O(2^\frac n 8).$$
By (C3) we have $|X_{\lfloor n\slash2\rfloor}|\ge |X|-n^2s(2^{\frac n 4}+1)$. Since  $|X|= |\mathcal A|- |\mathcal A'| > 2^{\lfloor\frac n 2\rfloor-1}$, $|X_{\lfloor n\slash2\rfloor}|\ge 2^{\lfloor\frac n 2\rfloor-1} -O(n\cdot 2^{\frac 3 8 n})=2^{\lfloor\frac n 2\rfloor-1}(1-o(1))$. So both $|\mathcal A_{\lfloor n\slash2\rfloor}|$ and $|X_{\lfloor n\slash2\rfloor}|$ are large when $n$ is large.

%This means the recursion step can reach $i=n$. Consider $\mathcal A(n)$ and $X(n)$. On one side, by our analysis above, $|\mathcal A(n)|\ge \frac N {s+2}(1-o(1))$. So $|\mathcal A(n)|>2$ for large enough $n$. On the other side, by (O1), $dimspan(\mathcal A(n))\le dimspan(\mathcal A')+1-n\le n+1-n =1$, contradicting to  $|\mathcal A(n)|>2$. So such a recursion process should not happen at the beginning, which means the contradictory hypothesis that there exists some vertex in $X$ with degree less than $s+1$ fails. Finally, this proves Claim~\ref{claim_key_eventown}., Conjecture~\ref{conj_odd_town} and Conjecture~\ref{conj_even_town}

\subsection{Asymptotic result}
This subsection is devoted to prove Theorem~\ref{thm_eventownlar} and Theorem~\ref{thm_complete_old_eventown} by using Fourier analysis.
Consider the characters on additive abelian group $\mathbb F_2^n$. For any $\bm m\in \mathbb F_2^n$, define $\chi_{\bm m}: \mathbb F_2^n\to\mathbb C^*$ as $\chi_{\bm m}(a)=e^{\pi i(\bm m, \bm a)}=(-1)^{(\bm m, \bm a)}$ for any $\bm a\in \mathbb F_2^n$. It is easy to check that for all $\bm a, \bm b\in \mathbb F_2^n$, $\chi_{\bm m}(\bm a+\bm b)=\chi_{\bm m}(\bm a)\chi_{\bm m}(\bm b)$, so $\chi_{\bm m}\in\widehat{\mathbb F_2^n}$.
Moreover, for any $\bm m\ne \bm m'\in \mathbb F_2^n$, we have $\chi_{\bm m}\ne\chi_{\bm m'}$. The reason is that there always exists some $\bm a\in \mathbb F_2^n$ such that $(\bm a, \bm m-\bm m')=1$, which leads to $\chi_{\bm m}(\bm a)=(-1)^{(\bm m, \bm a)}\ne (-1)^{(\bm m', \bm a)}= \chi_{\bm m'}(\bm a).$ From $\{\chi_{\bm m}: \bm m\in \mathbb F_2^n\}\subseteq\widehat {\mathbb F_2^n}$ and $|\{\chi_{\bm m}: \bm m\in \mathbb F_2^n\}|=|\mathbb F_2^n|=|\widehat {\mathbb F_2^n}|$, we have $\widehat {\mathbb F_2^n}= \{\chi_{\bm m}: \bm m\in \mathbb F_2^n\}$.

For any subfamily $\mathcal A\subset 2^{[n]}$, consider its odd pair graph $H=H(\mathcal A)$, which is an induced subgraph of $H_0$. Denote $V(H)$ as the vertex set of $H$.
We have the following concentration result for the edge number of $H$.

\begin{lemma}\label{lemma_concentration_result}
Let $H=H(\mathcal A)$ for some subfamily $\mathcal A\subset 2^{[n]}$. Let $v_o(H)$ be the number of vertices in $H$ which are odd-sized  elements in $\mathcal A$. Then we have
$$|v(H)^2-4e(H)-2v_o(H)|\le 2^{n\slash 2}v(H).$$
In particular, when $\mathcal A$ is even-sized,
$$|v(H)^2-4e(H)|\le 2^{n\slash 2}v(H).$$
\end{lemma}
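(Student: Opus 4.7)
The plan is to rewrite $v(H)^2-4e(H)-2v_o(H)$ as a single character sum over $\mathbb F_2^n$ and then bound that sum by Cauchy--Schwarz together with Plancherel's formula applied to the indicator function of $V(H)$.

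\textbf{Step 1: Express the quantity as a character sum.} I will count ordered pairs $(\bm a,\bm b)\in V(H)\times V(H)$. Writing $\mathbbm 1[(\bm a,\bm b)=1]=(1-(-1)^{(\bm a,\bm b)})/2$, the sum of this indicator over all ordered pairs is $2e(H)+v_o(H)$, because off-diagonal ordered pairs $(\bm a,\bm b)$ with $\bm a\neq \bm b$ and $(\bm a,\bm b)=1$ contribute $2e(H)$, while the diagonal $\bm a=\bm b$ contributes $1$ exactly when $(\bm a,\bm a)=|A|$ is odd, i.e.\ $v_o(H)$ times in total. Rearranging,
\begin{equation*}
S\;:=\;\sum_{\bm a,\bm b\in V(H)}(-1)^{(\bm a,\bm b)}\;=\;v(H)^2-2\bigl(2e(H)+v_o(H)\bigr),
\end{equation*}
so it suffices to show $|S|\le 2^{n/2}v(H)$.

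\textbf{Step 2: Fourier-analytic reformulation.} Take $G=\mathbb F_2^n$, let $f=\mathbbm 1_{V(H)}\in L(G)$, and recall $\chi_{\bm a}(\bm b)=(-1)^{(\bm a,\bm b)}$. Define
\begin{equation*}
g(\bm a)\;:=\;\sum_{\bm b\in V(H)}(-1)^{(\bm a,\bm b)}\;=\;\sum_{\bm b\in G} f(\bm b)\,\overline{\chi_{\bm a}(\bm b)}\;=\;2^n\,\hat f(\chi_{\bm a}).
\end{equation*}
Then $S=\sum_{\bm a\in V(H)}g(\bm a)$, and I will estimate this by ignoring the restriction $\bm a\in V(H)$ in the $L^2$ norm of $g$.

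\textbf{Step 3: Cauchy--Schwarz and Plancherel.} Applying Cauchy--Schwarz and then extending the sum of $g^2$ from $V(H)$ to all of $G$,
\begin{equation*}
|S|\;\le\;\sqrt{v(H)}\,\Bigl(\sum_{\bm a\in G}g(\bm a)^2\Bigr)^{1/2}\;=\;\sqrt{v(H)}\,\Bigl(2^{2n}\sum_{\chi\in\widehat G}|\hat f(\chi)|^2\Bigr)^{1/2}.
\end{equation*}
Plancherel's identity gives $\sum_\chi|\hat f(\chi)|^2=\|f\|^2=\frac{1}{2^n}\sum_{\bm x}f(\bm x)^2=\frac{v(H)}{2^n}$, so the right-hand side equals $\sqrt{v(H)}\cdot\sqrt{2^n v(H)}=2^{n/2}v(H)$, finishing the bound. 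The even-sized special case is immediate since then $v_o(H)=0$.

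The only genuinely delicate point is the normalization bookkeeping in Step~2: the paper's convention puts the factor $1/|G|$ inside the Fourier transform, which forces the factor $2^n$ in the identity $g(\bm a)=2^n\hat f(\chi_{\bm a})$ and thus the factor $2^{2n}$ before invoking Plancherel. Once the constants are tracked carefully, the rest is routine Cauchy--Schwarz.
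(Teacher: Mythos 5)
Your proposal is correct and is essentially the paper's own argument: both identify $v(H)^2-4e(H)-2v_o(H)$ with the character sum $\sum_{\bm a\in V(H)}2^n\hat f(\chi_{\bm a})$ for $f=\mathbbm 1_{V(H)}$ (the paper writes this pointwise as $v(H)-2d_H(\bm m)-2(\bm 1,\bm m)$) and then combine Cauchy--Schwarz with Plancherel, dropping the terms outside $V(H)$. Your normalization bookkeeping is accurate, so nothing further is needed.
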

\begin{proof}
Define $f: \mathbb F_2^n\to\mathbb C$ as the indicator function of $V(H)$. In other words, $f(\bm a)=1$ if $\bm a$ is a vertex of $H$, and $f(\bm a)=0$ otherwise. Then for any $\bm m\in V(H)$,
$$
\begin{aligned}
\hat f(\chi_{\bm m})&=\frac 1{|\mathbb F_2^n|}\sum_{\bm a\in \mathbb F_2^n}f(\bm a)\overline{\chi_{\bm m}(\bm a)}\\
&=\frac 1 {2^n}\sum_{\bm a\in\mathbb F_2^n} f(\bm a)(-1)^{(\bm m, \bm a)}\\
&=\frac 1 {2^n}\sum_{\bm a\in V(H)} (-1)^{(\bm m, \bm a)}.
%&=\frac 1 {2^n}[(-1)d_{H}(\bm m)+ 1\cdot(v(H)-d_{H}(\bm m))]\\
%&=\frac 1 {2^n}(v(H)-2d_{H}(\bm m)).
\end{aligned}
$$
For $\bm a\ne \bm m$, $(\bm a, \bm m)=1$ if and only if $\{\bm a, \bm m\}$ forms an edge in $H$. For $\bm a=\bm m$, $(\bm m, \bm m)=1$ if and only if $(\bm 1, \bm m)=1$, where $\bm 1$ means the all-one vector. As a result, if $\bm m$ is odd-sized, $\hat f(\chi_{\bm m})= \frac 1 {2^n}[(-1)(d_{H}(\bm m)+1)+ 1\cdot(v(H)-d_{H}(\bm m)-1)]= \frac 1 {2^n}(v(H)-2d_{H}(\bm m)-2)$; if $\bm m$ is even-sized, $\hat f(\chi_{\bm m})= \frac 1 {2^n}[(-1)d_{H}(\bm m)+ 1\cdot(v(H)-d_{H}(\bm m))]= \frac 1 {2^n}(v(H)-2d_{H}(\bm m))$. To sum up, for any $\bm m\in v(H)$, \[\hat f(\chi_{\bm m})= \frac 1 {2^n}(v(H)-2d_{H}(\bm m)-2(\bm 1, \bm m)).\]

By Plancherel's formula,
$$\frac 1 {2^n}v(H)=\Vert f\Vert^2=\sum_{\bm m\in\mathbb F_2^n}|\hat f(\chi_{\bm m})|^2\ge \sum_{\bm m\in V(H)}|\hat f(\chi_{\bm m})|^2.$$
By Cauchy-Schwarz inequality,
\begin{equation*}
		 \begin{split}
		 \sum_{\bm m\in V(H)} \left| \hat{f}(\chi_{\bm m})\right|^2 &= \sum_{\bm m\in V(H)} \left( \frac{1}{2^n} \big( v(H)-2d_{H}(\bm m)-2(\bm 1, \bm m)\big) \right)^2 \\
		                &= \frac{1}{2^{2n}} \sum_{\bm m\in V(H)} \big( v(H)-2d_{H}(\bm m)-2(\bm 1, \bm m)\big)^2\\
		                &\ge \frac{1}{2^{2n}} \cdot \frac{1}{v(H)} \left( \sum_{\bm m\in V(H)} \big( v(H)-2d_{H}(\bm m)-2(\bm 1, \bm m)\big) \right)^2\\
		                &= \frac{1}{2^{2n}} \cdot \frac{1}{v(H)} \left( v(H)^2-4e(H)-2v_o(H) \right)^2.
		 \end{split}
		 \end{equation*}
Combining both inequalities above, $$\frac1{2^n}v(H)\ge \frac1{2^{2n}}\frac1{v(H)}(v(H)^2-4e(H)-2v_o(H))^2,$$ and hence $|v(H)^2-4e(H)-2v_o(H)|\le 2^\frac n 2v(H)$.
\end{proof}

$~$

\begin{proof}[Proof of Theorem~\ref{thm_eventownlar}]
Since $n\ge 1\slash\epsilon$, $2^{(1-\epsilon)n}\le 2^{n-1}$. There exists some even-sized $\mathcal A\subset 2^{[n]}$ such that $|\mathcal A|\ge 2^{(1-\epsilon)n}$.
Let $H=H(\mathcal A)$ be the odd pair graph of any  such $\mathcal A$. %By the demand of $\mathcal A$ in Theorem~\ref{thm_eventownlar}, $v(H)=|\mathcal A|\ge 2^{(1-\epsilon)n}$.
From Lemma~\ref{lemma_concentration_result}, we have
$${e(H)}\slash{\binom {v(H)} 2}\ge \frac{v(H)^2-2^{n\slash 2}v(H)}{2v(H)(v(H)-1)}\ge \frac12(1-2^{n\slash2}v(H)^{-1})\ge \frac 1 2(1-2^{(\epsilon-1\slash2)n}).$$
Hence, $f_n(\epsilon)=\min\{\frac{e(H(\mathcal A))}{\binom{|\mathcal A|} 2}: \text{ even-sized }\mathcal A\subset2^{[n]}; |\mathcal A|\ge 2^{(1-\epsilon)n}\}\ge \frac 1 2(1-2^{(\epsilon-1\slash2)n}).$

For the upper bound, let $\mathcal A$ be the collection of all even-sized sets from $2^{[n]}$. Then $|\mathcal A|=2^{n-1}\ge 2^{(1-\epsilon)n}$, and $V(H)$ forms a subspace of $\mathbb F_2^n$.
For any $\bm v\in V(H)$, it is clear that $d_H(\bm v)=0$ if and only if one of the following two cases happens:
\begin{itemize}
\item[(1)] $\bm v=\bm 0$;
\item[(2)] $\bm v=\bm 1$ and $n$ is even.
\end{itemize}
 Otherwise, $d_H(\bm v)=v(H)-|V(H)\cap {\bm v}^\perp|= 2^{n-1}-2^{n-2}=2^{n-2}$.

Thus, when $n$ is odd, $e(H)=\frac12\sum_{\bm v\in V(H)\backslash\{\bm 0\}}2^{n-2}=\frac12\binom{v(H)}2$; when $n$ is even, $e(H)=\frac12\sum_{\bm v\in V(H)\backslash\{\bm 0, \bm 1\}}2^{n-2}=v(H)(v(H)-2)\slash4$.
%For any nonzero $\bm v$ as a vertex of $H$, it is clear that $d_{H}(\bm v)=v(H)-|V(H)\cap {\bm v}^\perp|\le 2^{n-1}-2^{n-2}=2^{n-2}$, and $e(H)=\frac12\sum_{\bm v\in V(H)\backslash\{\bm 0\}}d_{H}(\bm v) \le\frac 12(|\mathcal A|-1)2^{n-2}=\frac12\binom{v(H)}2.$ Thus $f_n(\epsilon)\le\frac12.$
\end{proof}

$~$

When $n$ is odd, we can further improve Lemma~\ref{lemma_concentration_result} by considering the Fourier analysis on finite additive abelian subgroup $\bm 1^\perp\subset \mathbb F_2^n$. Note that $\bm 1^\perp$ is a subspace of $\mathbb F_2^n$ with dimension $n-1$ consisting of all vectors with even numbers of $1$, and hence the additive abelian subgroup structure follows naturally from the subspace structure.  For any $\bm m\in\bm 1^\perp$, consider $\chi_{\bm m}'$ as the restriction of $\chi_{\bm m}$ under $\bm 1^\perp$, i.e., it maps $\bm a$ to $\chi_{\bm m}(\bm a)=(-1)^{(\bm m, \bm a)}$ for any $\bm a\in \bm 1^\perp$. From $\chi_{\bm m}\in\widehat{\mathbb F_2^n}$, $\chi_{\bm m}'\in\widehat{\bm 1^\perp}$. Moreover, we claim that $\chi_{\bm m_1}'= \chi_{\bm m_2}'$ if and only if $\bm m_1=\bm m_2$ with $\bm m_1, \bm m_2\in \bm 1^\perp$. In fact, the equality $\chi_{\bm m_1}'= \chi_{\bm m_2}'$ means $(\bm m_1-\bm m_2, \bm a)=(\bm m_1, \bm a)-(\bm m_2, \bm a)=0$ for any $\bm a\in\bm 1^\perp$, which means $\bm m_1-\bm m_2\in(\bm 1^\perp)^\perp=span\{\bm 1\}$. If $\bm m_1\ne\bm m_2$, the only choice is $\bm m_1-\bm m_2=\bm 1$, which contradicts to both $\bm m_1, \bm m_2\in\bm 1^\perp$ for $n$ odd. Hence from the same analysis as in $ \widehat{\mathbb F_2^n}$, $\widehat {\bm 1^\perp}= \{\chi_{\bm m}': \bm m\in \bm 1^\perp\}$.

\begin{lemma}\label{lemma_concentration_result_odd}
Let $H=H(\mathcal A)$ for some subfamily $\mathcal A\subset 2^{[n]}$. When $n$ is odd and $\mathcal A$ is even-sized, we  have
$$|v(H)^2-4e(H)|\le 2^{(n-1)\slash 2}v(H).$$
\end{lemma}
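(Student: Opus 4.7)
The plan is to mirror the proof of Lemma~\ref{lemma_concentration_result} but carry out the Fourier analysis on the smaller additive abelian group $\bm 1^\perp \subset \mathbb F_2^n$ rather than on $\mathbb F_2^n$ itself. Since $\mathcal A$ is even-sized, every characteristic vector $\bm v_A$ lies in $\bm 1^\perp$, so $V(H)\subseteq \bm 1^\perp$ and it is natural to regard the indicator of $V(H)$ as a function on $\bm 1^\perp$. Replacing an ambient group of size $2^n$ by one of size $2^{n-1}$ is precisely what should sharpen the bound by a factor of $\sqrt{2}$.

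Concretely, define $f:\bm 1^\perp\to\mathbb C$ as the indicator of $V(H)$ and compute the Fourier coefficients $\hat f(\chi'_{\bm m})$ for $\bm m\in \bm 1^\perp$ in the same way as in Lemma~\ref{lemma_concentration_result}, by splitting off the contribution of $\bm a=\bm m$ from the rest of the sum $\frac{1}{2^{n-1}}\sum_{\bm a\in V(H)}(-1)^{(\bm m,\bm a)}$. The key simplification over the original proof is that for every $\bm m\in \bm 1^\perp$ the self inner product $(\bm m,\bm m)\equiv |\bm m|\equiv 0\pmod 2$ vanishes, so the correction term $-2(\bm 1,\bm m)$ that appeared in Lemma~\ref{lemma_concentration_result} disappears. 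This yields the cleaner identity $\hat f(\chi'_{\bm m})=\frac{1}{2^{n-1}}\bigl(v(H)-2d_H(\bm m)\bigr)$ for all $\bm m\in V(H)$, with no parity correction surviving.

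Now I invoke Plancherel's formula on $\bm 1^\perp$ to get $\|f\|^2=\sum_{\chi\in\widehat{\bm 1^\perp}}|\hat f(\chi)|^2$, restrict the right-hand sum to the characters indexed by $V(H)$, and apply the Cauchy-Schwarz inequality in the same form as in Lemma~\ref{lemma_concentration_result}, using $\sum_{\bm m\in V(H)}d_H(\bm m)=2e(H)$ to evaluate the inner linear sum. Chaining the two inequalities gives $\frac{v(H)}{2^{n-1}}\ge \frac{(v(H)^2-4e(H))^2}{2^{2(n-1)}\,v(H)}$, and clearing denominators produces $(v(H)^2-4e(H))^2\le 2^{n-1}v(H)^2$, which is the stated inequality.

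The one delicate point, and the reason the hypothesis ``$n$ odd'' is essential, is the step of restricting Plancherel from $\widehat{\bm 1^\perp}$ to the $|V(H)|$ characters $\{\chi'_{\bm m}:\bm m\in V(H)\}$: this is legitimate only when those characters are pairwise distinct. For odd $n$ the paragraph preceding the lemma establishes that $\bm m\mapsto \chi'_{\bm m}$ is a bijection $\bm 1^\perp\to\widehat{\bm 1^\perp}$, because $\bm 1\notin \bm 1^\perp$; this is exactly the obstruction that breaks for even $n$, since then $\bm m$ and $\bm m+\bm 1$ both lie in $\bm 1^\perp$ and give the same restricted character, so a single character would be double-counted on the right-hand side of Plancherel. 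Once this injectivity is granted, the remaining calculation is essentially identical to that of Lemma~\ref{lemma_concentration_result} with $2^n$ replaced by $2^{n-1}$ throughout.
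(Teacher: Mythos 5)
Your proposal is correct and matches the paper's proof essentially step for step: Fourier analysis on the subgroup $\bm 1^\perp$ of size $2^{n-1}$, the observation that $(\bm m,\bm m)=0$ for even-weight $\bm m$ kills the parity-correction term, then Plancherel restricted to the characters indexed by $V(H)$ followed by Cauchy--Schwarz. Your remark on why oddness of $n$ is needed (injectivity of $\bm m\mapsto\chi'_{\bm m}$ on $\bm 1^\perp$, which fails for even $n$ since $\bm 1\in\bm 1^\perp$) is exactly the point the paper establishes in the paragraph preceding the lemma.
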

\begin{proof}
Define $g: \bm 1^\perp\to\mathbb C$ as the indicator function of $V(H)$. Since $\mathcal A$ is even-sized, $V(H)\subseteq\bm 1^\perp$. For any $\bm m\in V(H)$,
$$
\begin{aligned}
\hat g(\chi_{\bm m}')&=\frac 1{|\bm 1^\perp|}\sum_{\bm a\in \bm 1^\perp}g(\bm a)\overline{\chi_{\bm m}'(\bm a)}\\
&=\frac 1 {2^{n-1}}\sum_{\bm a\in V(H)} (-1)^{(\bm m, \bm a)}\\
&=\frac 1 {2^{n-1}}[(-1)d_{H}(\bm m)+ 1\cdot(v(H)-d_{H}(\bm m))]\\
&=\frac 1 {2^{n-1}}(v(H)-2d_{H}(\bm m)).
\end{aligned}
$$
Plancherel's formula gives us that
$$\frac 1 {2^{n-1}}v(H)=\Vert g\Vert^2=\sum_{\bm m\in\bm 1^\perp}|\hat g(\chi_{\bm m}')|^2\ge \sum_{\bm m\in V(H)}|\hat g(\chi_{\bm m}')|^2.$$
By Cauchy-Schwarz inequality,
\begin{equation*}
		 \begin{split}
		 \sum_{\bm m\in V(H)} \left| \hat{g}(\chi_{\bm m}')\right|^2 &= \sum_{\bm m\in V(H)} \left( \frac{1}{2^{n-1}} \big( v(H)-2d_{H}(\bm m)\big) \right)^2 \\
		                &= \frac{1}{2^{2n-2}} \sum_{\bm m\in V(H)} \big( v(H)-2d_{H}(\bm m)\big)^2\\
		                &\ge \frac{1}{2^{2n-2}} \cdot \frac{1}{v(H)} \left( \sum_{\bm m\in V(H)} \big( v(H)-2d_{H}(\bm m)\big) \right)^2\\
		                &= \frac{1}{2^{2n-2}} \cdot \frac{1}{v(H)} \left( v(H)^2-4e(H) \right)^2.
		 \end{split}
		 \end{equation*}
Combining above two inequalities, we get $$\frac1{2^{n-1}}v(H)\ge \frac1{2^{2n-2}}\frac1{v(H)}(v(H)^2-4e(H))^2,$$ and hence $|v(H)^2-4e(H)|\le 2^\frac {n-1} 2v(H)$.
\end{proof}

$~$

\begin{proof}[Proof of Theorem~\ref{thm_complete_old_eventown}]
Let $n, s$ be positive integers. If $n$ is even, for any even-sized subfamily $\mathcal A\subset 2^{[n]}$ with $|\mathcal A|\ge 2^{\lfloor n\slash2\rfloor}+s=  2^{ n\slash2}+s$, let $H=H(\mathcal A)$ and by Lemma
\ref{lemma_concentration_result} we have
$$4e(H)\ge v(H)^2-2^{n\slash2}v(H)\ge s(2^\frac n 2+s)>s\cdot2^{\lfloor n\slash2\rfloor},$$
and hence $op(\mathcal A)=e(H)>s\cdot2^{\lfloor n\slash2\rfloor-2}$.

If $n$ is odd, for any even-sized subfamily $\mathcal A\subset 2^{[n]}$ with $|\mathcal A|\ge 2^{\lfloor n\slash2\rfloor}+s= 2^{(n-1)\slash2}+s$, let $H=H(\mathcal A)$ and by  Lemma
\ref{lemma_concentration_result_odd},
$$4e(H)\ge v(H)^2-2^{{(n-1)}\slash2}v(H)\ge s(2^\frac {n-1} 2+s)>s\cdot2^{ (n-1)\slash2}.$$
Hence $op(\mathcal A)=e(H)>s\cdot2^{\lfloor n\slash2\rfloor-2}$.
\end{proof}

\section{Conclusion}\label{sec-conc}
We studied the supersaturation problems of oddtown and eventown. It is well known that the maximum size of an oddtown (resp. eventown) family $\mathcal A$ over an $n$ element set is at most $n$ (resp. $2^{\lfloor\frac n 2\rfloor}$). The supersaturation problem counts the number of pairs of subsets with odd-sized intersection in $\mathcal A$ if the size of $\mathcal A$ exceeds the corresponding extremal value.
O'Neill~\cite{o2021towards} initiated the study of this problem and gave two conjectures on the odd pair numbers for oddtown and eventown respectively, and a problem on the asymptotic supersaturation result for eventown. We disproved the conjecture for oddtown, and proved the conjecture for eventown partially when $n$ is large enough.

Asymptotic supersaturation results for the oddtown and eventown subfamilies are given, resulting in different formulas for the minimum odd pair numbers of $\mathcal A$ for different exceeding numbers $s$. {We also completed a result for eventown reaching half of the conjectured lower bound for general $n$ and $s$ proposed by Antipov et al. \cite{antipov2022lovasz}.} Methods like Fourier analysis and extremal graph theory are included. Here we list some open problems.
\begin{itemize}
\item For the supersaturation problem of oddtown family, when $s\le n-4$, the families reaching the tight bounds are not unique under the equivalence of permutation. It is interesting to determine all extremal structures under the equivalence of permutation.
\item In the supersaturation problem of oddtown family, no result about the exact value of the minimum odd pair number is known for $s>n-4$. We believe that the constraint $s\le n-4$ is best possible for the tightness of the bound $op(\mathcal A)\ge s+2$. It is interesting to find the exact values of minimum odd pair numbers for more $s$ systematically.
\item Our eventown supersaturation result Theorem~\ref{thm_for_eventown} only works for sufficiently large $n$, and we do not take efforts to determine the explicit lower bound. It is interesting to give a good explicit lower bound of $n$ such that Theorem~\ref{thm_for_eventown} is satisfied.
\item For Conjecture~\ref{conj_even_town}, we proved that  it is true when $s\leq 2^{\lfloor\frac n 8\rfloor}\slash n$, and left a large gap in the conjectured range $[2^{\lfloor\frac n 2\rfloor}-2^{\lfloor\frac n 4\rfloor}]$ of $s$. We are interested in how to further shrink this gap.
\item For the asymptotic supersaturation results, we have studied the following cases.
    \begin{itemize}
    \item[(1)] Oddtown family, and $\lim_{n\to\infty}s\slash n= c$ for some constant $c$.
    \item[(2)] Oddtown family, with $n=o(s)$ but $\log s= o(n)$.
    \item[(3)] Eventown family, and $s\ge 2^{(1-\epsilon)n}$ for some $\epsilon\in(0, \frac12)$.
    \end{itemize}
    Moreover, remind that Lemma~\ref{lemma_concentration_result} also works for odd-sized family. Consequently, by using the same analysis as in Case (3), one can prove that an odd-sized subfamily shares the same performance on the minimum odd pair number density when the exceeding number $s\ge 2^{(1-\epsilon)n}$ for $\epsilon\in(0, \frac12)$, i.e., $$\lim_{n\to\infty}\min\{\frac{e(H(\mathcal A))}{\binom{|\mathcal A|}2}:\text{ odd-sized }\mathcal A\subset 2^{[n]}; |\mathcal A|\ge 2^{(1-\epsilon)n}\}=\frac12.$$
    Note that for Cases (1) and (2), this minimum density limit is zero for odd-sized family. If $\lim_{n\to\infty}\frac{\log s}n=c$ for some constant $c<\frac12$, by Construction~\ref{cons_A_m_I} with some suitable $m$, we can also determine that the minimum density limit is zero.
    So it is interesting to determine the minimum density limit when $\lim_{n\to\infty}\frac{\log s}n=\frac12$ for odd-sized family.
     %Please give more results to cover the thresholds of $s$ not covered in the above analysis. Please give more accurate results on the minimum odd-pair numbers for the covered thresholds of $s$.
\item Theorem~\ref{thm_eventownlar} and Theorem~\ref{thm_complete_old_eventown} are proved by using Fourier analysis, which shows the priority of this method. We look forward to more new results in this area derived from Fourier analysis.
\end{itemize}

%For the eventown conjecture, we have a restriction on the number $s$ such that $|\mathcal A|$ exceeds its extremal value by $s$. We proved the conjecture only when $s\leq 2^{\lfloor\frac n 8\rfloor}\slash n$, and left a large gap in the conjectured range $[2^{\lfloor\frac n 2\rfloor}-2^{\lfloor\frac n 4\rfloor}]$ of $s$ in  Conjecture~\ref{conj_even_town}. We are interested in how to further shrink this gap.

%Moreover, our eventown supersaturation result only works for sufficiently large $n$, and we do not take efforts to determine the explicit lower bound. It is also interesting to give a good explicit lower bound of $n$ such that Theorem~\ref{thm_for_eventown} is satisfied.

\vskip 10pt
\bibliographystyle{IEEEtran}
\bibliography{OET}

% Generated by IEEEtran.bst, version: 1.13 (2008/09/30)
\begin{thebibliography}{10}
\providecommand{\url}[1]{#1}
\csname url@samestyle\endcsname
\providecommand{\newblock}{\relax}
\providecommand{\bibinfo}[2]{#2}
\providecommand{\BIBentrySTDinterwordspacing}{\spaceskip=0pt\relax}
\providecommand{\BIBentryALTinterwordstretchfactor}{4}
\providecommand{\BIBentryALTinterwordspacing}{\spaceskip=\fontdimen2\font plus
\BIBentryALTinterwordstretchfactor\fontdimen3\font minus
  \fontdimen4\font\relax}
\providecommand{\BIBforeignlanguage}[2]{{%
\expandafter\ifx\csname l@#1\endcsname\relax
\typeout{** WARNING: IEEEtran.bst: No hyphenation pattern has been}%
\typeout{** loaded for the language `#1'. Using the pattern for}%
\typeout{** the default language instead.}%
\else
\language=\csname l@#1\endcsname
\fi
#2}}
\providecommand{\BIBdecl}{\relax}
\BIBdecl

\bibitem{erdos1961intersection}
P.~Erd\H{o}s, C.~Ko, and R.~Rado, ``Intersection theorems for systems of finite
  sets,'' \emph{Quart. J. Math. Oxford Ser.(2)}, vol.~12, pp. 313--320, 1961.

\bibitem{sperner1928satz}
E.~Sperner, ``Ein satz {\"u}ber untermengen einer endlichen menge,''
  \emph{Mathematische Zeitschrift}, vol.~27, no.~1, pp. 544--548, 1928.

\bibitem{fisher1940174}
R.~A. Fisher, ``An examination of the different possible solutions of a problem
  in incompleteblocks.'' \emph{Ann. Eugenics}, vol.~10, pp. 52--75, 1940.

\bibitem{frankl2017stability}
P.~Frankl, ``A stability result for families with fixed diameter,''
  \emph{Combinatorics, Probability and Computing}, vol.~26, no.~4, pp.
  506--516, 2017.

\bibitem{gerbner2018extremal}
D.~Gerbner and B.~Patk{\'o}s, \emph{Extremal finite set theory}.\hskip 1em plus
  0.5em minus 0.4em\relax {D}iscrete {M}athematics and its {A}pplications
  ({B}oca {R}aton), CRC Press, Boca Raton, FL, 2019.

\bibitem{snevily1995generalization}
H.~S. Snevily, ``A generalization of the {R}ay-{C}haudhuri-{W}ilson theorem,''
  \emph{Journal of Combinatorial Designs}, vol.~3, no.~5, pp. 349--352, 1995.

\bibitem{kleitman1966combinatorial}
D.~J. Kleitman, ``On a combinatorial conjecture of {E}rd{\"o}s,'' \emph{Journal
  of Combinatorial Theory}, vol.~1, no.~2, pp. 209--214, 1966.

\bibitem{gao2022stability}
J.~Gao, H.~Liu, and Z.~Xu, ``Stability through a walk out of shadows,''
  \emph{arXiv preprint arXiv:2212.07821}, 2022.

\bibitem{berlekamp1969subsets}
E.~Berlekamp, ``On subsets with intersections of even cardinality,''
  \emph{Canadian Mathematical Bulletin}, vol.~12, no.~4, pp. 471--474, 1969.

\bibitem{graver1975boolean}
J.~E. Graver, ``Boolean designs and self-dual matroids,'' \emph{Linear algebra
  and its applications}, vol.~10, no.~2, pp. 111--128, 1975.

\bibitem{babai1988linear}
L.~Babai and P.~Frankl, \emph{Linear algebra methods in combinatorics}.\hskip
  1em plus 0.5em minus 0.4em\relax Preliminary Version 2, Department of
  Computer Science, The University of Chicago, 2020.

\bibitem{deza1983functions}
M.~Deza, P.~Frankl, and N.~M. Singhi, ``On functions of strength $t$,''
  \emph{Combinatorica}, vol.~3, no. 3-4, pp. 331--339, 1983.

\bibitem{o2022note}
J.~O'Neill and J.~Verstra{\"e}te, ``A note on $k$-wise oddtown problems,''
  \emph{Graphs and Combinatorics}, vol.~38, no.~3, p. 101, 2022.

\bibitem{vu1999extremal}
V.~H. Vu, ``Extremal set systems with weakly restricted intersections,''
  \emph{Combinatorica}, vol.~19, no.~4, pp. 567--588, 1999.

\bibitem{szabo2005exact}
T.~Szab{\'o} and V.~H. Vu, ``Exact $k$-wise intersection theorems,''
  \emph{Graphs and combinatorics}, vol.~21, no.~2, pp. 247--261, 2005.

\bibitem{FRANKL1983215}
P.~Frankl and A.~Odlyzko, ``On subsets with cardinalities of intersections
  divisible by a fixed integer,'' \emph{European Journal of Combinatorics},
  vol.~4, no.~3, pp. 215--220, 1983.

\bibitem{vu1997extremal}
V.~H. Vu, ``Extremal systems with upper-bounded odd intersections,''
  \emph{Graphs and Combinatorics}, vol.~13, no.~2, pp. 197--208, 1997.

\bibitem{sudakov2018two}
B.~Sudakov and P.~Vieira, ``Two remarks on eventown and oddtown problems,''
  \emph{SIAM Journal on Discrete Mathematics}, vol.~32, no.~1, pp. 280--295,
  2018.

\bibitem{o2021towards}
J.~O'Neill, ``A short note on supersaturation for oddtown and eventown,''
  \emph{Discrete Applied Mathematics}, vol. 334, pp. 63--67, 2023.

\bibitem{balogh2018kleitman}
J.~Balogh and A.~Z. Wagner, ``Kleitman's conjecture about families of given
  size minimizing the number of $k$-chains,'' \emph{Advances in Mathematics},
  vol. 330, pp. 229--252, 2018.

\bibitem{balogh2018structure}
J.~Balogh, H.~Liu, M.~Sharifzadeh, S.~Das, and T.~Tran, ``Structure and
  supersaturation for intersecting families,'' \emph{Electronic Journal of
  Combinatorics}, vol.~26, no.~2, pp. P2--34, 2019.

\bibitem{leader2003set}
B.~Bollob{\'a}s and I.~Leader, ``Set systems with few disjoint pairs,''
  \emph{Combinatorica}, vol.~23, no.~4, pp. 559--570, 2003.

\bibitem{das2015sperner}
S.~Das, W.~Gan, and B.~Sudakov, ``Sperner's theorem and a problem of
  {E}rd{\H{o}}s, {K}atona and {K}leitman,'' \emph{Combinatorics, Probability
  and Computing}, vol.~24, no.~4, pp. 585--608, 2015.

\bibitem{das2016minimum}
------, ``The minimum number of disjoint pairs in set systems and related
  problems,'' \emph{Combinatorica}, vol.~36, no.~6, pp. 623--660, 2016.

\bibitem{antipov2022lovasz}
M.~Antipov and D.~Cherkashin, ``Lov{\'a}sz theta approach to eventown
  problem,'' \emph{Linear Algebra and its Applications}, vol. 655, pp.
  302--313, 2022.

\bibitem{stein2011fourier}
E.~M. Stein and R.~Shakarchi, \emph{Fourier analysis: an introduction}.\hskip
  1em plus 0.5em minus 0.4em\relax Princeton University Press, 2011, vol.~1.

\bibitem{kurzweil2004theory}
H.~Kurzweil and B.~Stellmacher, \emph{The theory of finite groups: {A}n
  introduction}.\hskip 1em plus 0.5em minus 0.4em\relax Springer, 2004, vol.~1.

\bibitem{erdHos1983supersaturated}
P.~Erd{\H{o}}s and M.~Simonovits, ``Supersaturated graphs and hypergraphs,''
  \emph{Combinatorica}, vol.~3, no.~2, pp. 181--192, 1983.

\bibitem{turan1941extremalaufgabe}
P.~Tur{\'a}n, ``Eine extremalaufgabe aus der graphentheorie,'' \emph{Mat. Fiz.
  Lapok}, vol.~48, no. 436-452, p.~61, 1941.

\end{thebibliography}
\end{document}